\newcommand{\R}{\hbox{\rm I \kern -5pt R}}     % simbolo de Reales
\newcommand{\p} {\hbox{\rm I \kern -5pt P}}
\def\x        {\textit{\textbf{x}}}
\def\W        {\textit{\textbf{W}}}
\def\H        {{\boldsymbol H}}
\newtheorem{prop}{Proposition}[section]
\newtheorem{defi}[prop]{Definition}%[chapter]
\newtheorem{tma}[prop]{Theorem}%[chapter]
\newtheorem{cor}[prop]{Corollary}%[chapter]
\newtheorem{obs}[prop]{Remark}%[chapter]
\newtheorem{lem}[prop]{Lemma}%[chapter]
\begin{document}

\title{Unconditionally energy stable fully discrete schemes for a chemo-repulsion model}
\author{F.~Guillén-González\thanks{Dpto. Ecuaciones Diferenciales y An\'alisis Num\'erico and IMUS, 
Universidad de Sevilla, Facultad de Matemáticas, C/ Tarfia, S/N, 41012 Sevilla (SPAIN). Email: guillen@us.es, angeles@us.es},
M.~A.~Rodríguez-Bellido$^*$~and
 D.~A.~Rueda-Gómez$^*$\thanks{Escuela de Matemáticas, Universidad Industrial de Santander, A.A. 678, Bucaramanga (COLOMBIA). Email:  diaruego@uis.edu.co}}

\date{}
\maketitle

\begin{abstract}

This work is devoted to study unconditionally energy stable and mass-conservative numerical schemes for the following repulsive-productive chemotaxis model: Find $u \geq 0$, the cell density, and $v \geq 0$, the chemical concentration, such that
$$
\left\{
\begin{array}
[c]{lll}%
\partial_t u - \Delta u - \nabla\cdot (u\nabla v)=0 \ \ \mbox{in}\ \Omega,\ t>0,\\
\partial_t v - \Delta v + v =  u \ \ \mbox{in}\ \Omega,\ t>0,
\end{array}
\right.
$$
in a bounded domain $\Omega\subseteq \mathbb{R}^d$, $d=2,3$. By using a regula\-ri\-za\-tion technique, we propose three fully discrete Finite Element (FE) approximations. The first one is a non\-li\-near approximation in the variables $(u,v)$; the second one is another nonlinear approximation obtained by introducing ${\boldsymbol\sigma}=\nabla v$ as an auxiliary variable; and the third one is a linear approximation constructed by mixing the regularization procedure with the energy quadratization technique, in which other auxiliary variables are introduced. In addition, we study the well-posedness of the nume\-ri\-cal schemes, proving unconditional existence of solution, but conditional uniqueness (for the nonlinear schemes). Finally, we compare the behavior of  such schemes throughout several numerical simulations and provide some conclusions.
\end{abstract}

\noindent{\bf 2010 Mathematics Subject Classification.} 35K51, 35Q92, 65M12, 65M60, 92C17.

\noindent{\bf Keywords: } Chemorepulsion-production model, finite element approximation, unconditional energy-stability, quadratization of energy, regularization.

\section{Introduction}
Chemotaxis is a biological phenomenon in which the movement of living organisms is induced by a chemical stimulus. The chemotaxis is called attractive when the organisms move towards regions with higher chemical concentration, while if the motion is towards lower concentrations, the chemotaxis is called repulsive. In this paper, we study unconditionally energy stable fully discrete schemes for the following parabolic-parabolic repulsive-productive chemotaxis model (with linear production term):
\begin{equation}  \label{modelf00}
\left\{
\begin{array}
[c]{lll}%
\partial_t u - \Delta u = \nabla\cdot (u\nabla v)\ \ \mbox{in}\ \Omega,\ t>0,\\
\partial_t v - \Delta v + v =  u
 \ \mbox{in}\ \ \Omega,\ t>0,\\
\displaystyle\frac{\partial u}{\partial \mathbf{n}}=\frac{\partial v}{\partial \mathbf{n}}=0\ \ \mbox{on}\ \partial\Omega,\ t>0,\\
u(\x,0)=u_0(\x)\geq 0,\ v(\x,0)=v_0(
\x)\geq 0\ \ \mbox{in}\ \Omega,
\end{array}
\right. \end{equation}
in a bounded domain $\Omega\subseteq \mathbb{R}^d$, $d=2,3$, with boundary $\partial \Omega$. The unknowns for this model are $u(\x, t) \geq 0$, the cell density, and $v(\x, t) \geq 0$, the chemical concentration. Problem (\ref{modelf00}) is conservative in $u$, because the total mass $\int_\Omega u(\cdot,t)$ remains constant in time, as we can check  integrating equation (\ref{modelf00})$_1$ in $\Omega$, 
\begin{equation}\label{conservuInt}
\frac{d}{dt}\left(\int_\Omega u(\cdot,t)\right)=0, \ \ \mbox{ i.e. } \ 
\int_\Omega u(\cdot,t)=\int_\Omega u_0 := m_0, \ \ \forall t>0.
\end{equation}
Problem (\ref{modelf00}) is well-posed \cite{Cristian}: In 3D domains, there exist global in time nonnegative weak solutions of model (\ref{modelf00}) in the following sense:
\begin{equation*}
\begin{array}{ccc}
u\in C_w([0,T];L^1(\Omega)) \cap L^{5/4}(0,T;W^{1,5/4}(\Omega)),\ \ \forall T>0,\\
v\in L^\infty(0,T; H^1(\Omega)) \cap L^{2}(0,T;H^{2}(\Omega))\cap C([0,T]; L^2(\Omega)), \ \ \forall T>0,\\
\partial_t u\in L^{4/3}(0,T; W^{1,\infty}(\Omega)'),\ \ \partial_t v\in L^{5/3}(0,T; L^{5/3}(\Omega)), \ \ \forall T>0,
\end{array}
\end{equation*}
 satisfying the following variational formulation of the $u$-equation
\begin{equation*}
\int_0^T \langle \partial_t u,\bar{u}\rangle + \int_0^T (\nabla u,  \nabla \bar{u}) +\int_0^T (u\nabla v,\nabla \bar{u})=0, \ \ \forall \bar{u}\in L^4(0,T;W^{1,\infty}(\Omega)), \ \ \forall T>0,
\end{equation*}
and the $v$-equation pointwisely
\begin{equation*}
\partial_t v -\Delta v + v=u \ \ \mbox{ a.e. } (t,\x)\in (0,+\infty)\times\Omega.
\end{equation*}
Moreover, for 2D domains, there exists a unique classical and bounded in time solution. A key step of the existence proof in \cite{Cristian} is to establish an energy equality, which in a formal manner, is obtained as follows: if we consider 
\begin{equation*}
F(s):= s(ln s - 1)+1\geq 0 \ \Rightarrow F'(s)=ln\, s \ \Rightarrow F''(s)=s^{-1}, \ \ \forall s>0,
\end{equation*}
then multiplying (\ref{modelf00})$_1$ by $F'(u)$, (\ref{modelf00})$_2$ by $-\Delta v$, integrating over $\Omega$, using (\ref{modelf00})$_3$ and adding, the chemotactic and production terms cancel, and we obtain
\begin{eqnarray}\label{deluvintro}
\frac{d}{dt} \displaystyle \int_\Omega \Big( F(u) + \frac{1}{2} \vert \nabla {v}\vert^2\Big) d \x + \int_\Omega \Big(4\vert \nabla (\sqrt{u})\vert^2 +\vert \Delta v\vert^2 + \vert \nabla v\vert^2\Big) d \x = 0.
\end{eqnarray}
The aim of this work is to design numerical methods for model (\ref{modelf00}) conserving, at the discrete level, the mass-conservation and energy-stability properties of the continuous model (see (\ref{conservuInt})-(\ref{deluvintro}), respectively). There are only a few works about numerical analysis for chemotaxis models. For instance, for the Keller-Segel system (i.e.~with chemo-attraction and linear production),  Filbet studied in \cite{Filbet} the existence of discrete solutions and the convergence of a finite volume scheme. Saito, in \cite{Saito1,Saito2}, proved error estimates for a conservative Finite Element (FE) approximation.  A mixed FE approximation  is studied in \cite{Marrocco}. In \cite{Eps}, some error estimates are proved for a fully discrete discontinuous FE method. In the case where the chemotaxis occurs in heterogeneous medium, in \cite{CST} the convergence of a combined finite volume-nonconforming finite element scheme is studied, and some discrete properties are proved.\\

Some previous energy stable numerical schemes have also been studied in the chemotaxis framework. A finite volume scheme for a Keller-Segel model with an additional cross-diffusion term satisfying the energy-stablity property (that means, a discrete energy decreases in time) has been studied in \cite{BJ}. Unconditionally energy stable time-discrete numerical schemes and fully discrete FE schemes for a chemo-repulsion model with quadratic production has been analyzed in \cite{FMD,FMD2} respectively. However, as far as we know, for the chemo-repulsion model with linear production (\ref{modelf00}) there are not works studying energy-stable schemes. We emphasize that the numerical analysis of energy stability in the chemo-repulsion model with linear production has greater difficulties than the case of quadratic production \cite{FMD,FMD2}. In fact, in the continuous case of quadratic production, in order to obtain an energy equality, it is necessary to test the $u$-equation by $u$, and the $v$-equation by $-\Delta v$, which, if we want to move to the fully discrete approximation, is much easier than the case of linear production in which, as it was said before, the energy equality is obtained multiplying the $u$-equation by the nonlinear function $F'(u)=ln\, u$. \\

In this paper, we propose three unconditional energy stable fully discrete schemes, in which, in order to obtain rigorously a discrete version of the energy law (\ref{deluvintro}), we argue through a re\-gu\-larization technique. This regularization procedure has been used in previous works to deal with the test function $F'(u)=ln\, u$ in fully discrete approximations, as for example, for a cross-diffusion competitive population model \cite{BB}  or a cross-diffusion segregation problem arising from a model of interacting particles \cite{GS}. The model that will be analyzed in this paper differs primarily from these previous works in the fact that, in our case, the term of self-diffusion in (\ref{modelf00})$_1$ is $\nabla \cdot(\nabla u)$ and it is not in the form $\nabla \cdot(u\nabla u)$ as in \cite{BB,GS}, which makes the analysis a bit more difficult. In fact, in the continuous problem, if we multiply equation (\ref{modelf00})$_1$ by $F'(u)=ln\, u$, in our case we obtain the dissipative term $\int_\Omega \frac{1}{u}\vert \nabla u\vert^2$ (which does not provide an estimate for $\nabla u$), while in the cases of \cite{BB,GS}, it is obtained $\int_\Omega \vert \nabla u\vert^2$ which gives directly an estimate for $\nabla u$ in $L^2(\Omega)$.\\

The outline of this paper is as follows: In Section 2, we give the notation and define the regularized functions that will be used in the fully discrete approximations. In Section 3, we study a nonlinear fully discrete FE approximation of (\ref{modelf00}) in the original variables $(u,v)$. We prove the well-posedness of the numerical approximation, and show the mass-conservation and energy-stability properties of this scheme by imposing the orthogonality condition on the mesh (see ({\bf H}) below). In Section 4, we analyze another nonlinear FE approximation obtained by introducing ${\boldsymbol\sigma=\nabla v}$ as an auxiliary variable, and again, we prove the well-posedness of the scheme, as well as its mass-conservation and energy-stability properties, but without imposing the orthogonality condition ({\bf H}). In Section 5, we study a linear fully discrete FE approximation constructed by mixing the regularization procedure with the Energy Quadratization (EQ) strategy, in which the energy of the system is transformed into a quadratic form by introducing new auxiliary variables. This EQ technique has been applied to different fields such as liquid crystals \cite{BGJ,ZYGW}, phase fields \cite{ZYGZ} (and references therein) and molecular beam epitaxial growth \cite{YZW} models, among others. Finally, in Section 6, we compare the behavior of the schemes throughout several numerical simulations, and provide some conclusions in Section 7.

\section{Notation and preliminary results}\label{NRe}
First, we recall some functional spaces which will be used throughout this paper. We will consider the usual Sobolev spaces $H^m(\Omega)$ and Lebesgue spaces $L^p(\Omega),$
$1\leq p\leq \infty,$ with norms $\Vert\cdot\Vert_{m}$ and $\Vert\cdot \Vert_{L^p}$, respectively. In particular,  the $L^2(\Omega)$-norm will be denoted by $\Vert
\cdot\Vert_0$. Throughout $(\cdot,\cdot)$ denotes the standard $L^2$-inner product over $\Omega$. We denote by $\H^{1}_{\sigma}(\Omega):=\{{\boldsymbol\sigma}\in \H^{1}(\Omega): {\boldsymbol\sigma}\cdot \mathbf{n}=0 \mbox{ on } \partial\Omega\}$ and we will use the following equivalent norms in $H^1(\Omega)$  and ${\bf H}_{\sigma}^1(\Omega)$, respectively (see \cite{necas} and \cite[Corollary 3.5]{Nour}, respectively):
\begin{equation*}
\Vert u \Vert_{1}^2=\Vert \nabla u\Vert_{0}^2 + \left( \int_\Omega u\right)^2, \ \ \forall u\in H^1(\Omega),
\end{equation*}
\begin{equation*}
\Vert {\boldsymbol\sigma} \Vert_{1}^2=\Vert {\boldsymbol\sigma}\Vert_{0}^2 + \Vert \mbox{rot }{\boldsymbol\sigma}\Vert_0^2 + \Vert \nabla \cdot {\boldsymbol\sigma}\Vert_0^2, \ \ \forall {\boldsymbol\sigma}\in \H^{1}_{\sigma}(\Omega),
\end{equation*}
where rot ${\boldsymbol\sigma}$ denotes the well-known rotational operator (also called curl) which is scalar for 2D domains and vectorial for 3D ones. If $Z$ is a
general Banach space, its topological dual space will be denoted by $Z'$.
Moreover, the letters $C,K$ will denote different positive
constants which may change from line to line (or even within the same
line). \\
In order to construct energy-stable fully discrete schemes for problem (\ref{modelf00}), we are going to follow a regularization procedure. We will use the approach introduced by Barrett and Blowey \cite{BB}. Let $\varepsilon\in (0,1)$ and consider the truncated function $\lambda_\varepsilon:\mathbb{R}\rightarrow [\varepsilon,\varepsilon^{-1}]$ given by
\begin{equation}\label{aE}
\lambda_\varepsilon(s)\ := \ 
\left\{\begin{array}{lcl}
\varepsilon & \mbox{ if } & s\leq \varepsilon,\\
s & \mbox{ if } & \varepsilon\leq s\leq \varepsilon^{-1},\\
\varepsilon^{-1} & \mbox{ if } & s\geq \varepsilon^{-1}.
\end{array}\right. 
\end{equation}
If we define 
\begin{equation} \label{F2pE}
F''_\varepsilon(s):= \frac{1}{\lambda_\varepsilon(s)},
\end{equation}
then, we can integrate twice in (\ref{F2pE}), imposing the conditions $F'_\varepsilon(1)=F_\varepsilon(1)=0$, and we obtain a convex function $F_\varepsilon: \mathbb{R}\rightarrow [0,+\infty)$, such that $F_\varepsilon \in  C^{2,1}(\mathbb{R})$ (see Fig. \ref{fig:Fe}). Even more, for $\varepsilon\in (0,e^{-2})$, it holds \cite{BB}
\begin{equation}\label{PNa}
F_\varepsilon(s)\geq \frac{\varepsilon}{2}s^2 - 2\ \ \forall s\geq 0 \ \  \mbox{  and } \ \ F_\varepsilon(s)\geq \frac{s^2}{2\varepsilon} \ \ \forall s\leq 0.
\end{equation}
\begin{figure}[htbp] 
\centering 
\subfigure[$\lambda_\varepsilon(s)$ vs  $\frac{1}{F''(s)}:=s$]{\includegraphics[width=72mm]{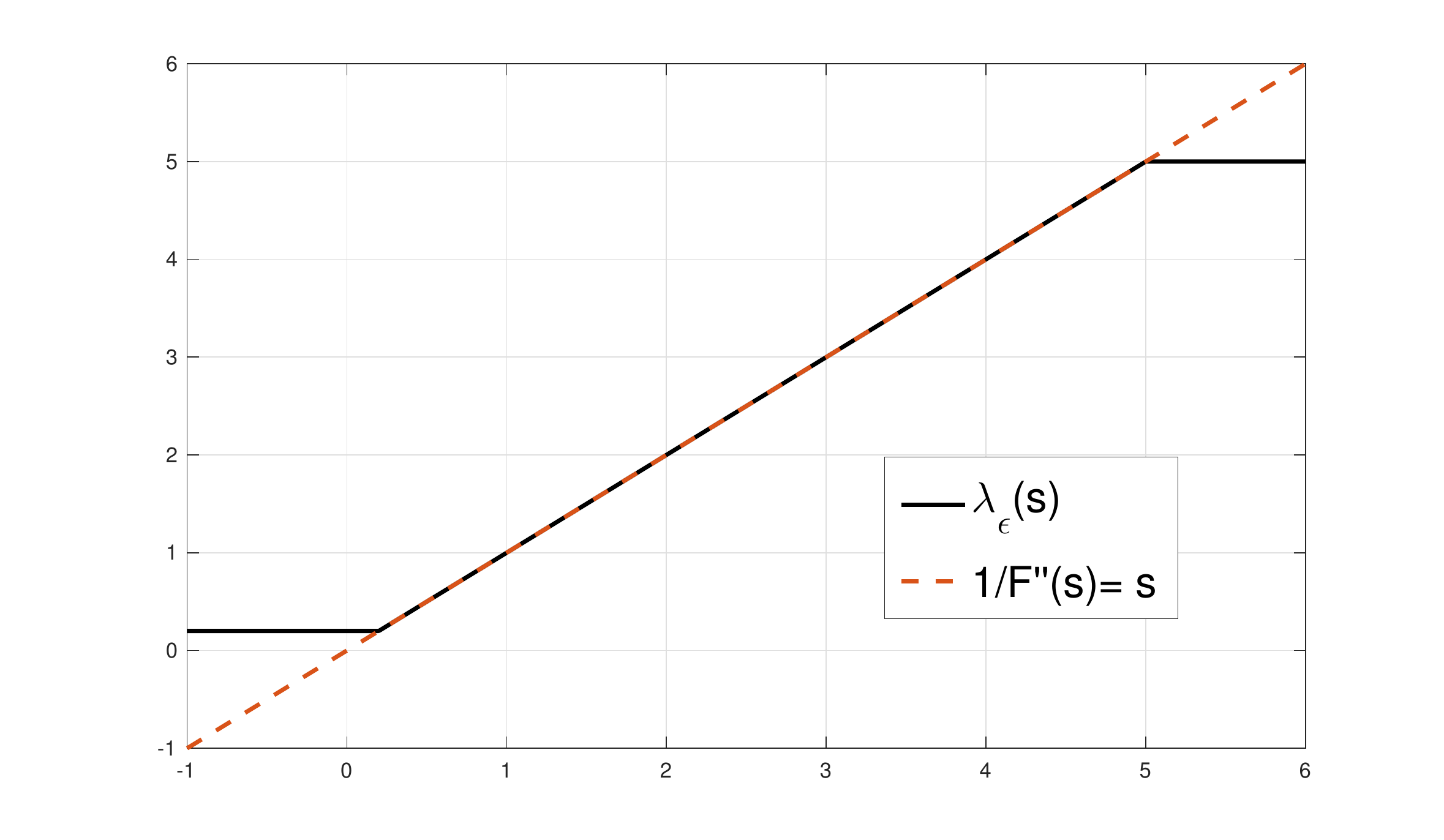}} 
\subfigure[$F''_\varepsilon(s)$ vs  $F''(s):= \frac{1}{s}$]{\includegraphics[width=72mm]{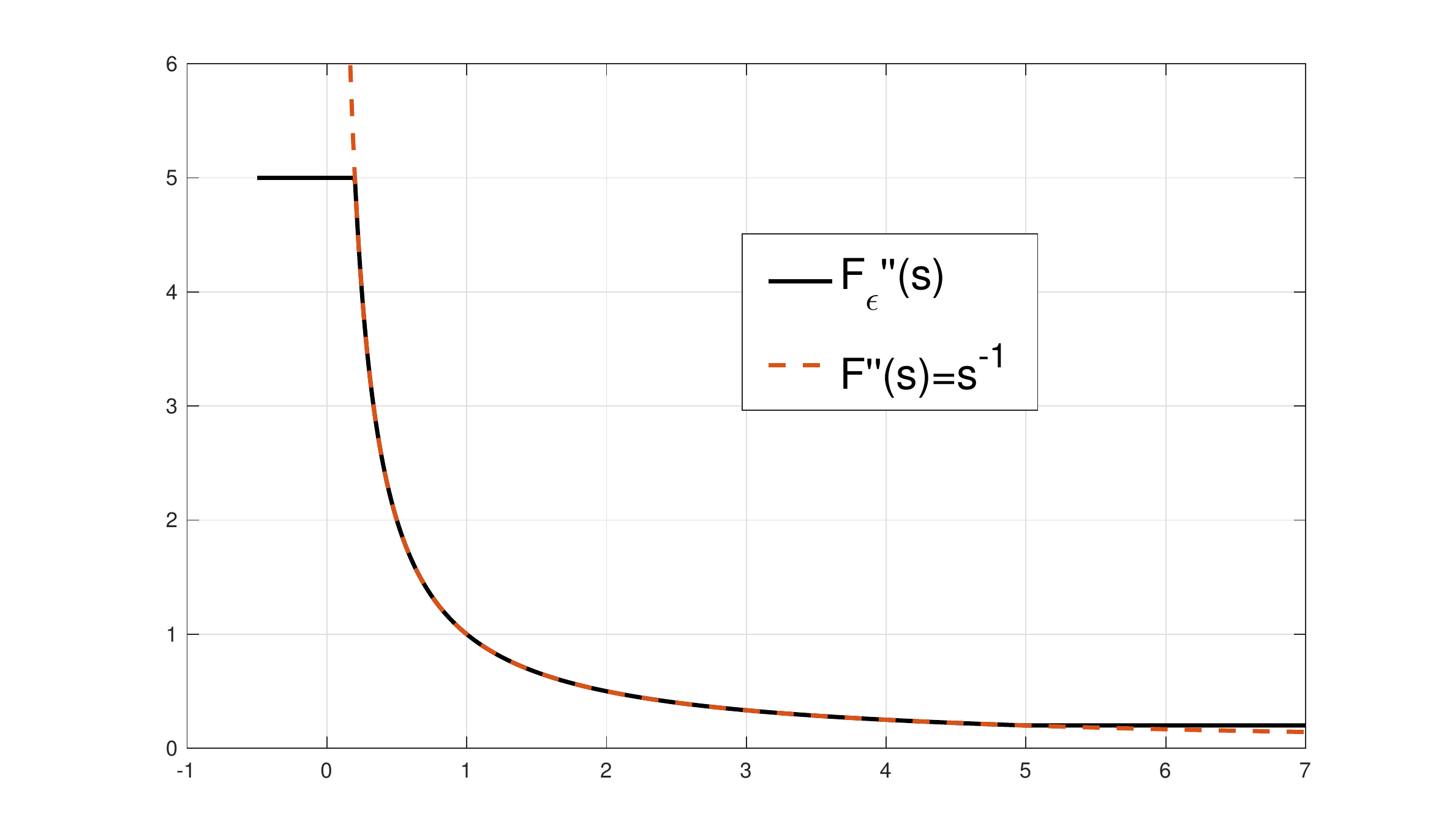}}
\subfigure[$F'_\varepsilon(s)$ vs  $F'(s):= ln\; s$]{\includegraphics[width=72mm]{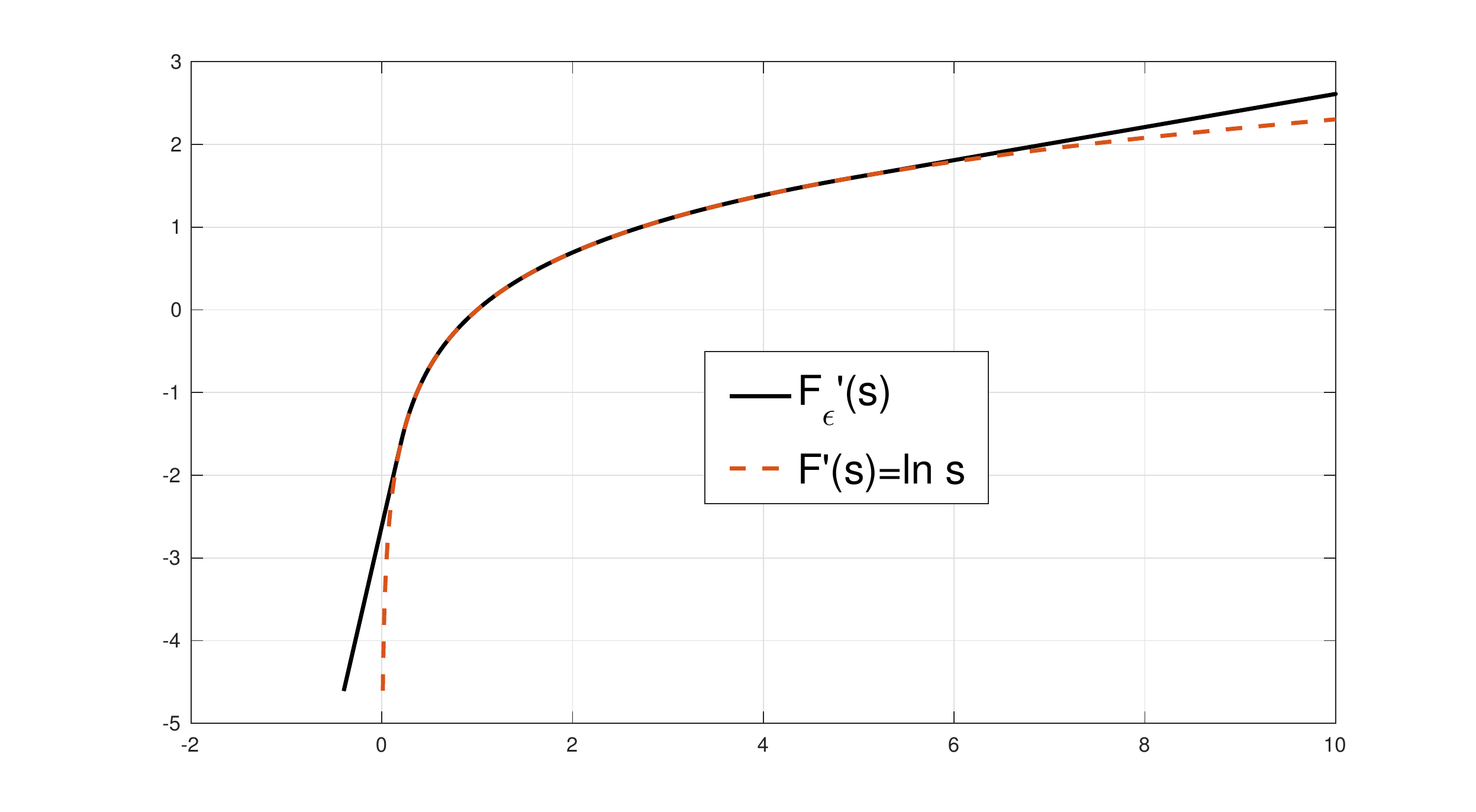}} 
\subfigure[$F_\varepsilon(s)$ vs  $F(s):= s(ln\; s -1)+1$]{\includegraphics[width=72mm]{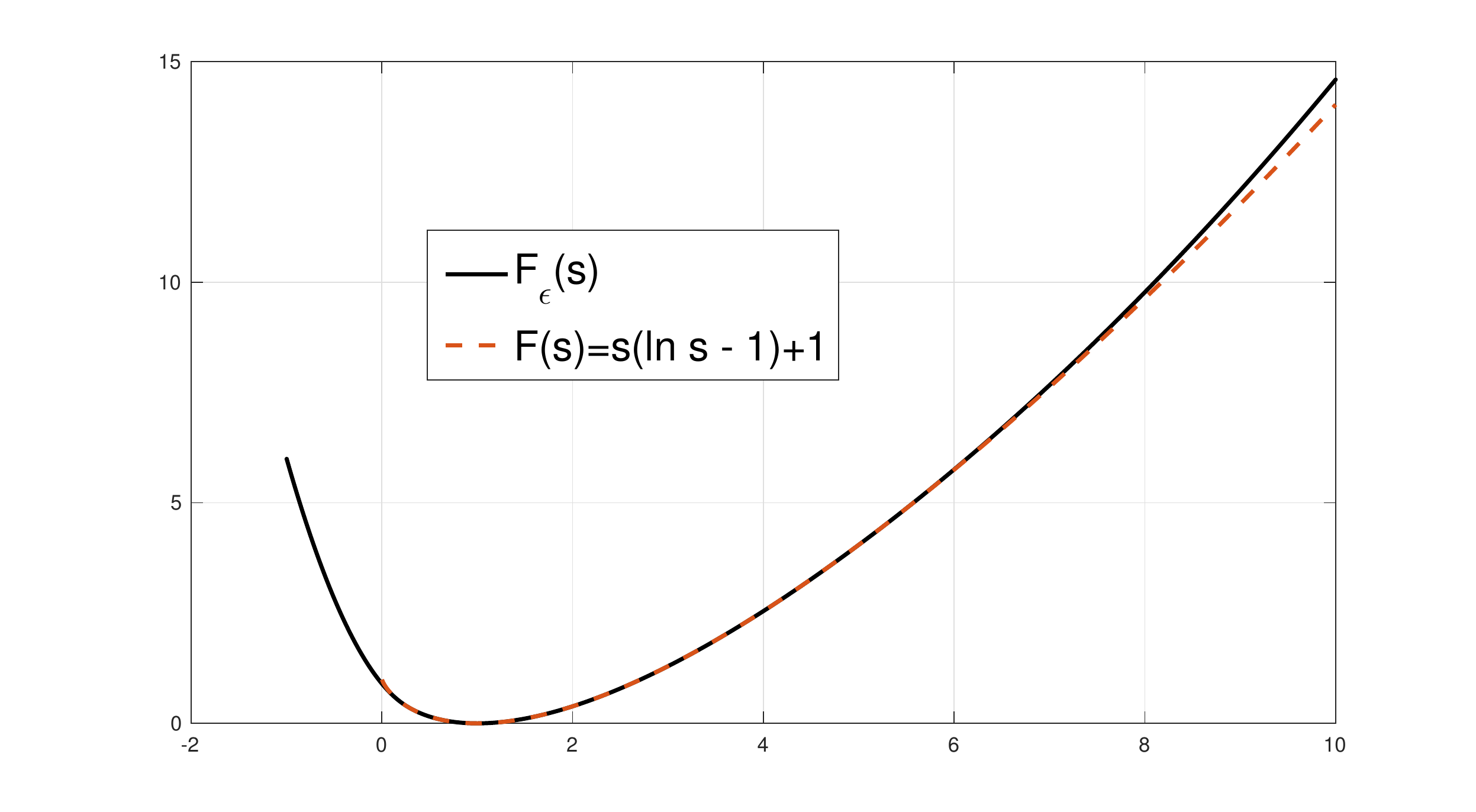}}
\caption{Functions $\lambda_\varepsilon$ and $F_\varepsilon$ and its derivatives.} \label{fig:Fe}
\end{figure}
Finally, we will use the following result to get large time estimates \cite{He}:
\begin{lem} \label{tmaD}
Assume that $\delta,\beta,k>0$ and $d^n\geq 0$ satisfy
\begin{equation*}
 (1+\delta k)d^{n+1} \leq d^n + k\beta, \ \ \forall n\geq 0.
\end{equation*}
Then, for any $n_0\geq 0$,
\begin{equation*}
d^n \leq (1+\delta k)^{-(n-n_0)} d^{n_0} + \delta^{-1} \beta, \ \  \forall n\geq n_0.
\end{equation*}
\end{lem}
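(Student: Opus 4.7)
The plan is to iterate the assumed inequality, viewing it as an affine contraction acting on the nonnegative sequence $(d^n)$. First I would rewrite the hypothesis in the equivalent form
\begin{equation*}
d^{n+1} \;\leq\; r\, d^n + c, \qquad r := (1+\delta k)^{-1} \in (0,1),\ \ c := \frac{k\beta}{1+\delta k};
\end{equation*}
the contraction factor $r<1$ is what will drive the exponential decay toward the limiting equilibrium constant, and the extraction of $c$ isolates the forcing term.

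Next, a routine induction on $n\geq n_0$ (the base case $n=n_0$ being trivial) produces
\begin{equation*}
d^n \;\leq\; r^{n-n_0}\, d^{n_0} + c\sum_{j=0}^{n-n_0-1} r^j.
\end{equation*}
The only arithmetic observation needed is the clean cancellation of the resulting geometric sum: since $1-r = \delta k/(1+\delta k)$, one has
\begin{equation*}
c\sum_{j=0}^{n-n_0-1} r^j \;=\; \frac{c}{1-r}\bigl(1 - r^{n-n_0}\bigr) \;=\; \frac{\beta}{\delta}\bigl(1 - r^{n-n_0}\bigr) \;\leq\; \delta^{-1}\beta,
\end{equation*}
where the last inequality uses $0<r<1$. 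Substituting $r = (1+\delta k)^{-1}$ back then yields exactly the stated bound $d^n \leq (1+\delta k)^{-(n-n_0)} d^{n_0} + \delta^{-1}\beta$.

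There is essentially no obstacle here; the result is a purely algebraic consequence of the recurrence. What is worth noting is that the cleanness of the limiting constant $\delta^{-1}\beta$, and in particular its independence of the time step $k$, is engineered by the precise placement of the factor $k$ multiplying $\beta$ on the right-hand side of the hypothesis: that factor cancels against the $1-r$ denominator produced by the geometric sum. The assumption $d^n \geq 0$ itself plays no direct role in the inequality chain, and is included only to make the statement meaningful as a stability estimate that will later be applied to discrete energies.
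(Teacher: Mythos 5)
Your argument is correct: the rewriting as $d^{n+1}\leq r\,d^n+c$ with $r=(1+\delta k)^{-1}$, the induction, and the cancellation $c/(1-r)=\beta/\delta$ in the geometric sum all check out, and your observation that the nonnegativity of $d^n$ is not actually used in the chain of inequalities is accurate. Note that the paper itself offers no proof of this lemma --- it is quoted as a known result from the reference of He and Li --- so there is nothing to compare against; your elementary derivation is exactly the standard one and fully justifies the statement as used here.
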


\section{Scheme UV}
In this section, we propose an energy-stable nonlinear fully discrete scheme (in the variables $(u,v)$) associated to model (\ref{modelf00}). With this aim, taking into account the functions $\lambda_\varepsilon$ and $F_\varepsilon$ and its derivatives, we consider the following regularized version of problem (\ref{modelf00}): Find $u_\varepsilon, v_\varepsilon:\Omega\times [0,T]\rightarrow \mathbb{R}$ such that
\begin{equation}
\left\{
\begin{array}
[c]{lll}%
\partial_t u_\varepsilon -\Delta u_\varepsilon -\nabla\cdot(\lambda_\varepsilon (u_\varepsilon)\nabla {v}_\varepsilon)=0\ \ \mbox{in}\ \Omega,\ t>0,\\
\partial_t {v}_\varepsilon -\Delta v_\varepsilon + {v}_\varepsilon = u_\varepsilon\ \ \mbox{in}\ \Omega,\ t>0,\\
\displaystyle \frac{\partial u_\varepsilon}{\partial \mathbf{n}}=\frac{\partial v_\varepsilon}{\partial \mathbf{n}}=0\ \ \mbox{on}\ \partial\Omega,\ t>0,\\
u_\varepsilon(\x,0)=u_0(\x)\geq 0,\ v_\varepsilon(\x,0)=v_0(
\x)\geq 0\ \ \mbox{in}\ \Omega.
\end{array}
\right.  \label{modelf02acont}
\end{equation}
\begin{obs}
The idea is to define a fully discrete scheme associated to (\ref{modelf02acont}), taking in general $\varepsilon=\varepsilon(k,h)$, such that $\varepsilon(k,h)\rightarrow 0$ as $(k,h)\rightarrow 0$.
\end{obs}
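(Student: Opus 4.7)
The final statement is a Remark rather than a formal theorem, so what needs to be ``proved'' is really that the informal design principle it articulates can be made rigorous. The content is twofold: (a) for each fixed $\varepsilon \in (0,1)$ one can build a fully discrete FE scheme for the regularized problem (\ref{modelf02acont}); and (b) the regularization parameter can be coupled to the discretization parameters via some function $\varepsilon = \varepsilon(k,h)$ with $\varepsilon(k,h)\to 0$ as $(k,h)\to 0$, so that the scheme formally approximates the original problem (\ref{modelf00}) in the limit.

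The plan to make this precise is as follows. First, I would fix $\varepsilon>0$ and write a backward-Euler FE discretization of (\ref{modelf02acont}) in conforming finite element spaces for $(u_h,v_h)$, producing the sequence $(\ud,\v_h^n)$ (in fact the paper proposes three different such discretizations in Sections 3--5, but the structural argument is the same). Second, using the properties of $\lambda_\varepsilon$ and the convex potential $F_\varepsilon$ recalled in (\ref{aE})--(\ref{PNa}), I would derive a discrete analogue of the energy law (\ref{deluvintro}) by testing the $u$-equation with $F'_\varepsilon(\ud)$, which is now a bounded Lipschitz function and therefore an admissible finite element test function at the fully discrete level; testing the $v$-equation with $-\Delta \v_h^n$ (or with an auxiliary variable, depending on the scheme) produces the cancellation of chemotactic and production terms as in the continuous derivation. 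A discrete mass-conservation identity of the form (\ref{conservuInt}) follows from testing the $u$-equation with the constant $1$. Third, I would specify the coupling $\varepsilon = \varepsilon(k,h)$ concretely (e.g.\ a power law $\varepsilon=(k+h)^{\gamma}$ for a suitable $\gamma>0$) so that: (i) the $\varepsilon$-dependent stability bounds remain uniform in $(k,h)$; (ii) the consistency error created by replacing $u$ by $\lambda_\varepsilon(u)$ vanishes as $(k,h)\to 0$; and (iii) the discrete solvability argument (contraction/fixed point in the nonlinear schemes, linear solve in the linear one) survives when $\varepsilon\to 0$.

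The main obstacle, anticipated in the introduction, is reconciling (i) with (iii). Testing the $u$-equation of (\ref{modelf02acont}) with $F'_\varepsilon(u_\varepsilon)$ yields the degenerate dissipation $\int_\Omega |\nabla u_\varepsilon|^2/\lambda_\varepsilon(u_\varepsilon)$, which does not control $\nabla u_\varepsilon$ in $L^2$ uniformly as $\varepsilon\to 0$; this is in contrast with the self-diffusion structure $\nabla\cdot(u\nabla u)$ treated in \cite{BB,GS}. The way around this, which is the point the Remark is implicitly flagging, is to use $\varepsilon$ \emph{only} as a technical device rendering $F'_\varepsilon(\ud)$ admissible as a discrete test function, while the linear diffusion estimate for $\ud$ is obtained independently by testing with $\ud$ itself, giving $\int_\Omega |\nabla \ud|^2$ directly. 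Provided $\varepsilon(k,h)\to 0$ slowly enough that the discrete well-posedness and energy estimates of Sections 3--5 pass to the limit, the resulting family of schemes is genuinely an approximation of (\ref{modelf00}), not merely of its regularization, which is exactly what the Remark asserts.
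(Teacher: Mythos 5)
This statement is a Remark announcing a design principle, and the paper offers no proof of it: its content is substantiated only implicitly, by the constructions and stability results of Sections 3--5. Your outline of steps (a)--(b) and the first two stages of your plan do track what the paper actually does (regularize with $\lambda_\varepsilon,F_\varepsilon$, discretize by backward Euler FE, obtain mass conservation by testing with $1$, and obtain a discrete energy law by testing the $u$-equation with a discrete surrogate of $F'_\varepsilon(u)$). Two corrections, though. First, the cancellation of the chemotactic and production terms at the fully discrete level is \emph{not} achieved merely because $F'_\varepsilon$ is bounded and Lipschitz: $F'_\varepsilon(u^n_\varepsilon)$ is not a finite element function, so the paper tests with the interpolant $\Pi^h(F'_\varepsilon(u^n_\varepsilon))$ and needs the matrix $\Lambda_\varepsilon$ together with the identity (\ref{PL1}), which in turn forces the right-angle mesh hypothesis ({\bf H}) and $\mathbb{P}_1$ elements for $U_h$ (or, in the schemes \textbf{US} and \textbf{UZSW}, a reformulation with ${\boldsymbol\sigma}=\nabla v$). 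Your sketch omits this mechanism, which is the actual technical core.

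Second, and more seriously, your proposed resolution of the degenerate-dissipation obstacle would fail. You claim that a uniform $L^2$ estimate for $\nabla u^n_\varepsilon$ can be obtained ``independently by testing with $u^n_\varepsilon$ itself.'' Testing the $u$-equation with $u^n_\varepsilon$ reintroduces the uncancelled chemotactic term $(\Lambda_\varepsilon(u^n_\varepsilon)\nabla v^n_\varepsilon,\nabla u^n_\varepsilon)$, which cannot be absorbed without strong control of $\nabla v^n_\varepsilon$ that is not available; this is precisely why the energy method uses $F'_\varepsilon$ as multiplier in the first place. Consistently, the paper's estimates (\ref{weak01uv-a}) and Remark \ref{RUe} only give the $\varepsilon$-weighted bound $\varepsilon\Vert\nabla u^m_\varepsilon\Vert_0^2$ and an $l^\infty(L^2)\cap l^2(H^1)$ bound for $\sqrt{\varepsilon}\,u^n_\varepsilon$; the sole $\varepsilon$-uniform bound on $u^n_\varepsilon$ is in $l^\infty(L^1)$ (estimate (\ref{pneg1-a})$_2$). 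Finally, note that the paper never specifies a concrete coupling $\varepsilon=\varepsilon(k,h)$ nor proves convergence of the schemes to (\ref{modelf00}) as $(k,h)\to 0$; the Remark only records the intention, so your third stage goes beyond anything established in the paper and, as argued above, its key ingredient is not justified.
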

Observe that multiplying (\ref{modelf02acont})$_1$ by $F'_\varepsilon(u_\varepsilon)$, (\ref{modelf02acont})$_2$ by $-\Delta v_\varepsilon$, integrating over $\Omega$ and adding, again the chemotactic and production terms cancel, and we obtain the following energy law
\begin{eqnarray*}
\frac{d}{dt} \displaystyle \int_\Omega \Big( F_\varepsilon(u_\varepsilon) + \frac{1}{2} \vert \nabla {v}_\varepsilon\vert^2\Big) d \x + \int_\Omega \Big(F''_\varepsilon(u_\varepsilon)\vert \nabla u_\varepsilon\vert^2 +\vert \Delta v_\varepsilon\vert^2 + \vert \nabla v_\varepsilon\vert^2\Big) d \x = 0.
\end{eqnarray*}
In particular, the modified energy $\mathcal{E}_\varepsilon(u,v)= \displaystyle\int_\Omega \Big( F_\varepsilon(u) + \frac{1}{2} \vert \nabla {v}\vert^2\Big) d \x$ is decreasing in time. Then, we consider a fully discrete approximation using FE in space and backward Euler in time (considered for simplicity on a uniform partition of $[0,T]$ with time step
$k=T/N : (t_n = nk)_{n=0}^{n=N}$). Let $\Omega$ be a polygonal domain. We consider a shape-regular and quasi-uniform family of triangulations of $\Omega$, denoted by $\{\mathcal{T}_h\}_{h>0}$, with simplices $K$, $h_K= diam(K)$ and $h:= \max_{K\in \mathcal{T}_h} h_K$, so that $\overline{\Omega}=\cup_{K\in \mathcal{T}_h} \overline{K}$. Moreover, in this case we will assume the following hypothesis:
\begin{enumerate}
\item[({\bf H})]{The triangulation is structured in the sense that all simplices have a right angle.}
\end{enumerate}
We choose the following continuous FE spaces for $u$ and $v$:
$$(U_h, V_h) \subset H^1(\Omega)^2 ,\quad \hbox{generated by $\mathbb{P}_1,\mathbb{P}_m$ with $m\geq 1$.}
$$
\begin{obs}
The right angled requirement and the choice of $\mathbb{P}_1$-continuous FE for $U_h$ are nece\-ssa\-ry in order to obtain the relation (\ref{PL1}) below, which is essential in order to prove the energy-stability of the scheme \textbf{UV} (see Theorem \ref{estinc1uv} below).
\end{obs}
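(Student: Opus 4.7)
The plan is to justify that the combination of hypothesis (\textbf{H}) (right-angled simplices) together with the $\mathbb{P}_1$ choice for $U_h$ is exactly what delivers the discrete chain-rule inequality that will play the role of (\ref{PL1}) in the energy estimate: schematically, an inequality of the form
\begin{equation*}
(\nabla u_h,\nabla I_h(F'_\varepsilon(u_h))) \;\geq\; \int_\Omega F''_\varepsilon(u_h)\,|\nabla u_h|^2\, d\x,
\end{equation*}
where $I_h$ is the standard Lagrange interpolation onto $U_h$. This is the discrete counterpart of the continuous identity $(\nabla u,\nabla F'_\varepsilon(u))=\int_\Omega F''_\varepsilon(u)|\nabla u|^2\, d\x$ that produces the dissipative term in (\ref{deluvintro}). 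Without such an inequality, testing the discrete $u$-equation with $I_h(F'_\varepsilon(u_h^{n+1}))$ cannot reproduce the correct dissipation, and Theorem \ref{estinc1uv} collapses.

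First I would work element-by-element. Since $u_h\in U_h$ is continuous $\mathbb{P}_1$, the gradient $\nabla u_h$ is constant on each simplex $K$, and the interpolant $I_h(F'_\varepsilon(u_h))=\sum_i F'_\varepsilon(u_i)\phi_i$ is a legitimate member of $U_h$. The $\mathbb{P}_1$ choice is crucial here: for higher-order or non-Lagrange elements, $I_h(F'_\varepsilon(u_h))$ is generally not the nodal-value interpolation of $F'_\varepsilon(u_h)$, and the test function needed to reproduce the discrete energy identity is simply not available in $U_h$. Using $\sum_i\phi_i\equiv 1$, so that $\sum_i\nabla\phi_i=0$ on $K$, a direct symmetrization yields the Barrett--Blowey-type identity
\begin{equation*}
\int_K \nabla u_h\cdot\nabla I_h(F'_\varepsilon(u_h))\, d\x
 \;=\; -\tfrac{1}{2}\sum_{i\neq j} a^K_{ij}\,(u_i-u_j)\bigl(F'_\varepsilon(u_i)-F'_\varepsilon(u_j)\bigr),
\end{equation*}
with $a^K_{ij}:=\int_K\nabla\phi_i\cdot\nabla\phi_j\, d\x$, and by convexity plus the mean value theorem, $(u_i-u_j)(F'_\varepsilon(u_i)-F'_\varepsilon(u_j))=F''_\varepsilon(\xi_{ij})(u_i-u_j)^2\geq 0$ for some intermediate $\xi_{ij}$.

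The main obstacle is controlling the sign of the geometric coefficients $a^K_{ij}$, and this is where (\textbf{H}) enters. It is a classical fact that for $\mathbb{P}_1$ elements on non-obtuse simplices, and in particular on right-angled ones, $a^K_{ij}\leq 0$ whenever $i\neq j$ (the same condition that underlies the discrete maximum principle). Combining this with the non-negativity of each factor $F''_\varepsilon(\xi_{ij})(u_i-u_j)^2$ and comparing the resulting sum $\sum_{i\neq j}(-a^K_{ij})F''_\varepsilon(\xi_{ij})(u_i-u_j)^2$ with $F''_\varepsilon(u_h)|\nabla u_h|^2|K|$ in the standard elementwise manner, one obtains the local inequality on $K$; summing over $K\in\mathcal{T}_h$ yields (\ref{PL1}). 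If either (\textbf{H}) is dropped (obtuse simplices make $a^K_{ij}$ of indefinite sign, so the above sum need not be bounded below by the desired dissipation) or $U_h$ contains higher-order polynomials (then $\nabla u_h$ is no longer piecewise constant and $I_h(F'_\varepsilon(u_h))$ is not the right test function in $U_h$), the chain of inequalities breaks, which is precisely the content of the remark.
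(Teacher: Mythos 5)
Your reading of what (\ref{PL1}) is, and of what it does in the energy estimate, does not match the paper. Relation (\ref{PL1}) is the \emph{exact, pointwise} matrix identity $(\Lambda_\varepsilon u^h)\,\nabla\Pi^h(F'_\varepsilon(u^h))=\nabla u^h$, where $\Lambda_\varepsilon u^h$ is an elementwise-constant, symmetric, positive definite matrix constructed as in Barrett--Blowey; it is not the integrated chain-rule inequality $(\nabla u_h,\nabla I_h(F'_\varepsilon(u_h)))\geq\int_\Omega F''_\varepsilon(u_h)|\nabla u_h|^2$ that you set out to prove. The distinction matters because the essential use of (\ref{PL1}) in Theorem \ref{estinc1uv} is not the coercivity of the diffusion term but the \emph{exact cancellation} of the chemotaxis term against the production term: testing with $\Pi^h(F'_\varepsilon(u^n_\varepsilon))$ turns $-(\Lambda_\varepsilon(u^n_\varepsilon)\nabla v^n_\varepsilon,\nabla\Pi^h(F'_\varepsilon(u^n_\varepsilon)))$ into exactly $-(\nabla v^n_\varepsilon,\nabla u^n_\varepsilon)$, which cancels $(u^n_\varepsilon,(A_h-I)v^n_\varepsilon)=(\nabla u^n_\varepsilon,\nabla v^n_\varepsilon)$. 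These two terms have no definite sign, so no one-sided inequality of the type you derive can substitute for the identity; your argument only ever addresses the dissipative term $(\nabla u_h,\nabla\Pi^h(F'_\varepsilon(u_h)))$, which in the paper is handled afterwards via $\nabla\Pi^h(F'_\varepsilon(u^n_\varepsilon))=\Lambda_\varepsilon^{-1}(u^n_\varepsilon)\nabla u^n_\varepsilon$ and the spectral bounds (\ref{D}).

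The geometric hypothesis also enters differently from what you describe. You invoke the non-obtuseness condition $a^K_{ij}=\int_K\nabla\phi_i\cdot\nabla\phi_j\leq 0$ for $i\neq j$, which is the discrete maximum principle mechanism; but ({\bf H}) requires a \emph{right} angle precisely so that, on each $K$, the edge directions emanating from the right-angled vertex $p_0^K$ form an orthogonal basis. In that basis the components of $\nabla u^h$ and of $\nabla\Pi^h(F'_\varepsilon(u^h))$ are the divided differences $(u(p_l)-u(p_0))/|p_l-p_0|$ and $(F'_\varepsilon(u(p_l))-F'_\varepsilon(u(p_0)))/|p_l-p_0|$, and one defines $\Lambda_\varepsilon u^h|_K$ as the diagonal matrix with entries $(u(p_l)-u(p_0))/(F'_\varepsilon(u(p_l))-F'_\varepsilon(u(p_0)))\in[\varepsilon,\varepsilon^{-1}]$ (by the mean value theorem and $F''_\varepsilon=1/\lambda_\varepsilon$); orthogonality is what lets these componentwise identities assemble into the full vector identity (\ref{PL1}) with a symmetric matrix. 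Your observation about the role of $\mathbb{P}_1$ (elementwise-constant gradients determined by nodal values, availability of the Lagrange interpolant as a test function) is correct and consistent with the paper, but the symmetrization identity over pairs $i\neq j$, the sign condition on $a^K_{ij}$, and the final ``summing over $K$ yields (\ref{PL1})'' do not produce, and cannot produce, the pointwise identity the scheme actually relies on.
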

Let $J$ be the set of vertices of $\mathcal{T}_h$ and $\{p_j\}_{j\in J}$ the coordinates of these vertices. We denote the Lagrange interpolation operator by $\Pi^h: C(\overline{\Omega})\rightarrow U_h$, and we introduce the discrete semi-inner product on $C(\overline{\Omega})$ (which is an inner product in $U_h$) and its induced discrete seminorm (norm in $U_h$):
\begin{equation}\label{mlump}
(u_1,u_2)^h:=\int_\Omega \Pi^h (u_1 u_2), \   \vert u \vert_h=\sqrt{(u,u)^h}.
\end{equation}
\begin{obs}\label{eqh2}
In $U_h$, the norms $\vert \cdot\vert_h$ and $\Vert \cdot\Vert_0$ are equivalents uniformly with respect to $h$ (see \cite{PB}).
\end{obs}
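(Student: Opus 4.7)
The plan is a standard reference-element argument: establish the equivalence on the reference simplex $\widehat K$, then transport it to each $K\in\mathcal{T}_h$ via affine invariance, and finally sum over the mesh.

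First, fix $K\in\mathcal{T}_h$ with vertices $\{p_j\}$ and take $u_h\in U_h$ restricted to $K$, so $u_h|_K\in \mathbb{P}_1(K)$. Since $\Pi^h(u_h^2)$ is the $\mathbb{P}_1$ Lagrange interpolant of $u_h^2$ and satisfies $(\Pi^h(u_h^2))(p_j)=u_h(p_j)^2$, the integral of an affine function on a simplex gives
\begin{equation*}
\int_K \Pi^h(u_h^2) \,=\, \frac{|K|}{d+1}\sum_{p_j\in K} u_h(p_j)^2,
\end{equation*}
while $\int_K u_h^2$ is a positive-definite quadratic form in the nodal values (via the local mass matrix). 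So both $|u_h|_h^2|_K$ and $\Vert u_h\Vert_0^2|_K$ are positive-definite quadratic forms in the vector $(u_h(p_j))_{p_j\in K}\in \mathbb{R}^{d+1}$.

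Second, map to the reference simplex $\widehat K$ by the affine transformation $F_K:\widehat K\to K$, and observe that $\Pi^h(u_h^2)\circ F_K = \widehat{\Pi}(\widehat u_h^{\,2})$ (both are $\mathbb{P}_1$ functions on $\widehat K$ agreeing with $\widehat u_h^{\,2}$ at the vertices). Since $\widehat u\mapsto \int_{\widehat K}\widehat u^2$ and $\widehat u\mapsto \int_{\widehat K}\widehat{\Pi}(\widehat u^2)$ are two positive-definite quadratic forms on the finite-dimensional space $\mathbb{P}_1(\widehat K)$, they are equivalent: there exist $c,C>0$ depending only on $d$ such that
\begin{equation*}
c \int_{\widehat K}\widehat u^2 \,\leq\, \int_{\widehat K}\widehat{\Pi}(\widehat u^2) \,\leq\, C \int_{\widehat K}\widehat u^2, \quad \forall\, \widehat u\in \mathbb{P}_1(\widehat K).
\end{equation*}

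Third, pulling back by $F_K$, both integrals pick up the same Jacobian factor $|\det DF_K|$, so the reference inequality transfers to each $K\in\mathcal{T}_h$ with the constants $c,C$ unchanged. Summing over $K\in\mathcal{T}_h$ yields $c\Vert u_h\Vert_0^2\leq |u_h|_h^2\leq C\Vert u_h\Vert_0^2$ for every $u_h\in U_h$, which is the stated uniform equivalence. There is essentially no genuine obstacle: the argument hinges only on using $\mathbb{P}_1$ elements (so that the mass-lumped nodal formula above is exact) and on the fact that every simplex in the shape-regular, quasi-uniform family is an affine image of a single reference $\widehat K$, which makes the equivalence constants independent of $K$ and of $h$.
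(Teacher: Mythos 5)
Your proof is correct and complete. The paper itself offers no argument for this remark --- it simply cites \cite{PB} --- and what you have written is precisely the standard mass-lumping equivalence proof that underlies that reference: the exact nodal formula $\int_K \Pi^h(u_h^2)=\tfrac{|K|}{d+1}\sum_j u_h(p_j)^2$ valid for $\mathbb{P}_1$ elements, equivalence of the two positive-definite quadratic forms on the reference simplex, and the observation that both forms pick up the identical Jacobian factor under the affine map, so the constants survive summation over the mesh unchanged. One small remark: the last step shows that the equivalence constants depend only on $d$, so shape-regularity and quasi-uniformity of the triangulation are in fact not needed at all here (any simplex is an affine image of $\widehat K$); your closing sentence slightly over-attributes the uniformity to those mesh hypotheses, but nothing in the argument actually uses them.
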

We consider also the  $L^2$-projection $Q^h:L^2(\Omega)\rightarrow U_h$ given by
\begin{equation}\label{MLP2}
(Q^h u,\bar{u})^h=(u,\bar{u}), \ \ \forall \bar{u}\in U_h,
\end{equation}
and the standard $H^1$-projection $R^h:H^1(\Omega)\rightarrow V_h$. Moreover, for each $\varepsilon\in (0,1)$ we consider the construction of the operator $\Lambda_\varepsilon: U_h\rightarrow L^\infty(\Omega)^{d\times d}$ given in  \cite{BB}, satisfying that $\Lambda_\varepsilon u^h$ is a symmetric and positive definite matrix  for all $u^h\in U_h$ and a.e. $\x$ in $\Omega$, and the following relation holds  
\begin{equation}\label{PL1}
(\Lambda_\varepsilon u^h) \nabla \Pi^h (F'_\varepsilon(u^h))=\nabla u^h \ \ \mbox{ in } \Omega.
\end{equation}
Basically, $\Lambda_\varepsilon u^h$ is a constant by elements matrix such that (\ref{PL1}) holds by elements. We highlight that (\ref{PL1}) is satisfied due to the right angled constraint requirement ({\bf H}) and the choice of $\mathbb{P}_1$-continuous FE for $U_h$. Moreover, the following stability estimate holds \cite{BB,GS}
\begin{equation}\label{estlambda}
\Vert \Lambda_\varepsilon (u^h)\Vert_{L^r}^r \leq C(1+\Vert u^h \Vert_1^2), \ \ \forall u^h \in U_h \ \mbox{ (for } r=2(d+1)/d),
\end{equation}
where the constant $C>0$ is independent of $\varepsilon$ and $h$. We recall the result below concerning to $\Lambda_\varepsilon(\cdot)$ (see \cite[Lemma 2.1]{BB}).
\begin{lem}\label{lemconv}
Let $\Vert \cdot \Vert$ denote the spectral norm on $\mathbb{R}^{d\times d}$. Then for any given $\varepsilon\in (0,1)$ the function $\Lambda_\varepsilon:U_h\rightarrow [L^\infty(\Omega)]^{d\times d}$ is continuous and satisfies
\begin{equation}\label{D}
\varepsilon \xi^T \xi \leq \xi^T \Lambda_\varepsilon(u^h) \xi \leq \varepsilon^{-1} \xi^T \xi, \ \ \forall \xi \in \mathbb{R}^d, \ \forall u^h\in U_h.
\end{equation}
In particular, for all $u^h_1,u^h_2 \in U_h$ and $K\in \mathcal{T}_h$ with vertices $\{p_l^K\}_{l=0}^d$, it holds
\begin{equation}\label{eqL}
\Vert (\Lambda_\varepsilon(u^h_1) - \Lambda_\varepsilon(u^h_2))\vert_K \Vert \leq \varepsilon^{-2} \max_{l=1,...,d} \{\vert u^h_1( p_{l}^K) - u^h_2 (p_{l}^K))\vert + \vert u^h_1( p_{0}^K) - u^h_2 (p_{0}^K))\vert  \},
\end{equation}
where $p^K_0$ is a fixed acute-angled vertex.
\end{lem}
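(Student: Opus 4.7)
The plan is to prove all three claims by revisiting the element-by-element construction of $\Lambda_\varepsilon$ underlying (\ref{PL1}). Fix $K\in\mathcal{T}_h$ and let $p_0^K$ be the right-angle vertex provided by hypothesis ({\bf H}), so that the edge vectors $v_i := p_i^K - p_0^K$ are mutually orthogonal and $\hat v_i := v_i/|v_i|$ form an orthonormal basis of $\mathbb{R}^d$. Since $u^h|_K\in\mathbb{P}_1$, the vectors $\nabla u^h$ and $\nabla\Pi^h F'_\varepsilon(u^h)$ are constant on $K$ with $i$-th components $(u^h(p_i^K)-u^h(p_0^K))/|v_i|$ and $(F'_\varepsilon(u^h(p_i^K))-F'_\varepsilon(u^h(p_0^K)))/|v_i|$, respectively, in this basis. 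Introducing the divided-difference function
$$
\mu_\varepsilon(a,b):=\frac{a-b}{F'_\varepsilon(a)-F'_\varepsilon(b)}\quad(a\neq b),\qquad \mu_\varepsilon(a,a):=\lambda_\varepsilon(a),
$$
I would take $\Lambda_\varepsilon(u^h)|_K$ to be the diagonal matrix in the basis $\{\hat v_1,\ldots,\hat v_d\}$ with entries $\mu_\varepsilon(u^h(p_i^K),u^h(p_0^K))$; then (\ref{PL1}) holds componentwise on $K$ by the very definition of $\mu_\varepsilon$.

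The spectral bound (\ref{D}) follows from the integral representation
$$
\mu_\varepsilon(a,b)^{-1}=\int_0^1 F''_\varepsilon((1-t)b+ta)\,dt=\int_0^1 \frac{dt}{\lambda_\varepsilon((1-t)b+ta)},
$$
together with $\lambda_\varepsilon\in[\varepsilon,\varepsilon^{-1}]$, which yields $\mu_\varepsilon(a,b)\in[\varepsilon,\varepsilon^{-1}]$; since $\Lambda_\varepsilon(u^h)|_K$ is diagonal in an orthonormal basis with these entries, it is symmetric positive definite and (\ref{D}) is immediate. Continuity of $\Lambda_\varepsilon:U_h\to L^\infty(\Omega)^{d\times d}$ reduces, on each of the finitely many elements, to the joint continuity of $\mu_\varepsilon$, which is transparent from the same integral formula.

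The main obstacle is the Lipschitz estimate (\ref{eqL}). Because $\Lambda_\varepsilon(u_1^h)|_K$ and $\Lambda_\varepsilon(u_2^h)|_K$ are diagonal in the \emph{same} orthonormal basis, their difference has spectral norm $\max_i|\mu_\varepsilon(u_1^h(p_i^K),u_1^h(p_0^K))-\mu_\varepsilon(u_2^h(p_i^K),u_2^h(p_0^K))|$, and it suffices to bound a single entry. The naive route of first estimating $|\mu_\varepsilon(a_1,b_1)^{-1}-\mu_\varepsilon(a_2,b_2)^{-1}|$ by the $\varepsilon^{-2}$-Lipschitz continuity of $F''_\varepsilon$ and then undoing the reciprocal loses an extra factor $\varepsilon^{-2}$ and yields only $\varepsilon^{-4}$, which is weaker than (\ref{eqL}). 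I would instead rewrite $\mu_\varepsilon$ as an average along a segment in the image of $g:=F'_\varepsilon$: the substitution $s=g^{-1}((1-t)g(b)+tg(a))$ produces
$$
\mu_\varepsilon(a,b)=\int_0^1\lambda_\varepsilon(\psi_{a,b}(t))\,dt,\qquad \psi_{a,b}(t):=g^{-1}\bigl((1-t)g(b)+tg(a)\bigr).
$$
Combining the $1$-Lipschitz continuity of $\lambda_\varepsilon$, the $\varepsilon^{-1}$-Lipschitz continuity of both $g$ and $g^{-1}$ (since $g',(g^{-1})'\in[\varepsilon,\varepsilon^{-1}]$) and the triangle inequality, one obtains $|\psi_{a_1,b_1}(t)-\psi_{a_2,b_2}(t)|\leq\varepsilon^{-2}\bigl((1-t)|b_1-b_2|+t|a_1-a_2|\bigr)$, and integrating in $t$ recovers exactly the $\varepsilon^{-2}$ factor announced in (\ref{eqL}).
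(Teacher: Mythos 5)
Your argument is correct and is essentially the element-by-element construction of Barrett and Blowey, which is exactly what the paper relies on: it states this lemma with a citation to \cite{BB} and gives no proof of its own. The only cosmetic discrepancy is that you place $p_0^K$ at the right angle (as the construction requires, and as in \cite{BB}) whereas the statement here calls it an ``acute-angled vertex''; your diagonalization of $\Lambda_\varepsilon(u^h)\vert_K$ in the orthonormal edge basis, the divided-difference representation $\mu_\varepsilon(a,b)=\int_0^1\lambda_\varepsilon(\psi_{a,b}(t))\,dt$, and the resulting $\varepsilon^{-2}$ Lipschitz bound (in fact you get the slightly sharper constant $\tfrac{1}{2}\varepsilon^{-2}(|a_1-a_2|+|b_1-b_2|)$, which implies (\ref{eqL})) all check out.
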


Let $A_h: V_h \rightarrow V_h$ be the linear operator defined as follows
\begin{equation*}
(A_h v^h, \bar{v})=(\nabla v^h,\nabla \bar{v})+( v^h, \bar{v}) , \ \ \forall \bar{v}\in V_h.
\end{equation*}
Then, the following estimate holds (see for instance, \cite[Theorem 3.2]{FMD2}):
\begin{equation}\label{estW16}
\Vert v^h \Vert_{W^{1,6}}\leq C \Vert A_h v^h\Vert_0, \ \ \forall v^h \in V_h.
\end{equation}
Taking into account the regularized problem (\ref{modelf02acont}), we consider the following first order in time, nonlinear and coupled scheme: 
\begin{itemize}
\item{\underline{\emph{Scheme \textbf{UV}}:}\\
{\bf Initialization}: Let $(u^{0}_h,{v}_h^{0})=(Q^h u_0,R^h {v}_{0})\in  U_h\times V_h$. \\
{\bf Time step} n: Given $(u^{n-1}_\varepsilon,{v}^{n-1}_\varepsilon)\in  U_h\times {V}_h$, compute $(u^n_\varepsilon,{v}^{n}_\varepsilon)\in U_h \times {V}_h$ solving
\begin{equation}
\left\{
\begin{array}
[c]{lll}%
(\delta_t u^n_\varepsilon,\bar{u})^h + (\nabla u^n_\varepsilon,\nabla \bar{u}) + (\Lambda_\varepsilon (u^{n}_\varepsilon)\nabla {v}^n_\varepsilon,\nabla \bar{u})= 0, \ \ \forall \bar{u}\in U_h,\\
(\delta_t {v}^n_\varepsilon,\bar{v}) +(A_h  v^n_\varepsilon, \bar{v}) - (u^{n}_\varepsilon ,\bar{v}) =0,\ \ \forall
\bar{v}\in V_h,
\end{array}
\right.  \label{modelf02a}
\end{equation}}
where, in general, we denote $\delta_t a^n:= \displaystyle\frac{a^n - a^{n-1}}{k}$.
\end{itemize}

\subsection{Mass conservation and Energy-stability}\label{ELuv}
Since $\bar{u}=1\in U_h$ and $\bar{v}=1 \in V_h$, we deduce that the scheme \textbf{UV} is conservative in $u^n_\varepsilon$, that is,
\begin{equation}\label{conu1}
(u_\varepsilon^n,1)=(u^n_\varepsilon,1)^h= (u^{n-1}_\varepsilon,1)^h=\cdot\cdot\cdot= (u^{0}_h,1)^h=(u_h^0,1)=(Q^hu_0,1)=(u_0,1):=m_0,
\end{equation}
and we have the following behavior for $\int_\Omega v^n_\varepsilon$: 
\begin{equation}\label{conv1} 
\delta_t \left(\int_\Omega v^n_\varepsilon \right) + \int_\Omega v^n_\varepsilon= \int_\Omega u^n_\varepsilon =m_0 .
\end{equation}

\begin{lem} {\bf(Estimate of $\left\vert \int_\Omega v^n_\varepsilon\right\vert$) }\label{LemvL1}
The following estimate holds 
\begin{equation}\label{weak02UV}
\left\vert \int_\Omega v^n_\varepsilon \right\vert\leq (1+k)^{-n}  \left\vert \int_\Omega v_0 \right\vert+ m_0, \ \ \ \forall n\geq 0.
\end{equation}
\end{lem}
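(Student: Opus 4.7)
The plan is to use the scalar recursion (\ref{conv1}) directly and apply Lemma \ref{tmaD} to the sequence of absolute values $d^n := \left|\int_\Omega v^n_\varepsilon\right|$.

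First, I would rewrite (\ref{conv1}) in incremental form. By expanding $\delta_t \left(\int_\Omega v^n_\varepsilon\right) = \frac{1}{k}\bigl(\int_\Omega v^n_\varepsilon - \int_\Omega v^{n-1}_\varepsilon\bigr)$ and multiplying by $k$, one obtains the one-step identity
\begin{equation*}
(1+k)\int_\Omega v^n_\varepsilon \;=\; \int_\Omega v^{n-1}_\varepsilon \;+\; k\,m_0.
\end{equation*}
Since $u_0\geq 0$, we have $m_0\geq 0$, so applying the triangle inequality to both sides gives
\begin{equation*}
(1+k)\,\left|\int_\Omega v^n_\varepsilon\right| \;\leq\; \left|\int_\Omega v^{n-1}_\varepsilon\right| \;+\; k\,m_0.
\end{equation*}

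At this point the estimate follows immediately from Lemma \ref{tmaD} applied with $d^n = \left|\int_\Omega v^n_\varepsilon\right|$, $\delta = 1$, $\beta = m_0$ and $n_0 = 0$, yielding
\begin{equation*}
\left|\int_\Omega v^n_\varepsilon\right| \;\leq\; (1+k)^{-n}\left|\int_\Omega v_0\right| + m_0,\qquad \forall n\geq 0,
\end{equation*}
after noting that $\int_\Omega v_h^0 = \int_\Omega R^h v_0 = \int_\Omega v_0$ (or at worst, that the projection preserves the constant test), which matches the right-hand side.

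There is no real obstacle here: the $v$-equation tested with $\bar v = 1$ decouples into a scalar affine recursion, and the only point requiring a slight remark is that the forcing $m_0$ is nonnegative so that taking moduli does not worsen the constant. If one wished to avoid any dependence on the sign, replacing $m_0$ by $|m_0|$ in the statement would work equally well, but in our setting $m_0\geq 0$ so the bound is sharp as written.
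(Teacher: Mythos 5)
Your argument is correct and follows the same route as the paper: test the $v$-equation with $\bar v=1$ to get the scalar affine recursion, take absolute values to obtain $(1+k)\left\vert \int_\Omega v^n_\varepsilon \right\vert \leq \left\vert \int_\Omega v^{n-1}_\varepsilon \right\vert + k\,m_0$, and apply Lemma \ref{tmaD} with $\delta=1$, $\beta=m_0$. Your explicit remarks on $m_0\geq 0$ and on $\int_\Omega R^h v_0=\int_\Omega v_0$ are points the paper leaves implicit, but the proof is identical in substance.
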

\begin{proof}
From (\ref{conv1}) we have $(1+k) \left\vert \int_\Omega v^n_\varepsilon \right\vert - \left\vert \int_\Omega v^{n-1}_\varepsilon \right\vert \leq k\; m_0$, and therefore, applying Lemma \ref{tmaD} (for $\delta=1$ and $\beta=m_0$), we arrive at
$$
\left\vert \int_\Omega v^n_\varepsilon \right\vert \leq (1+k)^{-n}  \left\vert \int_\Omega v^0_\varepsilon \right\vert+ m_0= (1+k)^{-n}  \left\vert \int_\Omega R^h v_0 \right\vert+ m_0,
$$
which implies (\ref{weak02UV}).
\end{proof}

\begin{defi}
A numerical scheme with solution $(u^n_\varepsilon,v^n_\varepsilon)$ is called energy-stable with respect to the  energy 
\begin{equation}\label{Euv}
\mathcal{E}_\varepsilon^h(u,v)=(F_\varepsilon(u),1)^h + \frac{1}{2} \Vert \nabla {v}\Vert_0^2
\end{equation}
if this energy is time decreasing, that is, $\mathcal{E}_\varepsilon^h(u^n_\varepsilon,v^n_\varepsilon)\leq \mathcal{E}_\varepsilon^h(u^{n-1}_\varepsilon,v^{n-1}_\varepsilon)$ for all $n\geq 1$.
\end{defi}

\begin{tma} {\bf (Unconditional stability)} \label{estinc1uv}
The scheme \textbf{UV} is unconditional energy stable with respect to $\mathcal{E}_\varepsilon^h(u,v)$. In fact, if $(u^n_\varepsilon,{v}^n_\varepsilon)$ is a solution of \textbf{UV}, then the following discrete energy law holds
\begin{equation}\label{deluv}
\delta_t \mathcal{E}_\varepsilon^h(u^n_\varepsilon,v^n_\varepsilon)+ \varepsilon\frac{k}{2}\Vert \delta_t u^n_\varepsilon \Vert_0^2 + \frac{k}{2} \Vert \delta_t \nabla v^n_\varepsilon\Vert_0^2 + \varepsilon\Vert \nabla u^n_\varepsilon\Vert_0^2 +\Vert (A_h - I) v^n_\varepsilon\Vert_0^2 + \Vert \nabla v^n_\varepsilon\Vert_0^2 \leq 0.
\end{equation}
\end{tma}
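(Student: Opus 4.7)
The plan is to mimic the continuous energy derivation at the discrete level, testing the first equation with the discrete analog of $F'_\varepsilon(u)$ and the second with the discrete analog of $-\Delta v + v$, namely $\bar v = A_h v^n_\varepsilon - v^n_\varepsilon \in V_h$. Adding the two results, the chemotactic contribution and the production contribution should cancel exactly, just as in the continuous case, leaving only dissipative terms.

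More precisely, I would first take $\bar u = \Pi^h F'_\varepsilon(u^n_\varepsilon) \in U_h$ in (\ref{modelf02a})$_1$. For the time term, I evaluate the mass-lumped inner product node by node and apply a Taylor expansion of $F_\varepsilon$ around $u^n_\varepsilon(p_j)$; since $F''_\varepsilon = 1/\lambda_\varepsilon \geq \varepsilon$ by (\ref{F2pE})--(\ref{aE}), this yields
\begin{equation*}
(\delta_t u^n_\varepsilon, \Pi^h F'_\varepsilon(u^n_\varepsilon))^h \;\geq\; \delta_t (F_\varepsilon(u^n_\varepsilon),1)^h + \frac{\varepsilon\, k}{2}\,|\delta_t u^n_\varepsilon|_h^2,
\end{equation*}
which, by the norm equivalence in Remark \ref{eqh2}, bounds the first dissipative term in (\ref{deluv}). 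For the diffusion term I invoke the crucial identity (\ref{PL1}), which is the reason for hypothesis (\textbf{H}) and the $\mathbb{P}_1$-choice for $U_h$: it allows me to write $\nabla \Pi^h F'_\varepsilon(u^n_\varepsilon) = \Lambda_\varepsilon(u^n_\varepsilon)^{-1}\nabla u^n_\varepsilon$, so that $(\nabla u^n_\varepsilon, \nabla \Pi^h F'_\varepsilon(u^n_\varepsilon)) = (\Lambda_\varepsilon(u^n_\varepsilon)^{-1}\nabla u^n_\varepsilon,\nabla u^n_\varepsilon) \geq \varepsilon\|\nabla u^n_\varepsilon\|_0^2$ by (\ref{D}). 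For the chemotactic term, the symmetry of $\Lambda_\varepsilon(u^n_\varepsilon)$ combined with (\ref{PL1}) gives
\begin{equation*}
(\Lambda_\varepsilon(u^n_\varepsilon)\nabla v^n_\varepsilon,\nabla \Pi^h F'_\varepsilon(u^n_\varepsilon)) \;=\; (\nabla v^n_\varepsilon, \Lambda_\varepsilon(u^n_\varepsilon)\nabla \Pi^h F'_\varepsilon(u^n_\varepsilon)) \;=\; (\nabla v^n_\varepsilon,\nabla u^n_\varepsilon).
\end{equation*}

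Next, testing (\ref{modelf02a})$_2$ with $\bar v = A_h v^n_\varepsilon - v^n_\varepsilon$, I use the definition of $A_h$ to identify $(\delta_t v^n_\varepsilon, A_h v^n_\varepsilon - v^n_\varepsilon) = (\nabla \delta_t v^n_\varepsilon,\nabla v^n_\varepsilon)$, and then the standard algebraic identity $(a-b,a) = \tfrac12(|a|^2 - |b|^2 + |a-b|^2)$ applied to $\nabla v^n_\varepsilon$ produces $\tfrac12 \delta_t \|\nabla v^n_\varepsilon\|_0^2 + \tfrac{k}{2}\|\delta_t \nabla v^n_\varepsilon\|_0^2$. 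The term $(A_h v^n_\varepsilon, A_h v^n_\varepsilon - v^n_\varepsilon)$ I expand as $\|A_h v^n_\varepsilon - v^n_\varepsilon\|_0^2 + (v^n_\varepsilon, A_h v^n_\varepsilon - v^n_\varepsilon) = \|(A_h - I) v^n_\varepsilon\|_0^2 + \|\nabla v^n_\varepsilon\|_0^2$, again using the definition of $A_h$. Finally, since $U_h \subset V_h$ (because $\mathbb{P}_1 \subset \mathbb{P}_m$ for $m\geq 1$), I may use $u^n_\varepsilon$ as a test function for $A_h$, giving $(u^n_\varepsilon, A_h v^n_\varepsilon - v^n_\varepsilon) = (\nabla u^n_\varepsilon, \nabla v^n_\varepsilon)$.

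Adding the two tested equations, the terms $(\nabla u^n_\varepsilon, \nabla v^n_\varepsilon)$ coming from the chemotactic flux and from the production term cancel, which is the discrete counterpart of the continuous cancellation. The remaining terms assemble into exactly the inequality (\ref{deluv}). The main conceptual obstacle is the handling of both the self-diffusion and the chemotactic term with a single test function $\Pi^h F'_\varepsilon(u^n_\varepsilon)$: this is possible only thanks to the tensor $\Lambda_\varepsilon$ and the identity (\ref{PL1}), whose validity rests on the structural hypothesis (\textbf{H}) and on the use of $\mathbb{P}_1$-continuous elements for $U_h$; once that identity is in place, the remaining computations reduce to standard backward-Euler manipulations and the convexity lower bound for $F_\varepsilon$.
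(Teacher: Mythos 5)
Your proposal is correct and follows essentially the same route as the paper's proof: testing with $\bar u=\Pi^h F'_\varepsilon(u^n_\varepsilon)$ and $\bar v=(A_h-I)v^n_\varepsilon$, using the symmetry of $\Lambda_\varepsilon$ together with (\ref{PL1}) to cancel the chemotactic and production contributions, a Taylor expansion with $F''_\varepsilon\geq\varepsilon$ for the time term, and (\ref{D}) for the diffusion term. Your explicit remark that $U_h\subseteq V_h$ is what legitimizes $(u^n_\varepsilon,(A_h-I)v^n_\varepsilon)=(\nabla u^n_\varepsilon,\nabla v^n_\varepsilon)$ is a detail the paper leaves implicit, but it is not a different argument.
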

\begin{proof}
Testing (\ref{modelf02a})$_1$ by $\bar{u}= \Pi^h (F'_\varepsilon (u^n_\varepsilon))$ and (\ref{modelf02a})$_2$ by $\bar{v}=(A_h-I) v^n_\varepsilon$, adding and taking into account that $\Lambda_\varepsilon(u^n_\varepsilon)$ is symmetric as well as (\ref{PL1}) (which implies that $\nabla \Pi^h (F'_\varepsilon (u^n_\varepsilon))=\Lambda_\varepsilon^{-1} (u^{n}_\varepsilon) \nabla u^n_\varepsilon$), the terms $-(\Lambda_\varepsilon (u^{n}_\varepsilon) \nabla {v}^n_\varepsilon,\nabla \Pi^h (F'_\varepsilon (u^n_\varepsilon)))=-(\nabla {v}^n_\varepsilon,\Lambda_\varepsilon (u^{n}_\varepsilon)\nabla \Pi^h (F'_\varepsilon (u^n_\varepsilon)))=-(\nabla {v}^n_\varepsilon,\nabla u^{n}_\varepsilon)$ and $(u^n_\varepsilon,(A_h-I) v^n_\varepsilon)=(\nabla {u}^n_\varepsilon,\nabla v^{n}_\varepsilon)$ cancel, and we obtain
\begin{eqnarray}\label{I01a}
&(\delta_t u^n_\varepsilon,F'_\varepsilon (u^n_\varepsilon))^h&\!\!\! + \int_\Omega (\nabla u^n_\varepsilon)^T\!\cdot\!\Lambda_\varepsilon^{-1} (u^{n}_\varepsilon)\!\cdot\! \nabla u^n_\varepsilon d\x\nonumber\\
&&\!\!\! +
\delta_t \Big( \frac{1}{2} \Vert \nabla {v}^n_\varepsilon\Vert_0^2\Big) + \frac{k}{2} \Vert \delta_t \nabla v^n_\varepsilon\Vert_0^2 +\Vert (A_h-I) v^n_\varepsilon\Vert_0^2 + \Vert \nabla v^n_\varepsilon\Vert_0^2 = 0.
\end{eqnarray}
Moreover, observe that from the Taylor formula we have
\begin{equation*}
F_\varepsilon(u^{n-1}_\varepsilon)=F_\varepsilon(u^n_\varepsilon)+F'_\varepsilon(u^n_\varepsilon)(u^{n-1}_\varepsilon-u^n_\varepsilon)+\frac{1}{2}F''_\varepsilon(\theta
u^n_\varepsilon+(1-\theta)u^{n-1}_\varepsilon)(u^{n-1}_\varepsilon-u^n_\varepsilon)^2,
\end{equation*}
and therefore,
\begin{equation}\label{I01b}
F'_\varepsilon (u^n_\varepsilon) \delta_t u^n_\varepsilon = \delta_t \Big( F_\varepsilon(u^n_\varepsilon) \Big) + \frac{k}{2} F''_\varepsilon(\theta
u^n_\varepsilon+(1-\theta)u^{n-1}_\varepsilon)(\delta_t u^n_\varepsilon)^2.
\end{equation}
Then, using (\ref{I01b}) and taking into account that $\Pi^h$ is linear and $F''_\varepsilon(s)\geq \varepsilon$ for all $s\in \mathbb{R}$, we have
\begin{eqnarray}\label{I01c}
&(\delta_t u^n_\varepsilon,F'_\varepsilon (u^n_\varepsilon))^h&\!\!\!=\int_\Omega \Pi^h(\delta_t u^n_\varepsilon\cdot F'_\varepsilon (u^n_\varepsilon)) \nonumber\\
&&\!\!\!= \delta_t \Big( \int_\Omega \Pi^h (F_\varepsilon(u^n_\varepsilon)) \Big) + \frac{k}{2} \int_\Omega \Pi^h(F''_\varepsilon(\theta
u^n_\varepsilon+(1-\theta)u^{n-1}_\varepsilon)(\delta_t u^n_\varepsilon)^2)\nonumber\\
&& \!\!\! \geq \delta_t (F_\varepsilon(u^n_\varepsilon),1)^h  + \varepsilon\frac{k}{2}\vert \delta_t u^n_\varepsilon \vert_h^2.
\end{eqnarray}
Thus, from (\ref{D}), (\ref{I01a}), (\ref{I01c}) and Remark \ref{eqh2}, we arrive at (\ref{deluv}).
\end{proof}

\begin{cor} \label{welem} {\bf(Uniform estimates) }
Assume that $(u_0,v_0)\in L^2(\Omega)\times H^1(\Omega)$. Let $(u^n_\varepsilon,v^n_\varepsilon)$ be a solution of scheme \textbf{UV}. Then, it holds 
\begin{equation}\label{weak01uv-a}
(F_\varepsilon(u^n_\varepsilon),1)^h + \frac{1}{2}\Vert v^n_\varepsilon\Vert_{1}^{2}+k \underset{m=1}{\overset{n}{\sum}}\left( \varepsilon\Vert \nabla u^m_\varepsilon\Vert_0^2 +\Vert (A_h-I) v^m_\varepsilon\Vert_0^2 + \Vert \nabla v^m_\varepsilon\Vert_0^2\right)\leq C_0, \ \ \forall n\geq 1,
\end{equation}
\begin{equation}\label{weak01uv-b}
k \underset{m=n_0 + 1}{\overset{n+n_0}{\sum}} \Vert v^m_\varepsilon\Vert_{W^{1,6}}^2 \leq C_1(1+kn), \ \ \forall n\geq 1,
\end{equation}
where the integer $n_0\geq 0$ is arbitrary, with the constants $C_0,C_1>0$ depending on the data $(\Omega,u_0, v_0)$, but independent of $k,h,n$ and $\varepsilon$. Moreover, if $\varepsilon\in (0,e^{-2})$, the following estimates hold
\begin{equation}\label{pneg1-a}
 \int_\Omega (\Pi^h (u^n_{\varepsilon-}))^2 \leq C_0 \varepsilon, \ \ \mbox{ and} \ \
\int_\Omega \vert u^n_\varepsilon\vert \leq m_0+C\sqrt{\varepsilon}, \ \ \forall n\geq 1,
\end{equation}
where $u^n_{\varepsilon-}:=\min\{u^n_\varepsilon, 0 \}\leq 0$ and the constant $C>0$ depends on the data $(\Omega,u_0, v_0)$, but is independent of $k,h,n$ and $\varepsilon$.
\end{cor}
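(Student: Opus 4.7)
The backbone of the argument is the discrete energy law (\ref{deluv}) from Theorem \ref{estinc1uv}: once we have it, the three estimates should all follow by telescoping and by exploiting the $\varepsilon$-scaling of $F_\varepsilon$. The plan is therefore to sum (\ref{deluv}) from $m=1$ to $n$, drop the non-negative numerical dissipation terms $\tfrac{k^2}{2}(\varepsilon\|\delta_t u^m_\varepsilon\|_0^2 + \|\delta_t \nabla v^m_\varepsilon\|_0^2)$, and combine the result with the mass-type estimate of Lemma \ref{LemvL1} and with the $A_h$-regularity inequality (\ref{estW16}).

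For (\ref{weak01uv-a}) I would telescope the $\delta_t$ term in (\ref{deluv}) to obtain
\begin{equation*}
\mathcal{E}_\varepsilon^h(u^n_\varepsilon,v^n_\varepsilon) + k\sum_{m=1}^{n}\Big(\varepsilon\|\nabla u^m_\varepsilon\|_0^2 + \|(A_h-I)v^m_\varepsilon\|_0^2 + \|\nabla v^m_\varepsilon\|_0^2\Big) \le \mathcal{E}_\varepsilon^h(u^0_h,v^0_h).
\end{equation*}
The initial energy must then be bounded independently of $(k,h,\varepsilon)$; this is where the main technical nuisance lies, and one uses that $F_\varepsilon(s)\le F(s)$ for $s\ge \varepsilon$ together with the quadratic tails and the stability of $Q^h,R^h$ to control $(F_\varepsilon(Q^hu_0),1)^h + \tfrac{1}{2}\|\nabla R^h v_0\|_0^2$ by a constant depending only on $(\Omega,u_0,v_0)$. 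Since $F_\varepsilon\ge 0$, this bounds $\|\nabla v^n_\varepsilon\|_0^2$; combined with $|\int_\Omega v^n_\varepsilon|\le C$ from Lemma \ref{LemvL1} and the equivalent $H^1$-norm $\|v\|_1^2 = \|\nabla v\|_0^2+(\int_\Omega v)^2$, one gets the announced bound on $\|v^n_\varepsilon\|_1^2$. For (\ref{weak01uv-b}) the estimate is immediate from (\ref{estW16}): $\|v^m_\varepsilon\|_{W^{1,6}}^2\le C\|A_h v^m_\varepsilon\|_0^2 \le C\big(\|(A_h-I)v^m_\varepsilon\|_0^2 + \|v^m_\varepsilon\|_0^2\big)$, so summing from $m=n_0+1$ to $n_0+n$ and using the dissipation part of (\ref{weak01uv-a}) on the first piece and $\|v^m_\varepsilon\|_0^2\le C$ on the second produces a bound of order $C_0 + C\,kn$.

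For (\ref{pneg1-a}) I would exploit the lower bound in (\ref{PNa}), applied \emph{nodewise}: at every vertex $p_j$, $F_\varepsilon(u^n_\varepsilon(p_j))\ge \tfrac{1}{2\varepsilon}(u^n_{\varepsilon-}(p_j))^2$, trivially if $u^n_\varepsilon(p_j)\ge 0$ and by (\ref{PNa}) otherwise. Multiplying by the lumped mass at $p_j$ and summing yields
\begin{equation*}
|\Pi^h u^n_{\varepsilon-}|_h^2 \le 2\varepsilon\,(F_\varepsilon(u^n_\varepsilon),1)^h \le 2\varepsilon\,C_0,
\end{equation*}
and Remark \ref{eqh2} upgrades this to $\int_\Omega(\Pi^h u^n_{\varepsilon-})^2 \le C_0\varepsilon$. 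The $L^1$-bound then follows from the conservation identity (\ref{conu1}): $\int_\Omega |u^n_\varepsilon| = m_0 - 2\int_\Omega u^n_{\varepsilon-}$, and the integral on the right is controlled by Cauchy--Schwarz once one passes from $u^n_{\varepsilon-}$ to $\Pi^h u^n_{\varepsilon-}$ (a step that uses, elementwise, that $|u^n_{\varepsilon-}|\le |\Pi^h u^n_{\varepsilon-}|$ pointwise, since $u^n_\varepsilon\in U_h$ is $\mathbb{P}_1$ and the nodal interpolant of the negative part dominates the continuous negative part). The main obstacle throughout is the very first step — ensuring the initial energy $\mathcal{E}_\varepsilon^h(u^0_h,v^0_h)$ is uniform in $\varepsilon$ — after which everything reduces to telescoping, norm equivalence, and the nodewise coercivity (\ref{PNa}).
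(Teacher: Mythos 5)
Your proposal is correct and follows essentially the same route as the paper: telescoping the discrete energy law (\ref{deluv}) after bounding the initial energy via $F_\varepsilon(s)\leq C(s^2+1)$ for $s\geq 0$, combining with Lemma \ref{LemvL1} and (\ref{estW16}) for (\ref{weak01uv-a})--(\ref{weak01uv-b}), and using the nodewise coercivity from (\ref{PNa}) together with the conservation identity (\ref{conu1}) for (\ref{pneg1-a}). The minor variations (norm equivalence of $\vert\cdot\vert_h$ and $\Vert\cdot\Vert_0$ instead of the pointwise inequality $(\Pi^h u)^2\leq \Pi^h(u^2)$, and the concavity argument $\vert u^n_{\varepsilon-}\vert\leq\vert\Pi^h u^n_{\varepsilon-}\vert$ instead of passing through $\Pi^h\vert u^n_\varepsilon\vert$) are cosmetic and equally valid.
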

\begin{proof}
First, using the inequality $s(ln\; s - 1)\leq s^2$ for all $s>0$ (which implies $F_\varepsilon(s)\leq C(s^2 + 1)$ for all $s\geq 0$) and taking into account that $(u^{0}_h,{v}_h^{0})=(Q^h u_0,R^h {v}_{0})$, $u_0\geq 0$ (and therefore, $u^{0}_h\geq 0$), as well as the definition of $F_\varepsilon$, we have
\begin{eqnarray}\label{ee5}
&\mathcal{E}^h_\varepsilon(u^0_h,v^0_h)&\!\!\!= \int_\Omega \Pi^h (F_\varepsilon(u^0_h)) +  \frac{1}{2}\Vert \nabla v^0_h\Vert_{0}^{2} \leq C\int_\Omega \Pi^h ((u^0_h)^2 + 1) +  \frac{1}{2}\Vert \nabla v^0_h\Vert_{0}^{2}\nonumber\\
&&\!\!\! \leq C(\Vert u^0_h\Vert_0^2 + \Vert \nabla v^0_h\Vert_{0}^{2} + 1) \leq C(\Vert u_0\Vert_0^2 + \Vert v_0\Vert_{1}^{2} + 1)\leq C_0,
\end{eqnarray}
with the constant $C_0>0$ depending on the data $(\Omega,u_0, v_0)$, but independent of $k,h,n$ and $\varepsilon$. Therefore,  from the discrete energy law (\ref{deluv}) and (\ref{ee5}), we have
\begin{equation}\label{nolin2}
\mathcal{E}^h_\varepsilon(u^n_\varepsilon,v^n_\varepsilon)+k \underset{m=1}{\overset{n}{\sum}}\left( \varepsilon\Vert \nabla u^m_\varepsilon\Vert_0^2 +\Vert (A_h-I) v^m_\varepsilon\Vert_0^2 + \Vert \nabla v^m_\varepsilon\Vert_0^2\right) \leq  \mathcal{E}^h_\varepsilon(u^0_h,v^0_h)\leq C_0. \ \ \ \ \
\end{equation}
Thus, from (\ref{weak02UV}) and (\ref{nolin2}) we conclude (\ref{weak01uv-a}). Moreover, adding (\ref{deluv}) from $m=n_0+1$ to $m=n+n_0$, and using (\ref{estW16}) and (\ref{weak01uv-a}), we deduce (\ref{weak01uv-b}). By other hand, if $\varepsilon\in (0,e^{-2})$, from (\ref{PNa})$_2$ and taking into account that $F_\varepsilon (s)\geq 0$ for all $s\in \mathbb{R}$, we have $\frac{1}{2\varepsilon} (u^n_{\varepsilon-}(\x))^2 \leq F_\varepsilon (u^n_\varepsilon(\x))$ for all $u^n_\varepsilon \in U_h$; and therefore, using that $(\Pi^h (u))^2\leq \Pi^h(u^2)$ for all $u\in C(\overline{\Omega})$,  we have
\begin{equation*}
\frac{1}{2\varepsilon}\int_\Omega (\Pi^h (u^n_{\varepsilon-}))^2 \leq \frac{1}{2\varepsilon}\int_\Omega \Pi^h ((u^n_{\varepsilon-})^2) \leq  \int_\Omega \Pi^h (F_\varepsilon(u^n_\varepsilon)) \leq C_0,
\end{equation*}
where in the last inequality (\ref{weak01uv-a}) was used. Thus, we obtain (\ref{pneg1-a})$_1$. Finally, considering $u^n_{\varepsilon+}:=\max\{u^n_\varepsilon, 0 \}\geq 0$, taking into account that $ u^n_\varepsilon = u^n_{\varepsilon+} + u^n_{\varepsilon-}$ and $\vert u^n_\varepsilon\vert = u^n_{\varepsilon+} - u^n_{\varepsilon-}= u^n_{\varepsilon} - 2u^n_{\varepsilon-}$, using the H\"older and Young inequalities as well as (\ref{conu1}) and (\ref{pneg1-a})$_1$, we have
\begin{equation*}
\int_\Omega \vert u^n_\varepsilon\vert \leq \int_\Omega \Pi^h \vert u^n_\varepsilon\vert =  \int_\Omega u^n_\varepsilon - 2 \int_\Omega \Pi^h (u^n_{\varepsilon-}) \leq m_0 + C\Big( \int_\Omega (\Pi^h (u^n_{\varepsilon-}))^2\Big)^{1/2} \leq m_0+C\sqrt{\varepsilon},
\end{equation*}
which implies (\ref{pneg1-a})$_2$.
\end{proof}
\begin{obs}\label{RUe}
The $l^\infty(L^1)$-norm is the only norm in which $u^n_\varepsilon$ is bounded independently of $(k,h)$ and $\varepsilon$ (see (\ref{pneg1-a})$_2$).  However, we can also obtain $\varepsilon$-dependent bounds for $u^n_\varepsilon$. In fact, from (\ref{PNa}) and taking into account that $\varepsilon\in (0,e^{-2})$, we can deduce that $\displaystyle\frac{\varepsilon}{2}s^2\leq F_\varepsilon(s) + 2$ for all $s\in \mathbb{R}$, which together with (\ref{weak01uv-a}), implies that $(\sqrt{\varepsilon}\; u^n_\varepsilon)$ is bounded in $l^\infty(L^2) \cap l^2(H^1)$. 
\end{obs}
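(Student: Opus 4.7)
The plan is to verify the two assertions of the remark in turn: first the pointwise inequality $\frac{\varepsilon}{2}s^2\le F_\varepsilon(s)+2$ for every $s\in\mathbb{R}$, and then the $\varepsilon$-weighted norm bound that follows from it together with the energy estimate (\ref{weak01uv-a}).

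For the pointwise inequality I would split on the sign of $s$. For $s\geq 0$, the bound is literally (\ref{PNa})$_1$ rearranged. For $s\leq 0$, I would use (\ref{PNa})$_2$, observing that the assumption $\varepsilon\in(0,e^{-2})\subset(0,1)$ gives $\varepsilon\leq \varepsilon^{-1}$, so
\begin{equation*}
\tfrac{\varepsilon}{2}s^2\leq \tfrac{1}{2\varepsilon}s^2\leq F_\varepsilon(s)\leq F_\varepsilon(s)+2.
\end{equation*}
Combining the two cases yields the inequality for all $s\in\mathbb{R}$.

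Next I would evaluate this inequality at each mesh vertex with $s=u^n_\varepsilon(p_j)$ and interpolate: since $\Pi^h$ is linear and preserves pointwise order at the vertices,
\begin{equation*}
\tfrac{\varepsilon}{2}\,\Pi^h\!\big((u^n_\varepsilon)^2\big)\leq \Pi^h\!\big(F_\varepsilon(u^n_\varepsilon)\big)+2.
\end{equation*}
Integrating over $\Omega$ gives $\tfrac{\varepsilon}{2}\,|u^n_\varepsilon|_h^{\,2}\leq (F_\varepsilon(u^n_\varepsilon),1)^h + 2|\Omega|$, and the bound $(F_\varepsilon(u^n_\varepsilon),1)^h\leq C_0$ from (\ref{weak01uv-a}) then controls the right-hand side uniformly in $k$, $h$, $n$ and $\varepsilon$. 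Invoking the uniform equivalence of $|\cdot|_h$ and $\|\cdot\|_0$ on $U_h$ (Remark \ref{eqh2}) converts this into $\varepsilon\,\|u^n_\varepsilon\|_0^2\leq C$, i.e.\ $(\sqrt{\varepsilon}\,u^n_\varepsilon)$ bounded in $l^\infty(L^2)$.

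Finally, for the $l^2(H^1)$ part, the dissipation estimate (\ref{weak01uv-a}) already yields $k\sum_{m=1}^n \varepsilon\,\|\nabla u^m_\varepsilon\|_0^2\leq C_0$, i.e.\ $(\sqrt{\varepsilon}\,\nabla u^n_\varepsilon)$ bounded in $l^2(L^2)$; combined with the just-proved $l^\infty(L^2)$ bound on $\sqrt{\varepsilon}\,u^n_\varepsilon$ (which on any finite time interval embeds into $l^2(L^2)$), this gives $(\sqrt{\varepsilon}\,u^n_\varepsilon)$ bounded in $l^2(H^1)$. No step here is really an obstacle; the only subtlety worth flagging is that the $L^2$ bound passes through the lumped inner product, so the equivalence in Remark \ref{eqh2} is essential to absorb the mass-lumping into a constant independent of $h$ and $\varepsilon$.
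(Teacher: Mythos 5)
Your proposal is correct and follows essentially the same route the paper intends: the pointwise bound $\frac{\varepsilon}{2}s^2\le F_\varepsilon(s)+2$ obtained by splitting on the sign of $s$ in (\ref{PNa}), evaluated at the vertices and integrated through the lumped product $(\cdot,\cdot)^h$, controlled by $(F_\varepsilon(u^n_\varepsilon),1)^h\le C_0$ from (\ref{weak01uv-a}), with Remark \ref{eqh2} converting $\vert\cdot\vert_h$ into $\Vert\cdot\Vert_0$, and the $l^2(H^1)$ part read off from the dissipation term $k\sum_m\varepsilon\Vert\nabla u^m_\varepsilon\Vert_0^2\le C_0$. The only detail worth noting is that the paper's equivalent $H^1$-norm $\Vert u\Vert_1^2=\Vert\nabla u\Vert_0^2+(\int_\Omega u)^2$ together with mass conservation (\ref{conu1}) gives the $l^2(H^1)$ bound without passing through the finite-time embedding, but your argument is equally valid.
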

\begin{obs}\label{NNuh}{\bf (Approximated positivity)}
\begin{enumerate}
\item From (\ref{pneg1-a})$_1$, the following estimate holds
$$\max_{n\geq 0} \Vert \Pi^h (u^n_{\varepsilon-})\Vert_0^2 \leq C_0\varepsilon.$$ 
\item Assuming $V_h$ furnished by $\mathbb{P}_1$-continuous FE and considering the following approximation for the $v$-equation:
\begin{equation}\label{VV}
(\delta_t {v}^n_\varepsilon,\bar{v})^h +(\widetilde{A}_h  v^n_\varepsilon, \bar{v})^h - (u^{n}_\varepsilon ,\bar{v})^h =0,\ \ \forall
\bar{v}\in V_h,
\end{equation}
where $\widetilde{A}_h:V_h \rightarrow V_h$ is the operator defined by $(\widetilde{A}_h  v_h, \bar{v})^h=(\nabla v_h,\nabla \bar{v}) + (v_h,\bar{v})^h$ for all $\bar{v}\in V_h$, then the unconditional energy-stability also holds and the following estimates are satisfied 
\begin{equation}\label{mm1}
\max_{n\geq 0} \Vert \Pi^h (v^n_{\varepsilon-})\Vert_0^2 \leq C\varepsilon \ \ \mbox{ and } \ \  k \underset{m=1}{\overset{n}{\sum}}  \Vert \Pi^h (v^n_{\varepsilon-})\Vert_1^2\leq C \varepsilon (kn),
\end{equation}
where the constant $C$ is independent of $k,h,n$ and $\varepsilon$. In fact, testing by $\bar{v}=\Pi^h (v^n_{\varepsilon-}) \in V_h$ in (\ref{VV}), taking into account that $(\nabla \Pi^h (v^n_{\varepsilon+}), \nabla \Pi^h (v^n_{\varepsilon-}))\geq 0$ (owing to the interior angles of the triangles or tetrahedra are less than or equal to $\pi/2$), and using again that $(\Pi^h (v))^2\leq \Pi^h(v^2)$ for all $v\in C(\overline{\Omega})$, we have  
\begin{eqnarray*}
&\displaystyle\Big(\frac{1}{k} + 1 \Big) \Vert \Pi^h (v^n_{\varepsilon-})\Vert_0^2&\!\!\! + \Vert \nabla \Pi^h (v^n_{\varepsilon-})\Vert_0^2 \leq \int_\Omega \Pi^h\left[ \Big(u^n_\varepsilon + \frac{1}{k} v^{n-1}_\varepsilon\Big) v^n_{\varepsilon-}\right]\nonumber\\
&&\!\!\!\leq \displaystyle\int_\Omega \Pi^h\left[ \Big(u^n_{\varepsilon-} + \frac{1}{k} v^{n-1}_{\varepsilon-}\Big) v^n_{\varepsilon-}\right]\nonumber\\
&& \!\!\! \leq \displaystyle\frac{1}{2}\left(\frac{1}{k}+1\right) \Vert \Pi^h (v^n_{\varepsilon-})\Vert_0^2 + \frac{1}{2} \Vert \Pi^h (u^n_{\varepsilon-})\Vert_0^2 + \frac{1}{2k} \Vert \Pi^h (v^{n-1}_{\varepsilon-})\Vert_0^2,
\end{eqnarray*}
from which, using (\ref{pneg1-a})$_1$,  we arrive at
\begin{equation}\label{mm2}
\displaystyle\frac{1}{2}\Big(\frac{1}{k} + 1 \Big) \Vert \Pi^h (v^n_{\varepsilon-})\Vert_0^2+ \Vert \nabla \Pi^h (v^n_{\varepsilon-})\Vert_0^2 \leq \frac{1}{2}C_0\varepsilon + \frac{1}{2k} \Vert \Pi^h (v^{n-1}_{\varepsilon-})\Vert_0^2.
\end{equation}
Therefore, if $v^0_h\geq 0$ (which holds for instance by considering $v^0_h=\widetilde{R}^h v_0$, where $\widetilde{R}^h$  is an average interpolator of Clement or Scott-Zhang type, and using that $v_0\geq 0$), using Lemma \ref{tmaD} in (\ref{mm2}), we conclude (\ref{mm1})$_1$. Finally, multiplying (\ref{mm2}) by $k$ and adding from $m=1$ to $m=n$, and using again that ${v}_h^{0}\geq 0$, we arrive at (\ref{mm1})$_2$.
 \end{enumerate}
\end{obs}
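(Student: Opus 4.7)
Part~1 of the remark is a direct rewriting of estimate (\ref{pneg1-a})$_1$: that bound is already uniform in $n$, so taking the maximum over $n\geq 0$ immediately gives $\max_n \Vert \Pi^h(u^n_{\varepsilon-})\Vert_0^2 \leq C_0\varepsilon$. For part~2, I accept as a preliminary (it is asserted in the remark) that the mass-lumped $v$-scheme (\ref{VV}) still admits the discrete energy law (\ref{deluv}) after adapting the $v$-test function to $(\widetilde{A}_h - I)v^n_\varepsilon$, so that the companion control $\Vert\Pi^h(u^n_{\varepsilon-})\Vert_0^2 \leq C_0\varepsilon$ remains valid. The rest of the argument is exactly what the excerpt hints at: test (\ref{VV}) with $\bar{v} = \Pi^h(v^n_{\varepsilon-})\in V_h$ and exploit sign-preserving nodal decompositions.

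\textbf{Key discrete-sign ingredients to deploy.} Because $V_h$ is $\mathbb{P}_1$-continuous and $v^n_\varepsilon\in V_h$ is determined by its nodal values, we have the decomposition $v^n_\varepsilon = \Pi^h(v^n_{\varepsilon+}) + \Pi^h(v^n_{\varepsilon-})$ inside $V_h$. At each vertex at most one of $v^n_{\varepsilon\pm}$ is nonzero, so $(\Pi^h(v^n_{\varepsilon+}), \Pi^h(v^n_{\varepsilon-}))^h = 0$ and hence
\begin{equation*}
(v^n_\varepsilon, \Pi^h(v^n_{\varepsilon-}))^h = |\Pi^h(v^n_{\varepsilon-})|_h^2 \sim \Vert \Pi^h(v^n_{\varepsilon-})\Vert_0^2
\end{equation*}
by the norm equivalence of Remark~\ref{eqh2}. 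The non-obtuse hypothesis (\textbf{H}) makes the $\mathbb{P}_1$ stiffness matrix an M-matrix (non-positive off-diagonal entries), which forces $(\nabla \Pi^h(v^n_{\varepsilon+}), \nabla \Pi^h(v^n_{\varepsilon-})) \geq 0$ and therefore $(\nabla v^n_\varepsilon, \nabla \Pi^h(v^n_{\varepsilon-})) \geq \Vert \nabla \Pi^h(v^n_{\varepsilon-})\Vert_0^2$. For the source and memory terms, writing $u^n_\varepsilon = u^n_{\varepsilon+} + u^n_{\varepsilon-}$ and $v^{n-1}_\varepsilon = \Pi^h(v^{n-1}_{\varepsilon+}) + \Pi^h(v^{n-1}_{\varepsilon-})$ nodally, the mass-lumped pairings of any nodally non-negative function with $\Pi^h(v^n_{\varepsilon-})\leq 0$ are non-positive, so only the negative-part interactions survive:
\begin{equation*}
\bigl(u^n_\varepsilon + k^{-1}v^{n-1}_\varepsilon,\; \Pi^h(v^n_{\varepsilon-})\bigr)^h \leq \bigl(\Pi^h(u^n_{\varepsilon-}) + k^{-1}\Pi^h(v^{n-1}_{\varepsilon-}),\; \Pi^h(v^n_{\varepsilon-})\bigr)^h.
\end{equation*}

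\textbf{Conclusion.} Collecting the three ingredients, applying Cauchy--Schwarz and Young on the right, and invoking the $\varepsilon$-bound (\ref{pneg1-a})$_1$ for $\Pi^h(u^n_{\varepsilon-})$ leads exactly to the recurrence (\ref{mm2}),
\begin{equation*}
\tfrac{1}{2}(k^{-1}+1)\Vert\Pi^h(v^n_{\varepsilon-})\Vert_0^2 + \Vert\nabla\Pi^h(v^n_{\varepsilon-})\Vert_0^2 \leq \tfrac{1}{2}C_0\varepsilon + \tfrac{1}{2k}\Vert\Pi^h(v^{n-1}_{\varepsilon-})\Vert_0^2.
\end{equation*}
Dropping the gradient contribution and rearranging as $(1+k)\Vert\Pi^h(v^n_{\varepsilon-})\Vert_0^2 \leq \Vert\Pi^h(v^{n-1}_{\varepsilon-})\Vert_0^2 + C_0 k\varepsilon$, Lemma~\ref{tmaD} with $\delta=1$ and $\beta=C_0\varepsilon$ applied to $d^n:=\Vert\Pi^h(v^n_{\varepsilon-})\Vert_0^2$ yields (\ref{mm1})$_1$; the initial datum $\Pi^h(v^0_{h-})=0$ is guaranteed because $v^0_h$ is built using a nodally-nonneg-preserving interpolant (Cl\'ement or Scott--Zhang) of the non-negative $v_0$. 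Multiplying the recurrence by $k$ and telescoping in $m=1,\dots,n$ absorbs the gradient terms on the left and produces (\ref{mm1})$_2$.

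\textbf{Main obstacle.} The critical step is the sign of the gradient cross-term: without the non-obtuse mesh constraint~(\textbf{H}) one cannot show $(\nabla\Pi^h(v^n_{\varepsilon+}),\nabla\Pi^h(v^n_{\varepsilon-}))\geq 0$, and the nodal-splitting strategy collapses. A secondary subtlety is tracking the constants under the mass-lumped/$L^2$ norm equivalence of Remark~\ref{eqh2} when iterating the inequality: one must verify that the equivalence constants do not interfere with the factor $(1+k)$ needed to apply Lemma~\ref{tmaD} with $\delta=1$.
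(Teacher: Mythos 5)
Your argument is correct and follows essentially the same route as the paper: test \eqref{VV} with $\bar v=\Pi^h(v^n_{\varepsilon-})$, use the non-obtuseness of the mesh for the sign of the gradient cross-term, discard the nonnegative-part contributions of $u^n_\varepsilon$ and $v^{n-1}_\varepsilon$, apply Young and \eqref{pneg1-a}$_1$ to reach the recurrence \eqref{mm2}, then invoke Lemma~\ref{tmaD} and telescope. The only detail worth noting is that the paper avoids the norm-equivalence constant you flag by using the pointwise bound $(\Pi^h v)^2\le\Pi^h(v^2)$ to pass from $(v^n_\varepsilon,\Pi^h(v^n_{\varepsilon-}))^h$ to $\Vert\Pi^h(v^n_{\varepsilon-})\Vert_0^2$ with constant exactly one, which keeps the factor $(1+k)$ clean for Lemma~\ref{tmaD} with $\delta=1$.
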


\subsection{Well-posedness}
In this subsection, we will prove the well-posedness of the scheme \textbf{UV}. We recall that, taking into account that we remain in finite dimension, all norms are equivalents.
\begin{tma}\label{UVsolv} {\bf (Unconditional existence)} 
There exists at least one solution $(u^n_\varepsilon,v^n_\varepsilon)$ of the scheme \textbf{UV}.
\end{tma}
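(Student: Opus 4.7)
I would prove existence via the Leray--Schauder fixed point theorem applied in the finite-dimensional space $U_h\times V_h$.

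First, I would define the linearization map $T:U_h\times V_h \to U_h\times V_h$ by $T(\bar u,\bar v)=(u,v)$, where $(u,v)$ is the unique solution of the decoupled linear problem
\begin{align*}
\tfrac{1}{k}(u,\hat u)^h + (\nabla u,\nabla \hat u) &= \tfrac{1}{k}(u^{n-1},\hat u)^h - (\Lambda_\varepsilon(\bar u)\nabla \bar v,\nabla \hat u),\quad \forall\,\hat u\in U_h,\\
\tfrac{1}{k}(v,\hat v) + (A_h v,\hat v) &= \tfrac{1}{k}(v^{n-1},\hat v) + (\bar u,\hat v),\quad \forall\,\hat v\in V_h.
\end{align*}
Both subproblems have coercive symmetric bilinear forms on finite-dimensional spaces, so $T$ is well defined, and its fixed points are precisely the solutions of scheme \textbf{UV}. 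Continuity of $T$ follows from that of $\Lambda_\varepsilon$ (Lemma \ref{lemconv}) and continuous dependence of linear problems on their data.

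Next, I would verify the Leray--Schauder hypothesis: a uniform bound, over $\lambda\in[0,1]$, for any $(u,v)$ with $(u,v)=\lambda\,T(u,v)$. Such a pair satisfies
\begin{align*}
\tfrac{1}{k}(u-\lambda u^{n-1},\hat u)^h + (\nabla u,\nabla \hat u) + \lambda(\Lambda_\varepsilon(u)\nabla v,\nabla \hat u) &= 0,\\
\tfrac{1}{k}(v-\lambda v^{n-1},\hat v) + (A_h v,\hat v) - \lambda(u,\hat v) &= 0.
\end{align*}
The crucial point is that the chemotactic and production terms both carry the same factor $\lambda$, so testing with $\hat u=\Pi^h F'_\varepsilon(u)$ and $\hat v=(A_h-I)v$ produces exactly the cancellation used in the proof of Theorem \ref{estinc1uv}, via (\ref{PL1}) and the inclusion $U_h\subset V_h$. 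Reproducing the energy computation leading to (\ref{deluv}) and controlling the extra homotopy terms $(1-\lambda)(u^{n-1},\Pi^h F'_\varepsilon(u))^h$ and $(1-\lambda)(\nabla v^{n-1},\nabla v)$ by Young's inequality yields
$$
(F_\varepsilon(u),1)^h + \tfrac{1}{2}\|\nabla v\|_0^2 \le C(k,\varepsilon,u^{n-1},v^{n-1}).
$$
Combined with the coercivity bound $F_\varepsilon(s)\ge\tfrac{\varepsilon}{2}s^2-2$ from (\ref{PNa}), the spectral lower bound (\ref{D}), and the mass identities obtained by testing with $\hat u=1$ and $\hat v=1$, this delivers a bound for $(u,v)$ in $U_h\times V_h$ independent of $\lambda$.

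Leray--Schauder then yields a fixed point of $T$, which is the desired solution of scheme \textbf{UV}. The main obstacle is handling the non-cancellable homotopy term $(1-\lambda)(u^{n-1},\Pi^h F'_\varepsilon(u))^h$: because $F'_\varepsilon$ grows only linearly (with Lipschitz constant $1/\varepsilon$), this term is of order $|u|_h$, and must be absorbed into the quadratic lower bound $\tfrac{\varepsilon}{2}|u|_h^2$ coming from the strict convexity of the regularized entropy $F_\varepsilon$. Once this absorption is carried out, the remaining arguments parallel those of Theorem \ref{estinc1uv}, and existence is obtained unconditionally in $k$ and $h$.
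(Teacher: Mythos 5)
Your proposal is correct and follows essentially the same route as the paper: Leray--Schauder applied to exactly the same decoupled linearized map, with well-definedness by Lax--Milgram, continuity via Lemma \ref{lemconv}, and the $\lambda$-independent a priori bound for fixed points of $\lambda R$ obtained from the same test functions $\Pi^h F'_\varepsilon(u)$ and $(A_h-I)v$ and the same cancellation of the chemotactic and production terms. The only (cosmetic) difference is in the source term: the paper uses the convexity inequality $(u-\lambda u^{n-1},F'_\varepsilon(u))^h\geq (F_\varepsilon(u),1)^h-(F_\varepsilon(\lambda u^{n-1}),1)^h$, whereas you split off the homotopy remainder and absorb it via Young's inequality, the Lipschitz bound on $F'_\varepsilon$ and the quadratic lower bound (\ref{PNa}) --- both yield the required bound.
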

\begin{proof}
We will use the Leray-Schauder fixed point theorem. With this aim, given $(u^{n-1}_\varepsilon,v^{n-1}_\varepsilon)\in U_h\times V_h$, we define the operator $R:U_h\times V_h\rightarrow U_h\times V_h$ by  $R(\widetilde{u},\widetilde{v})=(u,{v})$, such that $(u,v)\in U_h\times V_h$ solves the following linear decoupled problem
\begin{equation}\label{modelfexis01-uvA}
u\in U_h \ \ \mbox{s.t. } \ \displaystyle\frac{1}{k}(u,\bar{u})^h + (\nabla u, \nabla\bar{u}) =\displaystyle\frac{1}{k}(u^{n-1}_\varepsilon,\bar{u})^h -(\Lambda_\varepsilon(\widetilde{u})\nabla \widetilde{v},\nabla \bar{u}), \ \ \forall \bar{u}\in U_h,
\end{equation}
\begin{equation}\label{modelfexis01-uvB}
v\in V_h \ \ \mbox{s.t. } \ \displaystyle\frac{1}{k}(v,\bar{v}) + (A_h v, \bar{v}) =\displaystyle\frac{1}{k}(v^{n-1}_\varepsilon,\bar{v})
+(\widetilde{u}, \bar{v}), \ \ \forall \bar{v}\in V_h.
\end{equation}
\begin{enumerate}
\item{$R$ is well defined}. Applying the Lax-Milgram theorem to (\ref{modelfexis01-uvA}) and (\ref{modelfexis01-uvB}), we can deduce that, for each $(\widetilde{u},\widetilde{v})\in U_h\times V_h$, there exists a unique $(u,{v})\in U_h \times V_h$ solution of (\ref{modelfexis01-uvA})-(\ref{modelfexis01-uvB}).
\item{Let us now prove that all possible fixed points of $\lambda R$ (with $\lambda \in (0,1]$) are bounded.}
In fact, observe that if $(u,{v})$ is a fixed point of $\lambda R$, then $R(u,v) = (\frac{1}{\lambda} u, \frac{1}{\lambda}{v})$, and therefore $(u,{v})$ satisfies the coupled system
\begin{equation}\label{eq010}
\left\{
\begin{array}
[c]{lll}%
\displaystyle \displaystyle\frac{1}{k} (u,\bar{u})^h+(\nabla u, \nabla\bar{u})+\lambda(\Lambda_\varepsilon(u)\nabla v,\nabla \bar{u})=\displaystyle\frac{\lambda}{k}(u^{n-1}_\varepsilon,\bar{u})^h ,\ \ \forall \bar{u}\in U_h, \vspace{0,2 cm}\\
\displaystyle\frac{1}{k} (v,\bar{v}) + (A_h v,\bar{v}) - \lambda (u, \bar{v})=
\displaystyle\frac{\lambda}{k}(v^{n-1}_\varepsilon,\bar{v}), \ \ \forall \bar{v}\in  V_h.
\end{array}
\right. 
\end{equation}
Then, testing (\ref{eq010})$_1$ and (\ref{eq010})$_2$ by $\bar{u}= \Pi^h (F'_\varepsilon (u))$ and $\bar{v}=(A_h -I) v$ respectively, proceeding as in Theorem \ref{estinc1uv} and taking into account that $\lambda \in (0,1]$, we obtain
\begin{eqnarray}\label{pfac1}
&\displaystyle(F_\varepsilon(u),1)^h+ \frac{1}{2}\Vert \nabla v\Vert_{0}^{2}&\!\!\!\!+k \left(\varepsilon \Vert\nabla u\Vert_0^2 +\Vert (A_h-I) v\Vert_0^2 + \Vert \nabla v\Vert_0^2\right) \nonumber\\
&&\!\!\!\!\leq  (F_\varepsilon(\lambda u^{n-1}_\varepsilon),1)^h + \frac{\lambda^2}{2} \Vert \nabla {v}^{n-1}_\varepsilon\Vert_{0}^{2}\leq C(u^{n-1}_\varepsilon, {v}^{n-1}_\varepsilon),
\end{eqnarray}
where the last estimate is $\lambda$-independent (arguing as in (\ref{ee5})). Moreover, proceeding as in Lemma \ref{LemvL1} and Corollary \ref{welem} (taking into account (\ref{pfac1})), we deduce $\Vert (u,v)\Vert_{L^1\times H^1}\leq C$, where the constant $C$ depends on data $(\Omega, u^{n-1}_\varepsilon, {v}^{n-1}_\varepsilon,\varepsilon)$, but it is independent of $\lambda$ and $h$.

\item{We  prove that $R$ is continuous.} Let $\{(\widetilde{u}^l,\widetilde{v}^l)\}_{l\in\mathbb{N}}\subset U_h\times V_h\hookrightarrow W^{1,\infty}(\Omega)^2$ be a sequence such that 
\begin{equation}\label{c001-uv}
(\widetilde{u}^l,\widetilde{v}^l)\rightarrow (\widetilde{u},\widetilde{v}) \ \mbox{ in }  U_h\times V_h
\quad \hbox{as $l\to +\infty$}.
\end{equation}
In particular, since we remain in finite dimension, $\{(\widetilde{u}^l,\widetilde{v}^l)\}_{l\in\mathbb{N}}$ is bounded in $W^{1,\infty}(\Omega)^2$. Then, if we denote $(u^l,v^l)=R(\widetilde{u}^l,\widetilde{v}^l)$, we can deduce
\begin{eqnarray*}
\label{fps02}
&\displaystyle
\frac{1}{2k}\Vert (u^l, v^l)\Vert_{0}^{2}&\!\!\!+\frac{1}{2}\Vert \nabla u^l\Vert_{0}^{2} +\frac{1}{2}\Vert v^l \Vert_1^2 \nonumber\\
&&\!\!\!\!\!\!\!\!\!\!\!\!\!\leq \displaystyle
\frac{1}{2k}\Vert (u^{n-1}_\varepsilon, v^{n-1}_\varepsilon)\Vert_{0}^{2} +  (1 +\Vert \widetilde{u}^l\Vert_1^2)^{2/r} \Vert \nabla \widetilde{v}^l \Vert_{L^\infty}^2+ C\Vert \widetilde{u}^l\Vert_{0}^2 \leq C,
\end{eqnarray*}
where in the first inequality  (\ref{estlambda}) was used and  $C$ is a constant independent of $l\in\mathbb{N}$. Therefore, $\{(u^l,v^l)=R(\widetilde{u}^l,\widetilde{v}^l)\}_{l\in\mathbb{N}}$ is bounded in $U_h\times V_h\hookrightarrow W^{1,\infty}(\Omega)^2$. Then, there exists a subsequence of  $\{R(\widetilde{u}^l,\widetilde{\boldsymbol \sigma}^l)\}_{l\in\mathbb{N}}$, still denoted by$\{R(\widetilde{u}^{l},\widetilde{\boldsymbol \sigma}^{l})\}_{l\in\mathbb{N}}$,  such that 
\begin{equation}\label{c002-uv}
R(\widetilde{u}^{l},\widetilde{v}^{l})\rightarrow (u',{v}') \ \ \mbox{ in } \ W^{1,\infty}(\Omega)^2, \quad \hbox{as $l\to +\infty$}.
\end{equation}
Then, from (\ref{c001-uv})-(\ref{c002-uv}) and using Lemma \ref{lemconv}, a standard procedure allows us to pass to the limit, as $l$ goes to $+\infty$, in (\ref{modelfexis01-uvA})-(\ref{modelfexis01-uvB}) (with $(\widetilde{u}^l,\widetilde{v}^l)$ and $(u^l, v^l)$ instead of $(\widetilde{u},\widetilde{v})$ and $(u,v)$ respectively), and we deduce that $R(\widetilde{u},\widetilde{v})=({u}',{v}')$. Therefore, we have proved that any convergent subsequence of  $\{R(\widetilde{u}^l,\widetilde{v}^l)\}_{l\in\mathbb{N}}$ converges to $R(\widetilde{u},\widetilde{v})$ in $U_h\times V_h$, and from uniqueness of $R(\widetilde{u},\widetilde{v})$, we conclude that the whole sequence $R(\widetilde{u}^l,\widetilde{v}^l)\rightarrow R(\widetilde{u},\widetilde{v})$ in $U_h\times V_h$. Thus, $R$ is continuous.
\end{enumerate}
Therefore, the hypotheses of the Leray-Schauder fixed point theorem (in finite dimension) are satisfied and we conclude that the map $R$ has a fixed point $(u,v)$, that is
$R(u,{v})=(u,{v})$, which is a solution of the scheme \textbf{UV}. 
\end{proof}

\begin{lem}{\bf (Conditional uniqueness)}
If $k\, g(h,\varepsilon)<1$ (where $g(h,\varepsilon)\uparrow +\infty$ as $h\downarrow 0$ or $\varepsilon\downarrow 0$),  then the solution $(u^n_\varepsilon,v^n_\varepsilon)$ of the scheme \textbf{UV} is unique.
\end{lem}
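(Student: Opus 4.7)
The plan is to assume two solutions $(u_i^n,v_i^n) \in U_h \times V_h$, $i=1,2$, of scheme \textbf{UV} with the same previous data $(u^{n-1}_\varepsilon,v^{n-1}_\varepsilon)$, set $U := u_1^n - u_2^n \in U_h$ and $V := v_1^n - v_2^n \in V_h$, subtract the two copies of (\ref{modelf02a}), and show $(U,V)=(0,0)$ under the smallness condition on $k$. Note that by mass conservation $(U,1)=0$, and the chemotactic difference splits as
\begin{equation*}
\Lambda_\varepsilon(u_1^n)\nabla v_1^n - \Lambda_\varepsilon(u_2^n)\nabla v_2^n \; = \; \Lambda_\varepsilon(u_1^n)\nabla V \, + \, \bigl(\Lambda_\varepsilon(u_1^n)-\Lambda_\varepsilon(u_2^n)\bigr)\nabla v_2^n.
\end{equation*}

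First I would test the subtracted $v$-equation with $\bar v = (A_h - I)V$, exactly as in the energy-stability proof (Theorem \ref{estinc1uv}). Using the identities $(V,(A_h-I)V)=\|\nabla V\|_0^2$ and $(A_h V,(A_h-I)V)=\|(A_h-I)V\|_0^2 + \|\nabla V\|_0^2$, together with Cauchy--Schwarz and Young on the coupling term $(U,(A_h-I)V)$, this yields the control
\begin{equation*}
\tfrac{1}{k}\|\nabla V\|_0^2 + \tfrac{1}{2}\|(A_h-I)V\|_0^2 + \|\nabla V\|_0^2 \;\leq\; \tfrac{1}{2}\|U\|_0^2,
\end{equation*}
so in particular $\|\nabla V\|_0^2 \leq \tfrac{1}{2}\|U\|_0^2$.

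Next I would test the subtracted $u$-equation with $\bar u = U$. For the first chemotactic contribution I use (\ref{D}) to get $|(\Lambda_\varepsilon(u_1^n)\nabla V,\nabla U)| \leq \varepsilon^{-1}\|\nabla V\|_0\|\nabla U\|_0$, and Young's inequality to absorb $\|\nabla U\|_0^2$ at the cost of $\varepsilon^{-2}\|\nabla V\|_0^2$, which by the $V$-bound above is controlled by $\tfrac12\varepsilon^{-2}\|U\|_0^2$. For the second contribution, estimate (\ref{eqL}) gives the pointwise bound $\|\Lambda_\varepsilon(u_1^n)-\Lambda_\varepsilon(u_2^n)\|_{L^\infty} \leq C\varepsilon^{-2}\|U\|_{L^\infty}$; combining with the inverse inequality $\|U\|_{L^\infty}\leq C h^{-d/2}\|U\|_0$ on the quasi-uniform mesh, the uniform bound $\|\nabla v_2^n\|_0 \leq C$ coming from Corollary \ref{welem}, and Young's inequality, produces
\begin{equation*}
\bigl|\bigl((\Lambda_\varepsilon(u_1^n)-\Lambda_\varepsilon(u_2^n))\nabla v_2^n,\nabla U\bigr)\bigr| \;\leq\; \tfrac14\|\nabla U\|_0^2 + C\,\varepsilon^{-4}h^{-d}\|U\|_0^2.
\end{equation*}

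Putting the two estimates together and using the equivalence $|U|_h^2 \geq c\|U\|_0^2$ from Remark \ref{eqh2}, I would arrive at an inequality of the form
\begin{equation*}
\tfrac{c}{k}\|U\|_0^2 + \tfrac{1}{2}\|\nabla U\|_0^2 \;\leq\; C\bigl(\varepsilon^{-2} + \varepsilon^{-4}h^{-d}\bigr)\|U\|_0^2,
\end{equation*}
so that setting $g(h,\varepsilon) := C c^{-1}(\varepsilon^{-2} + \varepsilon^{-4}h^{-d})$ (which clearly diverges as $h\downarrow 0$ or $\varepsilon\downarrow 0$), the hypothesis $k\,g(h,\varepsilon)<1$ forces $U=0$, and then the $V$-estimate immediately forces $V=0$. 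The only delicate step will be the second chemotactic contribution: the cubic blow-up $\varepsilon^{-2}$ in (\ref{eqL}) combined with the inverse inequality is what produces the $\varepsilon^{-4}h^{-d}$ term and prevents unconditional uniqueness, so the choice of the $L^\infty$-bound on $\Lambda_\varepsilon(u_1^n)-\Lambda_\varepsilon(u_2^n)$ (as opposed to an $L^r$-bound through (\ref{estlambda})) is the crucial bookkeeping decision.
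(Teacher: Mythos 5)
Your argument is correct and follows essentially the same route as the paper: subtract the two copies of the scheme, test with the difference functions, split the chemotactic term into $\Lambda_\varepsilon(u_1^n)\nabla V$ plus $(\Lambda_\varepsilon(u_1^n)-\Lambda_\varepsilon(u_2^n))\nabla v_2^n$, and control the second piece via (\ref{eqL}) combined with an $L^\infty$ inverse inequality and the a priori bound on $\Vert\nabla v_2^n\Vert_0$ from Corollary \ref{welem}, concluding under $k\,g(h,\varepsilon)<1$. The only (harmless) deviations are that you test the $v$-equation with $(A_h-I)V$ instead of $A_h V$ and bound the first chemotactic piece by the uniform spectral estimate (\ref{D}) rather than by $\Vert\Lambda_\varepsilon(u_1^n)\Vert_{L^6}$ through (\ref{estlambda}) and inverse inequalities as the paper does, which merely changes the explicit form of $g(h,\varepsilon)$.
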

\begin{proof}
Suppose that  there exist $(u^{n,1}_\varepsilon,v^{n,1}_\varepsilon),(u^{n,2}_\varepsilon,v^{n,2}_\varepsilon)\in U_h\times  V_h$ two possible solutions of the scheme \textbf{UV}. Then, defining $u=u^{n,1}_\varepsilon-u^{n,2}_\varepsilon$ and $v=v^{n,1}_\varepsilon-v^{n,2}_\varepsilon$,
we have that $(u,v)\in U_h\times  V_h$ satisfies, for all $(\bar{u},\bar{v})\in U_h\times V_h$,
\begin{equation}\label{uniq001v}
\displaystyle\frac{1}{k}(u,\bar{u})^h+ (\nabla u,
\nabla \bar{u})+(\Lambda_\varepsilon(u^{n,1}_\varepsilon) \nabla v,\nabla \bar{u})
+((\Lambda_\varepsilon(u^{n,1}_\varepsilon) - \Lambda_\varepsilon(u^{n,2}_\varepsilon))\nabla v^{n,2}_\varepsilon,\nabla \bar{u})=0,
\end{equation}
\begin{equation}\label{uniq002v}
\displaystyle\frac{1}{k}(v,\bar{v})+ (A_h v,\bar{v}) =(u,\bar{v}).
\end{equation}
Taking $\bar{u}=u$, $\bar{v}=A_h v$ in (\ref{uniq001v})-(\ref{uniq002v}), adding the resulting expressions and using the fact that $\displaystyle\int_\Omega u=0$ and the equivalence of the norms $\Vert \cdot \Vert_0$ and $\vert \cdot \vert_h$ in $U_h$ given in  Remark \ref{eqh2}, we obtain
\begin{eqnarray*}
&\displaystyle\frac{1}{k}&\!\!\!\!\Vert (u,\nabla v)\Vert_{0}^2+ \Vert (u,A_h v)\Vert_{H^1\times L^2}^2 \leq \Vert u\Vert_0\Vert A_h v\Vert_0 \nonumber\\
&&+\Vert \Lambda_\varepsilon(u^{n,1}_\varepsilon)\Vert_{L^6} \Vert \nabla v\Vert_{L^3} \Vert \nabla u\Vert_0 + \Vert \Lambda_\varepsilon(u^{n,1}_\varepsilon) - \Lambda_\varepsilon(u^{n,2}_\varepsilon)\Vert_{L^\infty} \Vert \nabla
v^{n,2}_\varepsilon\Vert_{0} \Vert \nabla u\Vert_{0}\nonumber\\
&&\leq \frac{1}{4} \Vert A_h v\Vert_0 + \Vert u\Vert_0^2 + \frac{1}{4} \Vert \nabla u\Vert_0^2 + \frac{1}{4} \Vert A_h v\Vert_{0}^2 + C \Vert \Lambda_\varepsilon(u^{n,1}_\varepsilon)\Vert_{L^6}^4 \Vert \nabla v\Vert_{0}^2  \nonumber\\
&&+ \frac{1}{4} \Vert \nabla u\Vert_0^2 +  \Vert \Lambda_\varepsilon(u^{n,1}_\varepsilon) - \Lambda_\varepsilon(u^{n,2}_\varepsilon)\Vert_{L^\infty}^2 \Vert \nabla
v^{n,2}_\varepsilon\Vert^2_{0}.
\end{eqnarray*}
Then, taking into account (\ref{estlambda}), (\ref{eqL}), (\ref{weak01uv-a}), (\ref{pneg1-a})$_2$ and using the inverse inequalities: $\Vert u^h \Vert_{L^6}^2\leq C_1(h) \Vert u^h \Vert_{L^r}^2$, $\Vert u^h \Vert_{1}^2\leq C_2(h) \Vert u^h \Vert_{L^1}^2$ and $\Vert u^h \Vert_{L^\infty}^2\leq C_3(h) \Vert u^h \Vert_0^2$ for all $u^h \in U_h$, we have
\begin{eqnarray*}\label{uniq003}
&\displaystyle\Vert &\!\!\!\!\!(u,\nabla v)\Vert_{0}^2+ \frac{k}{2}\Vert (u,A_h v)\Vert_{H^1\times L^2}^2\leq k 
\left(1+ C \Vert \Lambda_\varepsilon(u^{n,1}_\varepsilon)\Vert_{L^6}^4\right)\Vert (u,\nabla v) \Vert_0^2 + k C_0 C\varepsilon^{-2} \Vert u\Vert_{L^\infty}^2\nonumber\\
&& \!\!\! \leq  k 
\left(1+ C_1(h)^2(1 + C_2(h))^{4/r}+  k C_0 C_3(h)\varepsilon^{-2} \right)\Vert (u,\nabla v) \Vert_0^2:= k\,g(h,\varepsilon)\Vert (u,\nabla v) \Vert_0^2.
\end{eqnarray*}
Therefore, if $k\,g(h,\varepsilon)< 1$, we conclude that $u=0$, and therefore (from (\ref{uniq002v})) $v=0$.
\end{proof}

\section{Scheme US}
In this section, we propose another energy-stable nonlinear fully discrete scheme associated to model (\ref{modelf00}), which is obtained by introducing the auxiliary variable ${\boldsymbol\sigma}=\nabla v$. In fact, taking into account the functions $\lambda_\varepsilon$ and $F_\varepsilon$ and its derivatives (given in (\ref{aE})-(\ref{F2pE})), another regularized version of problem (\ref{modelf00}) reads: Find $u_\varepsilon:\Omega\times [0,T]\rightarrow \mathbb{R}$ and ${\boldsymbol\sigma}_\varepsilon:\Omega\times [0,T]\rightarrow \mathbb{R}^d$ such that
\begin{equation}
\left\{
\begin{array}
[c]{lll}%
\partial_t u_\varepsilon -\nabla \cdot(\lambda_\varepsilon (u_\varepsilon) \nabla (F'_\varepsilon(u_\varepsilon))) -\nabla\cdot( u_\varepsilon {\boldsymbol\sigma}_\varepsilon)=0\ \ \mbox{in}\ \Omega,\ t>0,\\
\partial_t {\boldsymbol \sigma}_\varepsilon + \mbox{rot(rot } {\boldsymbol \sigma}_\varepsilon \mbox{)} -\nabla(\nabla \cdot {\boldsymbol \sigma}_\varepsilon) + {\boldsymbol \sigma}_\varepsilon = u_\varepsilon\nabla (F'_\varepsilon(u_\varepsilon))\ \ \mbox{in}\ \Omega,\ t>0,\\
\displaystyle \frac{\partial u_\varepsilon}{\partial \mathbf{n}}=0\ \ \mbox{on}\ \partial\Omega,\ t>0,\\
{\boldsymbol \sigma}_\varepsilon\cdot \mathbf{n}=0, \ \ \left[\mbox{rot }{\boldsymbol \sigma}_\varepsilon \times \mathbf{n}\right]_{tang}=0 \quad \mbox{on}\ \partial\Omega,\ t>0,\\
u_\varepsilon(\x ,0)=u_0(\x )\geq 0,\ {\boldsymbol \sigma}_\varepsilon(\x ,0)=\nabla v_0(
\x ),\quad \mbox{in}\ \Omega.
\end{array}
\right.  \label{modelf02acontUS}
\end{equation}
This kind of formulation considering ${\boldsymbol\sigma}=\nabla v$ as auxiliary variable has been used in the construction of numerical schemes for other chemotaxis models (see for instance \cite{FMD2,Z1}). Once problem (\ref{modelf02acontUS}) is solved, we can recover $v_\varepsilon$ from $u_\varepsilon$ solving
\begin{equation} \left\{
\begin{array}
[c]{lll}%
\partial_t v_\varepsilon -\Delta v_\varepsilon + v_\varepsilon = u_\varepsilon  \quad \mbox{in}\ \Omega,\ t>0,\\
\displaystyle
\frac{\partial v_\varepsilon}{\partial \mathbf{n}}=0\quad\mbox{on}\ \partial\Omega,\ t>0,\\
 v_\varepsilon(\x ,0)=v_0(\x )\geq 0\quad \mbox{in}\ \Omega.
\end{array}
\right.  \label{modelf01eqv}
\end{equation}
Observe that multiplying (\ref{modelf02acontUS})$_1$ by $F'_\varepsilon(u_\varepsilon)$, (\ref{modelf02acontUS})$_2$ by ${\boldsymbol\sigma}_\varepsilon$, and integrating over $\Omega$, we obtain the following energy law
\begin{eqnarray*}
\frac{d}{dt} \displaystyle \int_\Omega \Big( F_\varepsilon(u_\varepsilon) + \frac{1}{2} \vert {\boldsymbol\sigma}_\varepsilon\vert^2\Big) d \x + \int_\Omega \lambda_\varepsilon(u_\varepsilon)\vert \nabla (F'_\varepsilon(u_\varepsilon))\vert^2 d\x+\Vert {\boldsymbol\sigma}_\varepsilon\Vert_1^2 = 0.
\end{eqnarray*}
In particular, the modified energy $\mathcal{E}_\varepsilon(u,{\boldsymbol\sigma})= \displaystyle\int_\Omega \Big( F_\varepsilon(u) + \frac{1}{2} \vert {\boldsymbol\sigma}\vert^2\Big) d \x$ is decreasing in time. 
Then, we consider a fully discrete approximation of the regularized problem (\ref{modelf02acontUS}) using a FE discretization in space and the backward
Euler discretization in time (again considered for simplicity on a uniform partition of $[0,T]$ with time step
$k=T/N : (t_n = nk)_{n=0}^{n=N}$). Concerning the space discretization, we consider the triangulation as in the scheme \textbf{UV}, but in this case without imposing the constraint ({\bf H}) related with the right-angles simplices. We choose the following continuous FE spaces for $u_\varepsilon$, ${\boldsymbol\sigma}_\varepsilon$, and $v_\varepsilon$:
$$(U_h,{\boldsymbol\Sigma}_h, V_h) \subset H^1(\Omega)^3,\quad \hbox{generated by $\mathbb{P}_1,\mathbb{P}_{m},\mathbb{P}_r$ with $m,r\geq 1$.}
$$
Then, we consider the following first order in time, nonlinear and coupled scheme: 
\begin{itemize}
\item{\underline{\emph{Scheme \textbf{US}}:}\\
{\bf Initialization}:  Let $(u_h^{0},{\boldsymbol \sigma}_h^{0})=(Q^h u_0, \widetilde{Q}^h (\nabla v_0))\in  U_h\times {\boldsymbol\Sigma}_h$.\\
{\bf Time step} n: Given $(u^{n-1}_\varepsilon,{\boldsymbol \sigma}^{n-1}_\varepsilon)\in  U_h\times {\boldsymbol\Sigma}_h$, compute $(u^{n}_\varepsilon,{\boldsymbol \sigma}^{n}_\varepsilon)\in  U_h \times {\boldsymbol\Sigma}_h$ solving
\begin{equation}
\left\{
\begin{array}
[c]{lll}%
(\delta_t u^n_\varepsilon,\bar{u})^h + (\lambda_\varepsilon (u^{n}_\varepsilon) \nabla \Pi^h(F'_\varepsilon(u^n_\varepsilon)),\nabla \bar{u}) = -(\lambda_\varepsilon (u^{n}_\varepsilon) {\boldsymbol\sigma}^n_\varepsilon,\nabla \bar{u}), \ \ \forall \bar{u}\in U_h,\\
(\delta_t {\boldsymbol \sigma}^n_\varepsilon,\bar{\boldsymbol \sigma}) + ( B_h {\boldsymbol \sigma}^n_\varepsilon,\bar{\boldsymbol \sigma}) =
(\lambda_\varepsilon (u^{n}_\varepsilon) \nabla  \Pi^h(F'_\varepsilon(u^n_\varepsilon)),\bar{\boldsymbol \sigma}),\ \ \forall
\bar{\boldsymbol \sigma}\in \Sigma_h,
\end{array}
\right.  \label{modelf02aUS}
\end{equation}}
\end{itemize}
where $Q^h$ is the $L^2$-projection on $U_h$ defined in (\ref{MLP2}), $\widetilde{Q}^h$ the standard $L^2$-projection on ${\boldsymbol\Sigma}_h$, and the operator $B_h$ is defined as 
$$(B_h {\boldsymbol \sigma}^n_\varepsilon,\bar{\boldsymbol \sigma}) = (\mbox{rot } {\boldsymbol\sigma}_\varepsilon^n,\mbox{rot } \bar{\boldsymbol\sigma}) + (\nabla \cdot {\boldsymbol\sigma}_\varepsilon^n, \nabla \cdot \bar{\boldsymbol\sigma}) + ({\boldsymbol\sigma}_\varepsilon^n, \bar{\boldsymbol\sigma}).$$
We recall that $\Pi^h: C(\overline{\Omega})\rightarrow U_h$ is the Lagrange interpolation operator, and the discrete semi-inner product $(\cdot,\cdot)^h$ was defined in (\ref{mlump}). Once the scheme \textbf{US} is solved, given $v^{n-1}_\varepsilon\in V_h$, we can recover $v^n_\varepsilon=v^n_\varepsilon(u^n_\varepsilon) \in V_h$ solving: 
\begin{equation}\label{edovf}
(\delta_t v^n_\varepsilon,\bar{v})  +(\nabla v^n_\varepsilon,\nabla \bar{v}) + (v^n_\varepsilon,\bar{v}) =(u^n_\varepsilon,\bar{v}), \ \ \forall
\bar{v}\in V_h.
\end{equation}

Given $u^n_\varepsilon\in U_h$ and $v^{n-1}_\varepsilon\in V_h$, Lax-Milgram theorem implies that there exists a unique $v^n_\varepsilon \in V_h$ solution of (\ref{edovf}). The solvability of (\ref{modelf02aUS}) will be proved in Subsection \ref{SSus}.

\subsection{Mass conservation and Energy-stability}\label{ELus}
Observe that the scheme \textbf{US}  is also conservative in $u$ (satisfying \eqref{conu1}) and also has the  behavior for $\int_\Omega v_n$ given in \eqref{conv1}.

\begin{defi}
A numerical scheme with solution $(u^n_\varepsilon,{\boldsymbol\sigma}^n_\varepsilon)$ is called energy-stable with respect to the  energy 
\begin{equation}\label{Eus}
\mathcal{E}_\varepsilon^h(u,{\boldsymbol\sigma})=(F_\varepsilon(u),1)^h + \frac{1}{2} \Vert {\boldsymbol\sigma}\Vert_0^2
\end{equation}
if this energy is time decreasing, that is, $\mathcal{E}_\varepsilon^h(u^n_\varepsilon,{\boldsymbol\sigma}^n_\varepsilon)\leq \mathcal{E}_\varepsilon^h(u^{n-1}_\varepsilon,{\boldsymbol\sigma}^{n-1}_\varepsilon)$ for all $n\geq 1$.
\end{defi}

\begin{tma} {\bf (Unconditional stability)} \label{estinc1us}
The scheme \textbf{US} is unconditional energy stable with respect to $\mathcal{E}_\varepsilon^h(u,{\boldsymbol\sigma})$. In fact, if $(u^n_\varepsilon,{\boldsymbol\sigma}^n_\varepsilon)$ is a solution of \textbf{US}, then the following discrete energy law holds
\begin{equation}\label{delus}
\delta_t \mathcal{E}_\varepsilon^h(u^n_\varepsilon,{\boldsymbol\sigma}^n_\varepsilon) + \varepsilon\frac{k}{2}\Vert \delta_t u^n_\varepsilon\Vert_0^2 + \frac{k}{2} \Vert \delta_t  {\boldsymbol\sigma}^n_\varepsilon\Vert_0^2 + \int_\Omega \lambda_\varepsilon (u^{n}_\varepsilon)\vert \nabla\Pi^h(F'_\varepsilon(u^n_\varepsilon)) \vert^2 d\x +\Vert  {\boldsymbol\sigma}^n_\varepsilon\Vert_1^2 \leq 0.
\end{equation}
\end{tma}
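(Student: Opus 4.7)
The plan is to mimic the strategy of Theorem \ref{estinc1uv} (scheme \textbf{UV}): test the two discrete equations with carefully chosen functions so that the chemotactic/production cross terms cancel, then turn the remaining discrete time derivatives into time differences of the energy using a Taylor expansion for $F_\varepsilon$.

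Concretely, first I would take $\bar u = \Pi^h(F'_\varepsilon(u^n_\varepsilon))\in U_h$ in (\ref{modelf02aUS})$_1$ and $\bar{\boldsymbol\sigma}={\boldsymbol\sigma}^n_\varepsilon\in {\boldsymbol\Sigma}_h$ in (\ref{modelf02aUS})$_2$, and add the two identities. By construction of the scheme, the right-hand sides are exactly $-(\lambda_\varepsilon(u^n_\varepsilon){\boldsymbol\sigma}^n_\varepsilon,\nabla\Pi^h(F'_\varepsilon(u^n_\varepsilon)))$ and $+(\lambda_\varepsilon(u^n_\varepsilon)\nabla\Pi^h(F'_\varepsilon(u^n_\varepsilon)),{\boldsymbol\sigma}^n_\varepsilon)$, so they cancel. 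What survives on the left is
\begin{equation*}
(\delta_t u^n_\varepsilon,\Pi^h(F'_\varepsilon(u^n_\varepsilon)))^h + \int_\Omega \lambda_\varepsilon(u^n_\varepsilon)\,|\nabla\Pi^h(F'_\varepsilon(u^n_\varepsilon))|^2 + (\delta_t{\boldsymbol\sigma}^n_\varepsilon,{\boldsymbol\sigma}^n_\varepsilon)+(B_h{\boldsymbol\sigma}^n_\varepsilon,{\boldsymbol\sigma}^n_\varepsilon)=0.
\end{equation*}

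Next I would rewrite each term in the correct form. The ${\boldsymbol\sigma}$-terms are standard: $(\delta_t{\boldsymbol\sigma}^n_\varepsilon,{\boldsymbol\sigma}^n_\varepsilon)=\delta_t(\tfrac12\|{\boldsymbol\sigma}^n_\varepsilon\|_0^2)+\tfrac{k}{2}\|\delta_t{\boldsymbol\sigma}^n_\varepsilon\|_0^2$ and, from the definition of $B_h$ together with the equivalent norm on $\H^1_\sigma(\Omega)$ recalled in Section \ref{NRe}, $(B_h{\boldsymbol\sigma}^n_\varepsilon,{\boldsymbol\sigma}^n_\varepsilon)=\|{\boldsymbol\sigma}^n_\varepsilon\|_1^2$. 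For the first term I would use that $(\cdot,\cdot)^h$ depends only on nodal values so that $(\delta_t u^n_\varepsilon,\Pi^h(F'_\varepsilon(u^n_\varepsilon)))^h=(\delta_t u^n_\varepsilon,F'_\varepsilon(u^n_\varepsilon))^h$, and then invoke exactly the Taylor identity (\ref{I01b}) already used in the proof of Theorem \ref{estinc1uv}; combined with $F''_\varepsilon\geq \varepsilon$, linearity of $\Pi^h$, and the equivalence of $|\cdot|_h$ and $\|\cdot\|_0$ on $U_h$ (Remark \ref{eqh2}), this yields
\begin{equation*}
(\delta_t u^n_\varepsilon,F'_\varepsilon(u^n_\varepsilon))^h\ \geq\ \delta_t(F_\varepsilon(u^n_\varepsilon),1)^h+\varepsilon\,\tfrac{k}{2}\|\delta_t u^n_\varepsilon\|_0^2.
\end{equation*}
Summing everything gives precisely (\ref{delus}).

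I do not expect any real obstacle here; the design of scheme \textbf{US} was tailored so that the cross terms cancel without any discrete analogue of the identity (\ref{PL1}), which is why hypothesis (\textbf{H}) is not needed. The only slightly delicate point is making explicit that the test function $\Pi^h(F'_\varepsilon(u^n_\varepsilon))$ is admissible (it belongs to $U_h$ since $F'_\varepsilon$ is continuous on $\mathbb{R}$) and that $\lambda_\varepsilon(u^n_\varepsilon)\nabla\Pi^h(F'_\varepsilon(u^n_\varepsilon))$ is well-defined in $L^2(\Omega)$, but this is immediate from the bounds $\varepsilon\le\lambda_\varepsilon\le\varepsilon^{-1}$ in (\ref{aE}). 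Everything else is a literal rerun of the argument of Theorem \ref{estinc1uv}.
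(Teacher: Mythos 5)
Your proposal is correct and follows essentially the same route as the paper: test (\ref{modelf02aUS})$_1$ by $\Pi^h(F'_\varepsilon(u^n_\varepsilon))$ and (\ref{modelf02aUS})$_2$ by ${\boldsymbol\sigma}^n_\varepsilon$, let the cross terms cancel, and then conclude via the Taylor identity (\ref{I01b})--(\ref{I01c}) and Remark \ref{eqh2}. The paper's proof is just a terser version of exactly this argument.
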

\begin{proof}
Testing (\ref{modelf02aUS})$_1$ by $\bar{u}= \Pi^h (F'_\varepsilon (u^n_\varepsilon))$, (\ref{modelf02aUS})$_2$ by $\bar{\boldsymbol\sigma}={\boldsymbol\sigma}^n_\varepsilon$ and adding, the terms \\
$(\lambda_\varepsilon (u^{n}_\varepsilon) \nabla  \Pi^h(F'_\varepsilon(u_\varepsilon)),{\boldsymbol \sigma}^n_\varepsilon)$ cancel, and we obtain
\begin{equation*}
(\delta_t u^n_\varepsilon,\Pi^h (F'_\varepsilon (u^n_\varepsilon)))^h + \int_\Omega \lambda_\varepsilon (u^{n}_\varepsilon)\vert \nabla\Pi^h(F'_\varepsilon(u^n_\varepsilon)) \vert^2 d\x  +
\delta_t \Big(\frac{1}{2} \Vert {\boldsymbol\sigma}^n_\varepsilon\Vert_0^2\Big) + \frac{k}{2} \Vert \delta_t  {\boldsymbol\sigma}^n_\varepsilon\Vert_0^2 +\Vert  {\boldsymbol\sigma}^n_\varepsilon\Vert_1^2 = 0,
\end{equation*}
which, proceeding as in (\ref{I01b})-(\ref{I01c}) and using Remark \ref{eqh2}, implies (\ref{delus}).
\end{proof}

\begin{cor} \label{welemUS} {\bf(Uniform estimates) }
Assume that $(u_0,v_0)\in L^2(\Omega)\times H^1(\Omega)$. Let $(u^n_\varepsilon,{\boldsymbol\sigma}^n_\varepsilon)$ be a solution of scheme \textbf{US}. Then, it holds 
$$
(F_\varepsilon(u^n_\varepsilon),1)^h + \Vert {\boldsymbol\sigma}^n_\varepsilon\Vert_{0}^{2}+k \underset{m=1}{\overset{n}{\sum}}\left( \varepsilon\Vert \nabla\Pi^h(F'_\varepsilon(u^m_\varepsilon))\Vert_0^2 +\Vert  {\boldsymbol\sigma}^m_\varepsilon\Vert_1^2\right)\leq C_0, \ \ \forall n\geq 1,
$$
with the constant $C_0>0$ depending on the data $(\Omega, u_0, v_0)$, but independent of $k,h,n$ and $\varepsilon$. Moreover, if $\varepsilon\in (0,e^{-2})$, estimates (\ref{pneg1-a}) hold.
\end{cor}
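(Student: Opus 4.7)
The plan is to mirror the proof of Corollary \ref{welem}, using the discrete energy law (\ref{delus}) of Theorem \ref{estinc1us} as the starting point and the growth/positivity properties of $F_\varepsilon$ to translate energy bounds into pointwise-sign information on $u^n_\varepsilon$.

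First I would bound the initial energy $\mathcal{E}^h_\varepsilon(u_h^0,{\boldsymbol\sigma}_h^0)$ by a constant depending only on $(\Omega,u_0,v_0)$. Since $(u_h^0,{\boldsymbol\sigma}_h^0)=(Q^h u_0,\widetilde{Q}^h(\nabla v_0))$ with $u_0\ge 0$, we have $u_h^0\ge 0$ (because $Q^h$ preserves nonnegativity through the standard argument used in Corollary \ref{welem}, or one just uses the inequality $F_\varepsilon(s)\le C(s^2+1)$ valid for $s\ge 0$ together with the elementwise estimate $\int_\Omega \Pi^h((u_h^0)^2)\le C\|u_h^0\|_0^2$ from Remark \ref{eqh2}). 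Then, by $L^2$-stability of both projections $Q^h$ and $\widetilde{Q}^h$,
\begin{equation*}
\mathcal{E}^h_\varepsilon(u_h^0,{\boldsymbol\sigma}_h^0)=(F_\varepsilon(u_h^0),1)^h+\tfrac{1}{2}\|{\boldsymbol\sigma}_h^0\|_0^2\le C(\|u_0\|_0^2+\|\nabla v_0\|_0^2+1)\le C_0.
\end{equation*}

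Next I would add the inequality (\ref{delus}) from $m=1$ to $m=n$, producing the telescoping bound
\begin{equation*}
\mathcal{E}^h_\varepsilon(u^n_\varepsilon,{\boldsymbol\sigma}^n_\varepsilon)+k\sum_{m=1}^n\!\Big(\!\int_\Omega\!\lambda_\varepsilon(u^m_\varepsilon)|\nabla\Pi^h(F'_\varepsilon(u^m_\varepsilon))|^2\,d\x+\|{\boldsymbol\sigma}^m_\varepsilon\|_1^2\Big)\le \mathcal{E}^h_\varepsilon(u_h^0,{\boldsymbol\sigma}_h^0)\le C_0.
\end{equation*}
Using $\lambda_\varepsilon(s)\ge\varepsilon$ pointwise (cf.\ (\ref{aE}) and (\ref{D})), the dissipation term dominates $\varepsilon\|\nabla\Pi^h(F'_\varepsilon(u^m_\varepsilon))\|_0^2$, and dropping the (nonnegative) $(F_\varepsilon,1)^h$ contribution if needed yields the first claimed estimate.

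Finally, for $\varepsilon\in(0,e^{-2})$ I would repeat the argument used in Corollary \ref{welem}: the bound (\ref{PNa})$_2$ gives $\tfrac{1}{2\varepsilon}(u^n_{\varepsilon-}(\x))^2\le F_\varepsilon(u^n_\varepsilon(\x))$ nodewise, hence, using $(\Pi^h u)^2\le \Pi^h(u^2)$ and the energy bound just proved,
\begin{equation*}
\tfrac{1}{2\varepsilon}\int_\Omega (\Pi^h(u^n_{\varepsilon-}))^2\,d\x\le \int_\Omega \Pi^h(F_\varepsilon(u^n_\varepsilon))\,d\x\le C_0,
\end{equation*}
which is (\ref{pneg1-a})$_1$. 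The bound (\ref{pneg1-a})$_2$ then follows verbatim from the decomposition $|u^n_\varepsilon|=u^n_\varepsilon-2u^n_{\varepsilon-}$, conservation (\ref{conu1}) (which equally holds for Scheme \textbf{US} since $\bar{u}=1\in U_h$ is admissible), and the Cauchy--Schwarz inequality. No genuine obstacle is expected; the only place requiring care is ensuring that the initial $L^2$-projection of $\nabla v_0$ onto ${\boldsymbol\Sigma}_h$ is bounded in $L^2$ independently of $h$, which is standard.
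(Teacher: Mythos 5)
Your proposal is correct and follows essentially the same route as the paper: bound the initial energy $(F_\varepsilon(u^0_h),1)^h+\Vert{\boldsymbol\sigma}^0_h\Vert_0^2$ as in (\ref{ee5}) using $F_\varepsilon(s)\le C(s^2+1)$ for $s\ge 0$ and the $L^2$-stability of $Q^h,\widetilde{Q}^h$, telescope the discrete energy law (\ref{delus}) with $\lambda_\varepsilon\ge\varepsilon$, and then repeat the argument of Corollary \ref{welem} for (\ref{pneg1-a}). No gaps.
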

\begin{proof}
Proceeding as in (\ref{ee5}) (using the fact that $(u_h^{0},{\boldsymbol \sigma}_h^{0})=(Q^h u_0, \widetilde{Q}^h (\nabla v_0))$), we can deduce that 
\begin{equation}\label{ee6}
(F_\varepsilon(u^0_h),1)^h+ \Vert {\boldsymbol\sigma}^0_h\Vert_{0}^{2}\leq C_0, \ \ \ \ \
\end{equation}
where $C_0>0$ is a constant depending on the data $(\Omega, u_0, v_0)$, but independent of $k,h,n$ and $\varepsilon$. Therefore, from the discrete energy law (\ref{delus}) and estimate (\ref{ee6}), we have
\begin{equation*}
(F_\varepsilon(u^n_\varepsilon),1)^h+ \Vert {\boldsymbol\sigma}^n_\varepsilon\Vert_{0}^{2}+k \underset{m=1}{\overset{n}{\sum}}\left( \varepsilon\Vert\nabla\Pi^h(F'_\varepsilon(u^m_\varepsilon))\Vert_0^2 +\Vert {\boldsymbol\sigma}^m_\varepsilon\Vert_1^2\right) \leq  (F_\varepsilon(u^0_h),1)^h+ \Vert {\boldsymbol\sigma}^0_h\Vert_{0}^{2}\leq C_0. \ \ \ \ \
\end{equation*}
Finally,  the estimates given in (\ref{pneg1-a}) are proved as in Corollary \ref{welem}.
\end{proof}
\begin{obs}\label{NNuhs}
The conclusions obtained in Remark \ref{RUe} and the approximated positivity results established in Remark \ref{NNuh} remain true for the scheme \textbf{US}.
\end{obs}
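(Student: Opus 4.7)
The plan is to mirror the argument of Corollary \ref{welem}, now starting from the discrete energy law (\ref{delus}) for scheme \textbf{US}. The three ingredients are: a uniform bound on the initial energy $\mathcal{E}_\varepsilon^h(u_h^0,{\boldsymbol\sigma}_h^0)$, a telescoping sum of (\ref{delus}) from $m=1$ to $m=n$, and the pointwise lower bound $\lambda_\varepsilon(s)\geq\varepsilon$ from (\ref{aE}) used to extract the dissipation in the form displayed in the statement.

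First I would bound the initial energy uniformly in $k$, $h$, and $\varepsilon$. Since $(u_h^0,{\boldsymbol\sigma}_h^0)=(Q^h u_0,\widetilde{Q}^h(\nabla v_0))$, the nodewise characterization of the mass-lumped projection $Q^h$ together with $u_0\geq 0$ gives $u_h^0\geq 0$. Then, using the growth bound $F_\varepsilon(s)\leq C(s^2+1)$ for $s\geq 0$, the $L^2$-stability of $Q^h$ and $\widetilde{Q}^h$, and the equivalence of $\vert\cdot\vert_h$ and $\Vert\cdot\Vert_0$ in $U_h$ (Remark \ref{eqh2}), one obtains $\mathcal{E}_\varepsilon^h(u_h^0,{\boldsymbol\sigma}_h^0)\leq C(\Vert u_0\Vert_0^2+\Vert \nabla v_0\Vert_0^2+1)\leq C_0$, exactly as sketched in (\ref{ee6}).

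Next I would sum the discrete energy inequality (\ref{delus}) from $m=1$ to $m=n$. The time-derivative terms telescope into $\mathcal{E}_\varepsilon^h(u_\varepsilon^n,{\boldsymbol\sigma}_\varepsilon^n)-\mathcal{E}_\varepsilon^h(u_h^0,{\boldsymbol\sigma}_h^0)$, the nonnegative inertia contributions $\varepsilon\frac{k}{2}\Vert\delta_t u_\varepsilon^m\Vert_0^2$ and $\frac{k}{2}\Vert\delta_t{\boldsymbol\sigma}_\varepsilon^m\Vert_0^2$ can simply be discarded, and invoking $\lambda_\varepsilon(s)\geq\varepsilon$ allows me to replace $\int_\Omega\lambda_\varepsilon(u_\varepsilon^m)\vert\nabla\Pi^h(F'_\varepsilon(u_\varepsilon^m))\vert^2$ by the smaller quantity $\varepsilon\Vert\nabla\Pi^h(F'_\varepsilon(u_\varepsilon^m))\Vert_0^2$. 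Combined with the initial-energy bound, this immediately yields the first estimate stated in the corollary.

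Finally, for the approximated positivity bounds (\ref{pneg1-a}) the argument is a direct transcription of the corresponding part of Corollary \ref{welem}: for $\varepsilon\in(0,e^{-2})$ I would invoke (\ref{PNa}), namely $F_\varepsilon(s)\geq s^2/(2\varepsilon)$ for $s\leq 0$, together with the nodewise inequality $(\Pi^h u)^2\leq \Pi^h(u^2)$, to deduce $\Vert\Pi^h(u^n_{\varepsilon-})\Vert_0^2\leq 2\varepsilon\,(F_\varepsilon(u^n_\varepsilon),1)^h\leq C_0\varepsilon$; the $L^1$-bound then follows from $\vert u^n_\varepsilon\vert=u^n_\varepsilon-2u^n_{\varepsilon-}$, the mass conservation identity (which holds also for \textbf{US}), and the Cauchy--Schwarz inequality. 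I do not anticipate any real obstacle, since the structure is identical to Corollary \ref{welem}; the only point worth noting is that the orthogonality hypothesis ({\bf H}) is not invoked at any stage, because the identity (\ref{PL1}) plays no role in the derivation of (\ref{delus}).
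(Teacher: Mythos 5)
Your argument is correct as far as it goes and follows exactly the route the paper takes: the remark has no standalone proof, and its substance rests on Corollary \ref{welemUS} (initial bound as in (\ref{ee6}), telescoping of (\ref{delus}), $\lambda_\varepsilon\geq\varepsilon$) together with the verbatim repetition of the $(\Pi^h u)^2\leq\Pi^h(u^2)$ and mass-conservation arguments from Corollary \ref{welem}. Your observation that ({\bf H}) and the identity (\ref{PL1}) are never invoked is also the right explanation for why the mesh restriction can be dropped for \textbf{US}.

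However, the remark asserts more than you prove, and the omissions are not all cosmetic. First, you do not address part 2 of Remark \ref{NNuh}, i.e.\ the approximated positivity of $v$ in (\ref{mm1}); this does carry over, but only after noting that the recovered $v$-equation must be replaced by the lumped version (\ref{VV}) and tested with $\Pi^h(v^n_{\varepsilon-})$, using the bound $\Vert\Pi^h(u^n_{\varepsilon-})\Vert_0^2\leq C_0\varepsilon$ that you do establish. Second, and more substantively, you do not address Remark \ref{RUe}. Its $l^\infty(L^2)$ conclusion for $\sqrt{\varepsilon}\,u^n_\varepsilon$ transfers immediately from $(F_\varepsilon(u^n_\varepsilon),1)^h\leq C_0$ and $\frac{\varepsilon}{2}s^2\leq F_\varepsilon(s)+2$, but the $l^2(H^1)$ conclusion does not transfer verbatim: the dissipation in (\ref{delus}) controls $\varepsilon\Vert\nabla\Pi^h(F'_\varepsilon(u^m_\varepsilon))\Vert_0^2$, not $\varepsilon\Vert\nabla u^m_\varepsilon\Vert_0^2$ as in (\ref{weak01uv-a}). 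Passing from the former to the latter elementwise via the mean value theorem for $F'_\varepsilon$ (whose derivative lies in $[\varepsilon,\varepsilon^{-1}]$) costs an extra factor $\varepsilon^{-1}$ in the gradient, so one only obtains a bound for $\varepsilon^{3/2}u^n_\varepsilon$ in $l^2(H^1)$ by this route. If you want to claim the full content of Remark \ref{NNuhs}, this point needs either a different argument or an explicit weakening of the exponent.
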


\subsection{Well-posedness}\label{SSus}
\begin{tma} {\bf (Unconditional existence)} 
There exists at least one solution $(u^n_\varepsilon,{\boldsymbol\sigma}^n_\varepsilon)$  of scheme \textbf{US}.
\end{tma}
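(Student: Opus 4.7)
The plan is to apply the Leray-Schauder fixed point theorem, mimicking the proof of Theorem \ref{UVsolv}. Given $(u^{n-1}_\varepsilon,{\boldsymbol\sigma}^{n-1}_\varepsilon)\in U_h\times{\boldsymbol\Sigma}_h$, I would define $R:U_h\times{\boldsymbol\Sigma}_h\rightarrow U_h\times{\boldsymbol\Sigma}_h$ by $R(\widetilde u,\widetilde{\boldsymbol\sigma})=(u,{\boldsymbol\sigma})$, where $(u,{\boldsymbol\sigma})$ solves the decoupled linear system
\begin{equation*}
\frac{1}{k}(u,\bar u)^h=\frac{1}{k}(u^{n-1}_\varepsilon,\bar u)^h-(\lambda_\varepsilon(\widetilde u)\nabla \Pi^h(F'_\varepsilon(\widetilde u)),\nabla \bar u)-(\lambda_\varepsilon(\widetilde u)\widetilde{\boldsymbol\sigma},\nabla \bar u),\ \ \forall \bar u\in U_h,
\end{equation*}
\begin{equation*}
\frac{1}{k}({\boldsymbol\sigma},\bar{\boldsymbol\sigma})+(B_h{\boldsymbol\sigma},\bar{\boldsymbol\sigma})=\frac{1}{k}({\boldsymbol\sigma}^{n-1}_\varepsilon,\bar{\boldsymbol\sigma})+(\lambda_\varepsilon(\widetilde u)\nabla \Pi^h(F'_\varepsilon(\widetilde u)),\bar{\boldsymbol\sigma}),\ \ \forall\bar{\boldsymbol\sigma}\in{\boldsymbol\Sigma}_h.
\end{equation*}
All occurrences of the unknowns appear linearly (the first equation is a pure $(\cdot,\cdot)^h$-projection once its right-hand side is computed, while the second is $L^2$-coercive since $(B_h{\boldsymbol\sigma},{\boldsymbol\sigma})\geq\Vert{\boldsymbol\sigma}\Vert_0^2$), so $R$ is well-defined by Lax-Milgram, and a fixed point of $R$ is precisely a solution of scheme \textbf{US}.

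Next I would bound, uniformly in $\lambda\in(0,1]$, every fixed point $(u,{\boldsymbol\sigma})$ of $\lambda R$. Since $R(u,{\boldsymbol\sigma})=(u/\lambda,{\boldsymbol\sigma}/\lambda)$, multiplying each defining equation by $\lambda$ yields a coupled system which differs from scheme \textbf{US} only in that the $n{-}1$-terms carry a factor $\lambda$. Testing with $\bar u=\Pi^h(F'_\varepsilon(u))$ and $\bar{\boldsymbol\sigma}={\boldsymbol\sigma}$ and adding, the chemotactic contributions $\pm\lambda(\lambda_\varepsilon(u){\boldsymbol\sigma},\nabla \Pi^h(F'_\varepsilon(u)))$ cancel exactly as in the proof of Theorem \ref{estinc1us}. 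The nodewise convexity inequality
\begin{equation*}
F'_\varepsilon(u(p))\,(u(p)-\lambda u^{n-1}_\varepsilon(p))\geq F_\varepsilon(u(p))-F_\varepsilon(\lambda u^{n-1}_\varepsilon(p)),
\end{equation*}
together with the monotonicity of $\mathbb{P}_1$-interpolation, yields
$(u-\lambda u^{n-1}_\varepsilon,\Pi^h(F'_\varepsilon(u)))^h\geq(F_\varepsilon(u),1)^h-(F_\varepsilon(\lambda u^{n-1}_\varepsilon),1)^h$, and a Young bound on $(\lambda/k)({\boldsymbol\sigma}^{n-1}_\varepsilon,{\boldsymbol\sigma})$ then produces a $\lambda$-independent estimate of the form
\begin{equation*}
(F_\varepsilon(u),1)^h+\Vert{\boldsymbol\sigma}\Vert_1^2\leq C(u^{n-1}_\varepsilon,{\boldsymbol\sigma}^{n-1}_\varepsilon,k,\varepsilon),
\end{equation*}
since $\lambda\mapsto F_\varepsilon(\lambda u^{n-1}_\varepsilon(p))$ is continuous, hence uniformly bounded, on $[0,1]$ at each node. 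Combined with the lower bounds (\ref{PNa}), this controls $u$ in $L^2$ and therefore, by finite dimensionality, in $U_h$, uniformly in $\lambda$.

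For the continuity of $R$, if $(\widetilde u^l,\widetilde{\boldsymbol\sigma}^l)\to(\widetilde u,\widetilde{\boldsymbol\sigma})$ in $U_h\times{\boldsymbol\Sigma}_h\hookrightarrow W^{1,\infty}(\Omega)\times L^\infty(\Omega)^d$, the Lipschitz continuity of the scalar $\lambda_\varepsilon$ and of the nodal evaluations in $\Pi^h(F'_\varepsilon(\cdot))$ allows passage to the limit in the right-hand sides of the two linear defining equations; the usual a priori bounds give compactness of $\{R(\widetilde u^l,\widetilde{\boldsymbol\sigma}^l)\}$ in finite dimension, and uniqueness then forces the whole sequence to converge to $R(\widetilde u,\widetilde{\boldsymbol\sigma})$. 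Leray-Schauder in finite dimension produces a fixed point.

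The main obstacle, relative to Theorem \ref{UVsolv}, is that there is no coercive $(\nabla u,\nabla\bar u)$ term available in the $u$-equation and the relation (\ref{PL1}) is not at our disposal, because hypothesis (\textbf{H}) is not imposed for \textbf{US}. All the dissipation needed to close the $\lambda$-uniform bound must therefore be extracted from the nonlinear term $\int_\Omega\lambda_\varepsilon(u)|\nabla \Pi^h(F'_\varepsilon(u))|^2$ combined with the convexity identity above, which plays here the role the discrete chain rule played in the analysis of \textbf{UV}.
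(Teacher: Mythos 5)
Your proposal is correct and follows essentially the same route as the paper: the same decoupled linearized map $R$, well-definedness by Lax--Milgram, the same $\lambda$-uniform bound on fixed points of $\lambda R$ obtained by testing with $\Pi^h(F'_\varepsilon(u))$ and ${\boldsymbol\sigma}$ so that the chemotactic terms cancel and convexity of $F_\varepsilon$ closes the estimate, continuity by finite-dimensional compactness plus uniqueness of the limit, and Leray--Schauder. The only cosmetic difference is that you extract an ($\varepsilon$-dependent) $L^2$ bound on $u$ from (\ref{PNa}) where the paper quotes the $L^1$ bound of (\ref{pneg1-a}); both suffice in finite dimension.
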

\begin{proof}
We will use the Leray-Schauder fixed point theorem. With this aim, given $(u^{n-1}_\varepsilon,{\boldsymbol\sigma}^{n-1}_\varepsilon)\in U_h\times {\boldsymbol\Sigma}_h$, we define the operator $R:U_h\times {\boldsymbol\Sigma}_h\rightarrow U_h\times {\boldsymbol\Sigma}_h$ by 
$R(\widetilde{u},\widetilde{\boldsymbol\sigma})=(u,{\boldsymbol\sigma})$, such that $(u,{\boldsymbol\sigma})\in U_h\times {\boldsymbol\Sigma}_h$ solves the following linear decoupled problem
\begin{equation} \label{modelfexis01-usA}
u\in U_h \ \ \mbox{s.t. } \ \displaystyle\frac{1}{k}(u,\bar{u})^h =\displaystyle\frac{1}{k}(u^{n-1}_\varepsilon,\bar{u})^h- (\lambda_\varepsilon (\widetilde{u}) \nabla \Pi^h(F'_\varepsilon(\widetilde{u})),\nabla \bar{u})   -(\lambda_\varepsilon (\widetilde{u}) \widetilde{\boldsymbol\sigma},\nabla \bar{u}), \ \ \forall \bar{u}\in U_h,
\end{equation}
\begin{equation} \label{modelfexis01-usB}
{\boldsymbol \sigma}\in {\boldsymbol \Sigma}_h \ \ \mbox{s.t. } \ \displaystyle\frac{1}{k}({\boldsymbol \sigma},\bar{\boldsymbol\sigma}) + (B_h{\boldsymbol \sigma},\bar{\boldsymbol \sigma})=\displaystyle\frac{1}{k}({\boldsymbol \sigma}^{n-1}_\varepsilon,\bar{\boldsymbol\sigma})
+(\lambda_\varepsilon (\widetilde{u}) \nabla  \Pi^h(F'_\varepsilon(\widetilde{u})),\bar{\boldsymbol \sigma}),\ \ \forall
\bar{\boldsymbol \sigma}\in \Sigma_h.
\end{equation}
\begin{enumerate}
\item{$R$ is well defined}. Applying the Lax-Milgram theorem to (\ref{modelfexis01-usA}) and (\ref{modelfexis01-usB}), we can deduce that, for each $(\widetilde{u},\widetilde{\boldsymbol \sigma})\in U_h \times {\boldsymbol \Sigma}_h$, there exists a unique 
$(u,{\boldsymbol \sigma})\in U_h \times {\boldsymbol \Sigma}_h$ solution of (\ref{modelfexis01-usA})-(\ref{modelfexis01-usB}).
\item{Let us now prove that all possible fixed points of $\lambda R$ (with $\lambda \in (0,1]$) are bounded.}
In fact, observe that if $(u,{\boldsymbol \sigma})$ is a fixed point of $\lambda R$, then $(u,{\boldsymbol \sigma})$ satisfies the coupled system
\begin{equation}\label{eq010us}
\left\{
\begin{array}
[c]{lll}%
\displaystyle \displaystyle\frac{1}{k} (u,\bar{u})^h +\lambda (\lambda_\varepsilon ({u}) \nabla \Pi^h(F'_\varepsilon(u)),\nabla \bar{u}) +\lambda (\lambda_\varepsilon ({u}) \boldsymbol\sigma,\nabla \bar{u}) =\displaystyle\frac{\lambda}{k}(u^{n-1}_\varepsilon,\bar{u})^h ,\ \ \forall \bar{u}\in U_h, \vspace{0,2 cm}\\
\displaystyle\displaystyle\frac{1}{k}({\boldsymbol \sigma},\bar{\boldsymbol\sigma}) + ( B_h {\boldsymbol \sigma},\bar{\boldsymbol \sigma})- \lambda(\lambda_\varepsilon ({u}) \nabla  \Pi^h(F'_\varepsilon(u)),\bar{\boldsymbol \sigma}) =
\displaystyle\frac{\lambda}{k}(\boldsymbol\sigma^{n-1}_\varepsilon,\bar{\boldsymbol\sigma}), \ \ \forall \bar{\boldsymbol \sigma}\in  {\boldsymbol \Sigma}_h.
\end{array}
\right. 
\end{equation}
Then, testing (\ref{eq010us})$_1$ and (\ref{eq010us})$_2$ by $\bar{u}= \Pi^h (F'_\varepsilon (u))\in U_h$ and $\bar{\boldsymbol \sigma}={\boldsymbol \sigma}\in {\boldsymbol\Sigma}_h$ respectively, proceeding as in Theorem \ref{estinc1us} and taking into account that $\lambda \in (0,1]$, we obtain
\begin{eqnarray}\label{pfac1us}
&\displaystyle(F_\varepsilon(u),1)^h+ \frac{1}{2}\Vert {\boldsymbol\sigma}\Vert_{0}^{2}&\!\!\!\!+k \left(\varepsilon \lambda \Vert\nabla  \Pi^h(F'_\varepsilon(u))\Vert_0^2 +\Vert \boldsymbol\sigma\Vert_1^2\right) \nonumber\\
&&\!\!\!\!\leq  (F_\varepsilon(\lambda u^{n-1}_\varepsilon),1)^h + \frac{\lambda^2}{2} \Vert \boldsymbol\sigma^{n-1}_\varepsilon\Vert_{0}^{2}\leq C(u^{n-1}_\varepsilon, \boldsymbol\sigma^{n-1}_\varepsilon),
\end{eqnarray}
which implies $\Vert {\boldsymbol \sigma}\Vert_{1}\leq C$ (with the constant $C>0$ independent of $\lambda$). Moreover, proceeding as in the proof of (\ref{pneg1-a}) (using (\ref{pfac1us})) we deduce $\Vert u\Vert_{L^1}\leq C$, where the constant $C$ depends on data $(\Omega, u^{n-1}_\varepsilon, {\boldsymbol \sigma}^{n-1}_\varepsilon,\varepsilon)$.

\item{We  prove that $R$ is continuous.} Let $\{(\widetilde{u}^l,\widetilde{\boldsymbol \sigma}^l)\}_{l\in\mathbb{N}}\subset U_h\times {\boldsymbol \Sigma}_h\hookrightarrow W^{1,\infty}(\Omega)\times  \W^{1,\infty}(\Omega)$ be a sequence such that 
\begin{equation}\label{c001-us}
(\widetilde{u}^l,\widetilde{\boldsymbol \sigma}^l)\rightarrow (\widetilde{u},\widetilde{\boldsymbol \sigma}) \ \mbox{ in }  U_h\times {\boldsymbol \Sigma}_h
\quad \hbox{as $l\to +\infty$}.
\end{equation}
In particular, $\{(\widetilde{u}^l,\widetilde{\boldsymbol \sigma}^l)\}_{l\in\mathbb{N}}$ is bounded in $W^{1,\infty}(\Omega)\times  \W^{1,\infty}(\Omega)$. Observe that from (\ref{c001-us}), we have that for  $h$ fixed, $\widetilde{u}^l\rightarrow \widetilde{u}$ in  $C(\overline{\Omega})$; and thus, $F'_\varepsilon(\widetilde{u}^l)\rightarrow F'_\varepsilon(\widetilde{u})$ in  $C(\overline{\Omega})$ since $F'_\varepsilon$ is a Lipschitz continuous function. Then, the linearity and continuity of $\Pi^h$ with respect to $C^0(\overline{\Omega})$-norm imply that $\Pi^h (F'_\varepsilon(\widetilde{u}^l))\rightarrow \Pi^h(F'_\varepsilon(\widetilde{u}))$ in  $C(\overline{\Omega})$ . Moreover, if we denote $(u^l,{\boldsymbol \sigma}^l)=R(\widetilde{u}^l,\widetilde{\boldsymbol \sigma}^l)$, we can deduce (recall that $\varepsilon\leq \lambda_\varepsilon(s) \leq \varepsilon^{-1}$ for all $s\in \mathbb{R}$)
\begin{eqnarray*}\label{fps02us}
&\displaystyle
\frac{1}{2k}\Vert (u^l, {\boldsymbol \sigma}^l)\Vert_{0}^{2}&\!\!\! +\frac{1}{2}\Vert {\boldsymbol \sigma}^l \Vert_1^2 \leq \displaystyle
\frac{1}{2k}\Vert (u^{n-1}_\varepsilon, {\boldsymbol \sigma}^{n-1}_\varepsilon)\Vert_{0}^{2} + C(h,k)\varepsilon^{-2} \Vert \widetilde{\boldsymbol \sigma}^l \Vert_{L^6}^2\nonumber\\
&&\!\!\!\!\!\!\!\!\!\!\!\!\!+C\varepsilon^{-2} \Vert \nabla  \Pi^h(F'_\varepsilon(\widetilde{u}^l))\Vert_{0}^2 + C(h,k)\varepsilon^{-2}\Vert \nabla  \Pi^h(F'_\varepsilon(\widetilde{u}^l))\Vert_{0}^2 \leq C,
\end{eqnarray*}
where $C$ is a constant independent of $l\in\mathbb{N}$. Therefore, $\{(u^l,{\boldsymbol \sigma}^l)=R(\widetilde{u}^l,\widetilde{\boldsymbol \sigma}^l)\}_{l\in\mathbb{N}}$ is bounded in $U_h\times {\boldsymbol \Sigma}_h\hookrightarrow W^{1,\infty}(\Omega)\times  \W^{1,\infty}(\Omega)$. Then, since we remain in finite dimension, there exists a subsequence of  $\{R(\widetilde{u}^l,\widetilde{\boldsymbol \sigma}^l)\}_{l\in\mathbb{N}}$, still denoted by$\{R(\widetilde{u}^{l},\widetilde{\boldsymbol \sigma}^{l})\}_{l\in\mathbb{N}}$,  such that  
\begin{equation}\label{c002-us}
R(\widetilde{u}^{l},\widetilde{\boldsymbol \sigma}^{l})\rightarrow (u',{\boldsymbol \sigma}') \ \ \mbox{ in } \ W^{1,\infty}(\Omega)\times  \W^{1,\infty}(\Omega).
\end{equation}
Then, from (\ref{c001-us})-(\ref{c002-us}), a standard procedure allows us to pass to the limit, as $l$ goes to $+\infty$, in (\ref{modelfexis01-usA})-(\ref{modelfexis01-usB}) (with $(\widetilde{u}^l,\widetilde{\boldsymbol \sigma}^l)$ and $(u^l,{\boldsymbol \sigma}^l)$ instead of $(\widetilde{u},\widetilde{\boldsymbol \sigma})$ and $(u,{\boldsymbol \sigma})$ respectively), and we deduce that $R(\widetilde{u},\widetilde{\boldsymbol \sigma})=({u}',{\boldsymbol \sigma}')$. Therefore, we have proved that any convergent subsequence of  $\{R(\widetilde{u}^l,\widetilde{\boldsymbol \sigma}^l)\}_{l\in\mathbb{N}}$ converges to $R(\widetilde{u},\widetilde{\boldsymbol \sigma})$ in $U_h\times {\boldsymbol \Sigma}_h$, and from uniqueness of $R(\widetilde{u},\widetilde{\boldsymbol \sigma})$, we conclude that the whole sequence $R(\widetilde{u}^l,\widetilde{\boldsymbol \sigma}^l)\rightarrow R(\widetilde{u},\widetilde{\boldsymbol \sigma})$ in $U_h\times {\boldsymbol \Sigma}_h$. Thus, $R$ is continuous.
\end{enumerate}
Therefore, the hypotheses of the Leray-Schauder fixed point theorem (in finite dimension) are satisfied and we conclude that the map $R$ has a fixed point $(u,{\boldsymbol \sigma})$, that is
$R(u,{\boldsymbol \sigma})=(u,{\boldsymbol \sigma})$, which is a solution of nonlinear scheme \textbf{US}. 
\end{proof}

\begin{lem}{\bf (Conditional uniqueness)}
If $k\, f(h,\varepsilon)<1$ (where $f(h,\varepsilon)\uparrow +\infty$ when $h\downarrow 0$ or $\varepsilon\downarrow 0$), then the solution $(u^n_\varepsilon,{\boldsymbol \sigma}_\varepsilon^n)$ of the scheme \textbf{US} is unique.
\end{lem}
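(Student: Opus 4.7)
The argument parallels the conditional uniqueness proof for Scheme \textbf{UV}. Given two solutions $(u^{n,1}_\varepsilon,{\boldsymbol\sigma}^{n,1}_\varepsilon),(u^{n,2}_\varepsilon,{\boldsymbol\sigma}^{n,2}_\varepsilon)\in U_h\times{\boldsymbol\Sigma}_h$ of \textbf{US} sharing the same previous step $(u^{n-1}_\varepsilon,{\boldsymbol\sigma}^{n-1}_\varepsilon)$, the plan is to set $u:=u^{n,1}_\varepsilon-u^{n,2}_\varepsilon$ and ${\boldsymbol\sigma}:={\boldsymbol\sigma}^{n,1}_\varepsilon-{\boldsymbol\sigma}^{n,2}_\varepsilon$, subtract the two copies of (\ref{modelf02aUS}), and apply the add-and-subtract identity
$$
\lambda_\varepsilon(u^{n,1}_\varepsilon)\,a_1-\lambda_\varepsilon(u^{n,2}_\varepsilon)\,a_2=\lambda_\varepsilon(u^{n,1}_\varepsilon)(a_1-a_2)+\big(\lambda_\varepsilon(u^{n,1}_\varepsilon)-\lambda_\varepsilon(u^{n,2}_\varepsilon)\big)\,a_2
$$
with $a_j=\nabla\Pi^h(F'_\varepsilon(u^{n,j}_\varepsilon))$ in the diffusion term and $a_j={\boldsymbol\sigma}^{n,j}_\varepsilon$ in the chemotactic term. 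This produces a homogeneous linear system for $(u,{\boldsymbol\sigma})$ whose residuals are products of \emph{differences} of nonlinearities and bounded factors attached to solution number $2$.

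Next, I would test the resulting $u$-equation with $\bar u=u$ and the ${\boldsymbol\sigma}$-equation with $\bar{\boldsymbol\sigma}={\boldsymbol\sigma}$ and add them, so that the coercive part of the left-hand side reads
$$
\frac{1}{k}|u|_h^{2}+\frac{1}{k}\Vert{\boldsymbol\sigma}\Vert_0^{2}+\Vert{\boldsymbol\sigma}\Vert_1^{2},
$$
the last term coming from $(B_h{\boldsymbol\sigma},{\boldsymbol\sigma})$ and the equivalent ${\boldsymbol H}^1_\sigma$-norm. To dominate the residual terms I would combine: (i) the two-sided bound $\varepsilon\le\lambda_\varepsilon\le\varepsilon^{-1}$; (ii) the Lipschitz estimates $|\lambda_\varepsilon(a)-\lambda_\varepsilon(b)|\le|a-b|$ and $|F'_\varepsilon(a)-F'_\varepsilon(b)|\le\varepsilon^{-1}|a-b|$, transferred to $\Pi^h$ via its $L^\infty$-stability on $U_h$; (iii) the a priori bounds on $\Vert{\boldsymbol\sigma}^{n,2}_\varepsilon\Vert_1$ and $\Vert\nabla\Pi^h(F'_\varepsilon(u^{n,2}_\varepsilon))\Vert_0$ supplied by Corollary~\ref{welemUS}; and (iv) inverse inequalities of the type $\Vert u^h\Vert_{L^\infty}^2\le C_3(h)\Vert u^h\Vert_0^2$ and $\Vert\nabla u^h\Vert_0^2\le C_4(h)\Vert u^h\Vert_0^2$. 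After H\"older and Young, absorption of $\tfrac12\Vert{\boldsymbol\sigma}\Vert_1^{2}$ on the left and Remark~\ref{eqh2} to identify $|u|_h$ with $\Vert u\Vert_0$, the outcome should be a bound of the form
$$
\Vert u\Vert_0^{2}+\Vert{\boldsymbol\sigma}\Vert_0^{2}\le k\,f(h,\varepsilon)\,\big(\Vert u\Vert_0^{2}+\Vert{\boldsymbol\sigma}\Vert_0^{2}\big),
$$
with $f(h,\varepsilon)$ a polynomial combination of $\varepsilon^{-1}$ and the $h$-dependent inverse-inequality constants $C_i(h)$; thus $f(h,\varepsilon)\uparrow+\infty$ as $h\downarrow 0$ or $\varepsilon\downarrow 0$. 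The smallness hypothesis $k\,f(h,\varepsilon)<1$ then forces $u=0$ and, via the ${\boldsymbol\sigma}$-equation, ${\boldsymbol\sigma}=0$.

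The main obstacle is the cross term $(\lambda_\varepsilon(u^{n,1}_\varepsilon)\nabla\Pi^h(F'_\varepsilon(u^{n,1}_\varepsilon)-F'_\varepsilon(u^{n,2}_\varepsilon)),{\boldsymbol\sigma})$ that appears on the right-hand side after testing the ${\boldsymbol\sigma}$-equation. Because $F'_\varepsilon$ is only $\varepsilon^{-1}$-Lipschitz and the nonlinearity sits under a discrete gradient, this quantity cannot be absorbed by the intrinsic coercivity of the scheme (which delivers $\int_\Omega\lambda_\varepsilon(u)|\nabla\Pi^h(F'_\varepsilon(u))|^{2}$ rather than a coercive term in $\nabla u$). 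One is therefore compelled to move the gradient onto a FE function through an $h$-dependent inverse inequality, and it is precisely this step that produces both the $h^{-1}$ and the $\varepsilon^{-1}$ blow-ups in $f(h,\varepsilon)$, explaining the conditional character of the result.
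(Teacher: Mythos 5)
Your proposal is correct and follows essentially the same route as the paper: subtract the two copies of the scheme, split the nonlinear terms with the same add-and-subtract identity, test with $(\bar u,\bar{\boldsymbol\sigma})=(u,{\boldsymbol\sigma})$, and control the residuals via the bounds $\varepsilon\le\lambda_\varepsilon\le\varepsilon^{-1}$, the Lipschitz properties of $\lambda_\varepsilon$ and $F'_\varepsilon$, the a priori estimates of Corollary~\ref{welemUS}, and $h$-dependent inverse inequalities, yielding $\Vert(u,{\boldsymbol\sigma})\Vert_0^2\le k\,f(h,\varepsilon)\Vert u\Vert_0^2$ and hence uniqueness under $k\,f(h,\varepsilon)<1$. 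Your diagnosis of where the $h^{-1}$ and $\varepsilon^{-1}$ blow-ups enter matches the paper's computation.
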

\begin{proof}
Suppose that  there exist $(u^{n,1}_\varepsilon,{\boldsymbol \sigma}^{n,1}_\varepsilon),(u^{n,2}_\varepsilon,{\boldsymbol \sigma}^{n,2}_\varepsilon)\in U_h\times  {\boldsymbol \Sigma}_h$ two possible solutions of the scheme \textbf{US}. Then, defining $u=u^{n,1}_\varepsilon-u^{n,2}_\varepsilon$ and ${\boldsymbol \sigma}={\boldsymbol \sigma}^{n,1}_\varepsilon-{\boldsymbol \sigma}^{n,2}_\varepsilon$,
we have that $(u,{\boldsymbol \sigma})\in U_h\times  {\boldsymbol \Sigma}_h$ satisfies
\begin{eqnarray}\label{uniq001s}
&\displaystyle\frac{1}{k}(u,\bar{u})^h&\!\!\!+ (\lambda_\varepsilon(u^{n,1}_\varepsilon)\nabla \Pi^h (F'_\varepsilon(u^{n,1}_\varepsilon) - F'_\varepsilon(u^{n,2}_\varepsilon)),
\nabla \bar{u})+ ((\lambda_\varepsilon(u^{n,1}_\varepsilon)- \lambda_\varepsilon(u^{n,2}_\varepsilon))\nabla \Pi^h  F'_\varepsilon(u^{n,2}_\varepsilon),
\nabla \bar{u})\nonumber\\
&&\!\!\! + (\lambda_\varepsilon(u^{n,1}_\varepsilon) {\boldsymbol \sigma},
\nabla \bar{u})+ ((\lambda_\varepsilon(u^{n,1}_\varepsilon)- \lambda_\varepsilon(u^{n,2}_\varepsilon)){\boldsymbol \sigma}^{n,2}_\varepsilon,
\nabla \bar{u})=0, \ \forall
\bar{u}\in U_h,
\end{eqnarray}
\begin{eqnarray}\label{uniq002s}
&\displaystyle\frac{1}{k}({\boldsymbol \sigma},\bar{\boldsymbol \sigma})+ ( B_h {\boldsymbol \sigma},\bar{\boldsymbol \sigma})&\!\!\! = (\lambda_\varepsilon(u^{n,1}_\varepsilon)\nabla \Pi^h (F'_\varepsilon(u^{n,1}_\varepsilon) - F'_\varepsilon(u^{n,2}_\varepsilon)),\bar{\boldsymbol \sigma})\nonumber\\
&&\!\!\!+ ((\lambda_\varepsilon(u^{n,1}_\varepsilon)- \lambda_\varepsilon(u^{n,2}_\varepsilon))\nabla \Pi^h  F'_\varepsilon(u^{n,2}_\varepsilon), \bar{\boldsymbol \sigma}),\ \forall \bar{\boldsymbol \sigma}\in {\boldsymbol \Sigma}_h.
\end{eqnarray}
Taking $\bar{u}=u$, $\bar{\boldsymbol \sigma}={\boldsymbol \sigma}$ in (\ref{uniq001s})-(\ref{uniq002s}), adding the resulting expressions and using the fact that $\displaystyle\int_\Omega u=0$ as well as Remark \ref{eqh2}, estimates in Corollary \ref{welemUS} and some inverse inequalities, we obtain
\begin{eqnarray*}
&\displaystyle\frac{1}{k}&\!\!\!\!\Vert (u,{\boldsymbol \sigma})\Vert_{0}^2+ \Vert {\boldsymbol \sigma}\Vert_{1}^2 \leq \Vert \lambda_\varepsilon(u^{n,1}_\varepsilon)\Vert_{L^\infty}\Vert \nabla \Pi^h (F'_\varepsilon(u^{n,1}_\varepsilon) - F'_\varepsilon(u^{n,2}_\varepsilon))\Vert_0 \Vert \nabla u\Vert_0 \nonumber\\
&&+\Vert \lambda_\varepsilon(u^{n,1}_\varepsilon)- \lambda_\varepsilon(u^{n,2}_\varepsilon)\Vert_{L^\infty} \Vert \nabla \Pi^h  F'_\varepsilon(u^{n,2}_\varepsilon)\Vert_{0} \Vert \nabla u\Vert_0+ \Vert \lambda_\varepsilon(u^{n,1}_\varepsilon)\Vert_{L^\infty} \Vert {\boldsymbol \sigma}\Vert_0 \Vert \nabla u\Vert_0\nonumber\\
&&+ \Vert \lambda_\varepsilon(u^{n,1}_\varepsilon)- \lambda_\varepsilon(u^{n,2}_\varepsilon)\Vert_{L^\infty} \Vert {\boldsymbol \sigma}^{n,2}_\varepsilon\Vert_0 \Vert \nabla u\Vert_0  + \Vert \lambda_\varepsilon(u^{n,1}_\varepsilon)\Vert_{L^\infty}\Vert \nabla \Pi^h (F'_\varepsilon(u^{n,1}_\varepsilon) - F'_\varepsilon(u^{n,2}_\varepsilon))\Vert_0 \Vert {\boldsymbol \sigma}\Vert _0 \nonumber\\
&&+ \Vert \lambda_\varepsilon(u^{n,1}_\varepsilon)- \lambda_\varepsilon(u^{n,2}_\varepsilon)\Vert_{L^3} \Vert \nabla \Pi^h  F'_\varepsilon(u^{n,2}_\varepsilon)\Vert_0 \Vert {\boldsymbol \sigma}\Vert_{L^6}\nonumber\\
&&\!\!\!\leq \varepsilon^{-2} C(h) \Vert u\Vert_0^2 + \varepsilon^{-1} C(h) \Vert u \Vert_0^2+ \frac{1}{6} \Vert {\boldsymbol \sigma}\Vert_0 + \varepsilon^{-2} C(h) \Vert u\Vert_0^2+C_0C(h) \Vert u\Vert_0^2\nonumber\\
&& + \frac{1}{6} \Vert {\boldsymbol \sigma}\Vert_0 + \varepsilon^{-4} C(h) \Vert u\Vert_0^2 + \frac{1}{6} \Vert {\boldsymbol \sigma}\Vert_1 + \varepsilon^{-2} C(h) \Vert u\Vert_0^2  ,
\end{eqnarray*}
and therefore,
\begin{equation*}
\Vert (u,{\boldsymbol \sigma})\Vert_{0}^2+ \frac{k}{2} \Vert {\boldsymbol \sigma}\Vert_{1}^2 \leq k\, f(h,\varepsilon) \Vert u\Vert_0^2,
\end{equation*}
where $f(h,\varepsilon)\uparrow +\infty$ when $h\downarrow 0$ or $\varepsilon\downarrow 0$. Thus, if $k\, f(h,\varepsilon)< 1$, we conclude that $(u,{\boldsymbol\sigma})=(0,{\boldsymbol 0})$.
\end{proof}

\section{Scheme UZSW}
In this section, we propose an energy-stable linear fully discrete scheme associated to model (\ref{modelf00}). With this aim, we introduce the new variables 
$$z_\varepsilon= F'_\varepsilon(u_\varepsilon), \ {\boldsymbol\sigma}_\varepsilon=\nabla v_\varepsilon \ \mbox{ and } \ w_\varepsilon=\sqrt{F_\varepsilon(u_\varepsilon) + A}, \ \ \forall A>0.$$
Then, a regularized version of problem (\ref{modelf00}) in the variables $(u_\varepsilon,z_\varepsilon,{\boldsymbol \sigma}_\varepsilon,w_\varepsilon)$ is the following:
\begin{equation}
\left\{
\begin{array}
[c]{lll}%
\partial_t u_\varepsilon - \nabla \cdot(\lambda_\varepsilon (u_\varepsilon) \nabla z_\varepsilon)  -\nabla \cdot (u_\varepsilon {\boldsymbol \sigma}_\varepsilon)=0 \ \mbox{in}\ \Omega,\ t>0,\\
\partial_t {\boldsymbol \sigma}_\varepsilon + \mbox{rot(rot } {\boldsymbol \sigma}_\varepsilon \mbox{)} -\nabla(\nabla \cdot {\boldsymbol \sigma}_\varepsilon) + {\boldsymbol \sigma}_\varepsilon = u_\varepsilon\nabla z_\varepsilon\ \ \mbox{in}\ \Omega,\ t>0,\\
\displaystyle\partial_t w_\varepsilon = \frac{1}{2\sqrt{F_\varepsilon(u_\varepsilon)+A}}F'_\varepsilon(u_\varepsilon)\, \partial_t u_\varepsilon \ \ \mbox{in}\ \Omega,\ t>0,\vspace{0.2 cm}\\
z_\varepsilon=\displaystyle\frac{1}{\sqrt{F_\varepsilon(u_\varepsilon)+A}}F'_\varepsilon(u_\varepsilon)\, w_\varepsilon  \ \ \mbox{in}\ \Omega,\ t>0,\\
\displaystyle\frac{\partial z_\varepsilon}{\partial \mathbf{n}}=0\quad \mbox{on}\ \partial\Omega,\ t>0,\\
{\boldsymbol \sigma}_\varepsilon\cdot \mathbf{n}=0, \ \ \left[\mbox{rot }{\boldsymbol \sigma}_\varepsilon \times \mathbf{n}\right]_{tang}=0 \quad \mbox{on}\ \partial\Omega,\ t>0,\\
u_\varepsilon(\x ,0)=u_0(\x )\geq 0,\ {\boldsymbol \sigma}_\varepsilon(\x ,0)=\nabla v_0(
\x ),\ w_\varepsilon(\x,0)=\sqrt{F_\varepsilon(u_0(\x))+A}\quad \mbox{in}\ \Omega,
\end{array}
\right.  \label{modelf02acontUZSW}
\end{equation}
for all constant $A>0$. 
\begin{obs}
Notice that problems (\ref{modelf02acontUS}) and (\ref{modelf02acontUZSW}) are equivalents for all $A>0$. In fact, if $(u_\varepsilon,{\boldsymbol \sigma}_\varepsilon)$ is a solution of the scheme \textbf{US}, then defining $z_\varepsilon= F'_\varepsilon(u_\varepsilon)$ and $w_\varepsilon=\sqrt{F_\varepsilon(u_\varepsilon) + A}$, we deduce that $(u_\varepsilon,z_\varepsilon,{\boldsymbol \sigma}_\varepsilon,w_\varepsilon)$ is a solution of the scheme \textbf{UZSW}. Reciprocally, if  $(u_\varepsilon,z_\varepsilon,{\boldsymbol \sigma}_\varepsilon,w_\varepsilon)$ is a solution of the scheme \textbf{UZSW}, then from
$$
w_\varepsilon=\sqrt{F_\varepsilon(u_\varepsilon) + A} \ \Longleftrightarrow \ 
\left\{\begin{array}{lcl}
\displaystyle\partial_t w_\varepsilon = \frac{1}{2\sqrt{F_\varepsilon(u_\varepsilon)+A}}F'_\varepsilon(u_\varepsilon)\, \partial_t u_\varepsilon,\\
w_\varepsilon\vert_{t=0}=\sqrt{F_\varepsilon(u_0)+A},
\end{array}\right. 
$$
and (\ref{modelf02acontUZSW})$_4$, we deduce that $z_\varepsilon= F'_\varepsilon(u_\varepsilon)$, and therefore, $(u_\varepsilon,{\boldsymbol \sigma}_\varepsilon)$ is a solution of the scheme \textbf{US}.
\end{obs}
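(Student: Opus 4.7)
The plan is to establish the equivalence by proving both implications via the obvious change of variables $z_\varepsilon = F'_\varepsilon(u_\varepsilon)$ and $w_\varepsilon = \sqrt{F_\varepsilon(u_\varepsilon) + A}$. Since the $u_\varepsilon$ and ${\boldsymbol\sigma}_\varepsilon$ equations of (\ref{modelf02acontUZSW}) coincide structurally with those of (\ref{modelf02acontUS}) once $z_\varepsilon$ is replaced by $F'_\varepsilon(u_\varepsilon)$, the real content lies in verifying that the two extra equations (\ref{modelf02acontUZSW})$_3$ and (\ref{modelf02acontUZSW})$_4$ are automatically satisfied by the proposed $(z_\varepsilon,w_\varepsilon)$ in one direction, and that they force the identities $w_\varepsilon = \sqrt{F_\varepsilon(u_\varepsilon)+A}$ and $z_\varepsilon = F'_\varepsilon(u_\varepsilon)$ in the other.

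For the forward implication \textbf{US} $\Rightarrow$ \textbf{UZSW}, I take a solution $(u_\varepsilon,{\boldsymbol\sigma}_\varepsilon)$ of (\ref{modelf02acontUS}) and define $z_\varepsilon,w_\varepsilon$ as above. Substitution immediately turns (\ref{modelf02acontUZSW})$_1$--(\ref{modelf02acontUZSW})$_2$ into (\ref{modelf02acontUS})$_1$--(\ref{modelf02acontUS})$_2$. Equation (\ref{modelf02acontUZSW})$_3$ follows from a direct chain-rule computation on $w_\varepsilon = \sqrt{F_\varepsilon(u_\varepsilon)+A}$, and equation (\ref{modelf02acontUZSW})$_4$ is the algebraic identity
$$
\frac{F'_\varepsilon(u_\varepsilon)}{\sqrt{F_\varepsilon(u_\varepsilon)+A}}\,w_\varepsilon = \frac{F'_\varepsilon(u_\varepsilon)}{\sqrt{F_\varepsilon(u_\varepsilon)+A}}\sqrt{F_\varepsilon(u_\varepsilon)+A} = F'_\varepsilon(u_\varepsilon) = z_\varepsilon,
$$
while the boundary and initial conditions match by construction.

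For the reverse implication \textbf{UZSW} $\Rightarrow$ \textbf{US}, the key step is recovering the identity $w_\varepsilon = \sqrt{F_\varepsilon(u_\varepsilon)+A}$ from (\ref{modelf02acontUZSW})$_3$ and the initial condition. I would introduce the auxiliary function
$$
g(t,\x) := w_\varepsilon(t,\x) - \sqrt{F_\varepsilon(u_\varepsilon(t,\x))+A},
$$
observe that $g(0,\cdot)\equiv 0$ by the prescribed initial data, and compute $\partial_t g$ using the chain rule together with (\ref{modelf02acontUZSW})$_3$; the two contributions cancel exactly, so $g$ is constant in $t$ and hence vanishes identically. Plugging $w_\varepsilon = \sqrt{F_\varepsilon(u_\varepsilon)+A}$ into (\ref{modelf02acontUZSW})$_4$ then yields $z_\varepsilon = F'_\varepsilon(u_\varepsilon)$, and substituting this back into (\ref{modelf02acontUZSW})$_1$--(\ref{modelf02acontUZSW})$_2$ recovers exactly the \textbf{US} system.

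The only mild subtlety is justifying the pointwise time-integration in the $g \equiv 0$ argument, which requires $u_\varepsilon$ to be sufficiently regular for the chain rule to apply and the map $s\mapsto\sqrt{F_\varepsilon(s)+A}$ to be $C^1$. Both are guaranteed by $F_\varepsilon\in C^{2,1}(\mathbb{R})$ together with $F_\varepsilon(s)+A\geq A>0$, which makes the square root smooth and bounded away from zero. Hence no genuine analytic obstacle arises; the argument is essentially algebraic, which is the reason this equivalence can be stated as a remark rather than a substantive theorem.
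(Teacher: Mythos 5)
Your proposal is correct and follows essentially the same route as the paper: the forward direction is direct substitution plus the chain rule, and the reverse direction recovers $w_\varepsilon=\sqrt{F_\varepsilon(u_\varepsilon)+A}$ from (\ref{modelf02acontUZSW})$_3$ and the initial datum (your $g\equiv 0$ argument is exactly the paper's ODE-equivalence displayed in the remark), after which (\ref{modelf02acontUZSW})$_4$ gives $z_\varepsilon=F'_\varepsilon(u_\varepsilon)$. Your added comment on the regularity needed for the chain rule, guaranteed by $F_\varepsilon\in C^{2,1}(\mathbb{R})$ and $F_\varepsilon+A\geq A>0$, is a harmless and reasonable elaboration of what the paper leaves implicit.
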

  As in the previous section, once solved (\ref{modelf02acontUZSW}), we can recover $v_\varepsilon$ from $u_\varepsilon$ solving (\ref{modelf01eqv}). Observe that multiplying (\ref{modelf02acontUZSW})$_1$ by $z_\varepsilon$, (\ref{modelf02acontUZSW})$_2$ by ${\boldsymbol\sigma}_\varepsilon$, (\ref{modelf02acontUZSW})$_3$ by $2 w_\varepsilon$, (\ref{modelf02acontUZSW})$_4$ by $\partial_t u_\varepsilon$, integrating over $\Omega$ and using the boundary conditions of (\ref{modelf02acontUZSW}), we obtain the following energy law
\begin{eqnarray*}
\frac{d}{dt} \displaystyle \int_\Omega \Big( \vert w_\varepsilon\vert^2 + \frac{1}{2} \vert {\boldsymbol\sigma}_\varepsilon\vert^2\Big) d \x + \int_\Omega \lambda_\varepsilon(u_\varepsilon)\vert \nabla z_\varepsilon\vert^2  d \x + \Vert{\boldsymbol\sigma}_\varepsilon \Vert_1^2= 0.
\end{eqnarray*}
In particular, the modified energy $\mathcal{E}(w,{\boldsymbol\sigma})= \displaystyle\int_\Omega \Big(\vert w\vert^2 + \frac{1}{2} \vert {\boldsymbol\sigma}\vert^2\Big) d \x$ is decreasing in time. Then, we consider a fully discrete approximation of the regularized problem (\ref{modelf02acontUZSW}) using a FE discretization in space and a first order semi-implicit discretization in time (again considered for simplicity on a uniform partition of $[0,T]$ with time step
$k=T/N : (t_n = nk)_{n=0}^{n=N}$). Concerning the space discretization, we consider the triangulation as in the scheme \textbf{US} (hence, the constraint ({\bf H}) related with the right-angles simplices is not imposed), and we choose the following continuous FE spaces for  $u_\varepsilon$, $z_\varepsilon$, ${\boldsymbol\sigma}_\varepsilon$, $w_\varepsilon$ and $v_\varepsilon$:
$$(U_h, Z_h,{\boldsymbol\Sigma}_h,W_h, V_h) \subset H^1(\Omega)^5,\quad \hbox{generated by $\mathbb{P}_k,\mathbb{P}_l,\mathbb{P}_{m},\mathbb{P}_r,\mathbb{P}_{s}$ with $k,l,m,r,s\geq 1$ and $k\leq l$.}
$$
\begin{obs}
The constraint $k\leq l$ implies $U_h\subseteq Z_h$ which will be used to prove the well-posedness of the scheme \textbf{UZSW} (see Theorem \ref{WPuzsw} below).
\end{obs}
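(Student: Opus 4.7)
The Remark claims $U_h \subseteq Z_h$ whenever both spaces are built on the same triangulation $\mathcal{T}_h$ with conforming Lagrange finite elements of degrees $k$ and $l$ respectively and $k \leq l$. My plan is to verify this by a direct, element-wise argument based on the polynomial inclusion $\mathbb{P}_k \subseteq \mathbb{P}_l$; no analytic machinery is required, and the result should reduce to a one-line observation at the element level together with the fact that continuity is inherited.

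I would fix an arbitrary $u_h \in U_h$. By the definition of continuous conforming Lagrange finite elements, $u_h \in C(\bar{\Omega})$ and $u_h|_K \in \mathbb{P}_k(K)$ for every simplex $K \in \mathcal{T}_h$. Since every polynomial of total degree at most $k$ is automatically a polynomial of total degree at most $l$ whenever $k \leq l$, I obtain $u_h|_K \in \mathbb{P}_l(K)$ for every $K$. The continuity of $u_h$ across inter-element interfaces is inherited from the starting function (not something one has to re-prove), so $u_h \in Z_h$, and therefore $U_h \subseteq Z_h$. There is essentially no obstacle here; the claim reduces to the trivial polynomial inclusion on each simplex combined with the fact that the two spaces are defined on the same mesh and share the same continuity requirement.

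The substantive content of the Remark is not the inclusion itself but what it enables in Theorem \ref{WPuzsw}. The scheme \textbf{UZSW} treats $z_\varepsilon$ as an independent discrete variable in $Z_h$ that is meant to realize the identity $z_\varepsilon = F'_\varepsilon(u_\varepsilon)$, replacing the nonlinear Lagrange interpolant $\Pi^h(F'_\varepsilon(u^n_h)) \in U_h$ used in \textbf{US}. In the anticipated Leray–Schauder existence argument, modelled on Theorem \ref{UVsolv} and the existence theorem for scheme \textbf{US}, one will need at a key step to use a test function built from $U_h$ inside the $z$-equation, or to identify a discrete $F'_\varepsilon(u^n_h)$-type quantity with an element of $Z_h$. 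The inclusion $U_h \subseteq Z_h$ makes both operations legal while retaining the flexibility $k < l$ (typically $\mathbb{P}_1$ for $u$ and a higher-order choice for $z$), so the algebraic manipulations behind the discrete energy law of Theorem \ref{estinc1us} can be carried over verbatim to the linearised $(u,z,{\boldsymbol\sigma},w)$ formulation.
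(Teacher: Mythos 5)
Your verification of the inclusion itself is correct and is exactly the (implicit) argument the paper relies on: the remark is stated without proof because it reduces to the element-wise inclusion $\mathbb{P}_k(K)\subseteq\mathbb{P}_l(K)$ for $k\leq l$ on a common triangulation, with global continuity inherited. One small correction to your account of how the inclusion is deployed: the well-posedness proof of the scheme \textbf{UZSW} (Theorem \ref{WPuzsw}) is \emph{not} a Leray--Schauder argument --- since the scheme is linear and finite-dimensional, existence follows from uniqueness, and the inclusion $U_h\subseteq Z_h$ is used precisely in the uniqueness step, to take $\bar{z}=u\in U_h\subseteq Z_h$ as a test function in the difference of the $z$-equations and conclude $u=0$ after one has already shown $\nabla z={\boldsymbol 0}$ and ${\boldsymbol\sigma}={\boldsymbol 0}$.
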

Then, we consider the following first order in time, linear and coupled scheme: 
\begin{itemize}
\item{\underline{\emph{Scheme \textbf{UZSW}}:}\\
{\bf Initialization}: Let $(u_h^{0},{\boldsymbol \sigma}_h^{0},w^0_{h,\varepsilon})=(Q^h u_0, \widetilde{Q}^h (\nabla v_0),  \widehat{Q}^h (\sqrt{F_\varepsilon(u_0)+A})) \in  U_h\times {\boldsymbol\Sigma}_h\times W_h$.\\
{\bf Time step} n: Given $(u^{n-1}_\varepsilon,{\boldsymbol \sigma}^{n-1}_\varepsilon,w^{n-1}_\varepsilon)\in  U_h\times {\boldsymbol\Sigma}_h\times W_h$, compute $(u^{n}_\varepsilon,z^n_\varepsilon,{\boldsymbol \sigma}^{n}_\varepsilon,w^n_\varepsilon)\in  U_h\times Z_h \times {\boldsymbol\Sigma}_h\times W_h$ solving
\begin{equation}
\left\{
\begin{array}
[c]{lll}%
(\delta_t u^n_\varepsilon,\bar{z}) + (\lambda_\varepsilon (u^{n-1}_\varepsilon) \nabla z^n_\varepsilon,\nabla \bar{z}) = -(u^{n-1}_\varepsilon {\boldsymbol \sigma}^n_\varepsilon,\nabla \bar{z}), \ \ \forall \bar{z}\in Z_h,\\
(\delta_t {\boldsymbol \sigma}^n_\varepsilon,\bar{\boldsymbol \sigma}) + ( B_h{\boldsymbol \sigma}^n_\varepsilon,\bar{\boldsymbol \sigma}) =
(u^{n-1}_\varepsilon \nabla z^n_\varepsilon,\bar{\boldsymbol \sigma}),\ \ \forall
\bar{\boldsymbol \sigma}\in \Sigma_h,\\
(\delta_t w^n_\varepsilon,\bar{w}) = \Big(\frac{1}{2\sqrt{F_\varepsilon(u^{n-1}_\varepsilon)+A}}F'_\varepsilon(u^{n-1}_\varepsilon)\,\delta_t u^n_\varepsilon,\bar{w}\Big), \ \ \forall \bar{w}\in W_h,\\ 
(z^n_\varepsilon,\bar{u}) = \Big(\frac{1}{\sqrt{F_\varepsilon(u^{n-1}_\varepsilon)+A}}F'_\varepsilon(u^{n-1}_\varepsilon)\, w^n_\varepsilon,\bar{u}\Big), \ \ \forall \bar{u}\in U_h.
\end{array}
\right.  \label{modelf02c}
\end{equation}}
\end{itemize}
Recall that $(B_h{\boldsymbol \sigma}^n_\varepsilon,\bar{\boldsymbol \sigma}) := (\mbox{rot } {\boldsymbol\sigma}_\varepsilon^n,\mbox{rot } \bar{\boldsymbol\sigma}) + (\nabla \cdot {\boldsymbol\sigma}_\varepsilon^n, \nabla \cdot \bar{\boldsymbol\sigma}) + ({\boldsymbol\sigma}_\varepsilon^n, \bar{\boldsymbol\sigma})$ for all
$\bar{\boldsymbol \sigma}\in \Sigma_h$, $Q^h$ is the $L^2$-projection on $U_h$ defined in (\ref{MLP2}), and $\widetilde{Q}^h$ and $\widehat{Q}^h$ are the standard $L^2$-projections on ${\boldsymbol\Sigma}_h$ and $W_h$ respectively. As in the scheme \textbf{US}, once the scheme \textbf{UZSW} is solved, given $v^{n-1}_\varepsilon\in V_h$, we can recover $v^n_\varepsilon=v^n_\varepsilon(u^n_\varepsilon) \in V_h$ solving (\ref{edovf}).

\subsection{Mass conservation and Energy-stability}
Observe that the scheme \textbf{UZSW}  is also conservative in $u$ (satisfying \eqref{conu1}) and also has the  behavior for $\int_\Omega v_n$ given in \eqref{conv1}.

\begin{defi}
A numerical scheme with solution $(u^{n}_\varepsilon,z^n_\varepsilon,{\boldsymbol \sigma}^{n}_\varepsilon,w^n_\varepsilon)$ is called energy-stable with respect to the  energy 
\begin{equation}\label{Euzsw}
\mathcal{E}(w,{\boldsymbol\sigma})= \Vert w\Vert_0^2 + \frac{1}{2} \Vert {\boldsymbol\sigma}\Vert_0^2
\end{equation}
if this energy is time decreasing, that is, $\mathcal{E}(w^n_\varepsilon,{\boldsymbol\sigma}^n_\varepsilon)\leq \mathcal{E}(w^{n-1}_\varepsilon,{\boldsymbol\sigma}^{n-1}_\varepsilon)$ for all $n\geq 1$.
\end{defi}

\begin{tma} {\bf (Unconditional stability)} \label{estinc1uzsw}
The scheme \textbf{UZSW} is unconditional energy stable with respect to $\mathcal{E}(w,{\boldsymbol\sigma})$. In fact, if $(u^{n}_\varepsilon,z^n_\varepsilon,{\boldsymbol \sigma}^{n}_\varepsilon,w^n_\varepsilon)$ is a solution of \textbf{UZSW}, then the following discrete energy law holds
\begin{eqnarray}\label{lawenerfydisceC}
\delta_t \mathcal{E}(w^n_\varepsilon,{\boldsymbol \sigma}^n_\varepsilon)+ 
k
\Vert\delta_t w^n_\varepsilon\Vert_0^2+
\frac{k}{2} \Vert \delta_t {\boldsymbol \sigma}^n_\varepsilon\Vert_{0}^2 
+\int_\Omega \lambda_\varepsilon(u^{n-1}_\varepsilon) \vert
\nabla z^n_\varepsilon\vert^{2} +
\Vert  {\boldsymbol
\sigma}^n_\varepsilon\Vert_{1}^{2}=0.
\end{eqnarray}
\end{tma}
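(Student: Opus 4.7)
The strategy is to mirror, step by step, the continuous energy computation made before the statement: choose for each of the four equations of \textbf{UZSW} the natural test function that produces the square of the unknown appearing in $\mathcal{E}(w,\boldsymbol\sigma)$. Concretely, take $\bar z=z^n_\varepsilon$ in (\ref{modelf02c})$_1$, $\bar{\boldsymbol\sigma}=\boldsymbol\sigma^n_\varepsilon$ in (\ref{modelf02c})$_2$, $\bar w=2w^n_\varepsilon$ in (\ref{modelf02c})$_3$ and $\bar u=\delta_t u^n_\varepsilon$ in (\ref{modelf02c})$_4$. These are admissible since $z^n_\varepsilon\in Z_h$, $\boldsymbol\sigma^n_\varepsilon\in\boldsymbol\Sigma_h$, $w^n_\varepsilon\in W_h$ and $\delta_t u^n_\varepsilon=(u^n_\varepsilon-u^{n-1}_\varepsilon)/k\in U_h$.

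Adding the identities obtained from (\ref{modelf02c})$_1$ and (\ref{modelf02c})$_2$, the couplings $-(u^{n-1}_\varepsilon\boldsymbol\sigma^n_\varepsilon,\nabla z^n_\varepsilon)$ and $(u^{n-1}_\varepsilon\nabla z^n_\varepsilon,\boldsymbol\sigma^n_\varepsilon)$ cancel by symmetry of the $L^2$-inner product, which is the discrete counterpart of the cancellation of chemotactic and production terms in the formal energy law. Using the elementary identity $2(\delta_t a^n,a^n)=\delta_t\|a^n\|_0^2+k\|\delta_t a^n\|_0^2$ with $a^n=\boldsymbol\sigma^n_\varepsilon$, one obtains
\begin{equation*}
(\delta_t u^n_\varepsilon,z^n_\varepsilon)+\int_\Omega\lambda_\varepsilon(u^{n-1}_\varepsilon)|\nabla z^n_\varepsilon|^2\,d\x+\delta_t\bigl(\tfrac{1}{2}\|\boldsymbol\sigma^n_\varepsilon\|_0^2\bigr)+\tfrac{k}{2}\|\delta_t\boldsymbol\sigma^n_\varepsilon\|_0^2+\|\boldsymbol\sigma^n_\varepsilon\|_1^2=0.
\end{equation*}

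The crux is to recast $(\delta_t u^n_\varepsilon,z^n_\varepsilon)$ as the quadratic time-increment of $w^n_\varepsilon$. From (\ref{modelf02c})$_3$ tested by $\bar w=2w^n_\varepsilon$ and from (\ref{modelf02c})$_4$ tested by $\bar u=\delta_t u^n_\varepsilon$, both right-hand sides equal the symmetric pairing $\bigl(\tfrac{F'_\varepsilon(u^{n-1}_\varepsilon)}{\sqrt{F_\varepsilon(u^{n-1}_\varepsilon)+A}}\,w^n_\varepsilon,\,\delta_t u^n_\varepsilon\bigr)$. Equating the two left-hand sides then yields $(\delta_t u^n_\varepsilon,z^n_\varepsilon)=2(\delta_t w^n_\varepsilon,w^n_\varepsilon)$, and a second application of the identity $2(\delta_t a^n,a^n)=\delta_t\|a^n\|_0^2+k\|\delta_t a^n\|_0^2$ with $a^n=w^n_\varepsilon$ gives $(\delta_t u^n_\varepsilon,z^n_\varepsilon)=\delta_t\|w^n_\varepsilon\|_0^2+k\|\delta_t w^n_\varepsilon\|_0^2$. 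Substitution into the previous display produces (\ref{lawenerfydisceC}) exactly.

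The ``main obstacle'' is conceptual rather than technical: one must recognise that equations (\ref{modelf02c})$_3$ and (\ref{modelf02c})$_4$ are dual under the weight $F'_\varepsilon(u^{n-1}_\varepsilon)/\sqrt{F_\varepsilon(u^{n-1}_\varepsilon)+A}$, which is precisely how Energy Quadratization is engineered to preserve the integration-by-parts identity $\partial_t F_\varepsilon(u_\varepsilon)=F'_\varepsilon(u_\varepsilon)\partial_t u_\varepsilon$ at the fully discrete and linear level. No inclusion between the spaces $U_h$, $Z_h$, $W_h$ is invoked (the constraint $k\le l$, i.e.\ $U_h\subseteq Z_h$, is needed only for well-posedness), no right-angle hypothesis ({\bf H}) is required, and no lower bound on $\lambda_\varepsilon$ is used in the derivation; all arguments reduce to symmetry of the $L^2$-pairing and the standard time-discrete square completion.
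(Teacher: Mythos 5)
Your proof is correct and follows exactly the paper's argument: the paper's own proof consists precisely of taking $(\bar{z},\bar{\boldsymbol\sigma},\bar{w},\bar{u})=(z^n_\varepsilon,{\boldsymbol\sigma}^n_\varepsilon,2w^n_\varepsilon,\delta_t u^n_\varepsilon)$ in (\ref{modelf02c}), and you have supplied the intermediate computations (the cancellation of the cross terms, the identity $2(\delta_t a^n,a^n)=\delta_t\Vert a^n\Vert_0^2+k\Vert\delta_t a^n\Vert_0^2$, and the matching of the right-hand sides of (\ref{modelf02c})$_3$ and (\ref{modelf02c})$_4$) that the paper leaves implicit. Your closing remarks on which hypotheses are and are not used are also accurate.
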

\begin{proof}
The proof follows taking $(\bar{z},\bar{\boldsymbol \sigma},\bar{w},\bar{u})=(z^n_\varepsilon,{\boldsymbol
\sigma}^n_\varepsilon,2w^n_\varepsilon,\delta_t u^n_\varepsilon)$ in (\ref{modelf02c}).
\end{proof}

From the (local in time) discrete energy law (\ref{lawenerfydisceC}), we deduce the following global in time estimates for $(u^n_\varepsilon,z^n_\varepsilon,{\boldsymbol
\sigma}^n_\varepsilon,w^n_\varepsilon)$ solution of the scheme \textbf{UZSW}:

\begin{cor} {\bf(Uniform Weak estimates) }
Assume that $(u_0,v_0)\in L^2(\Omega)\times H^1(\Omega)$. Let $(u^n_\varepsilon,z^n_\varepsilon,{\boldsymbol
\sigma}^n_\varepsilon,w^n_\varepsilon)$ be a solution of scheme \textbf{UZSW}. Then, the following estimate holds
\begin{equation}\label{weeUZSW}
\Vert w^n_\varepsilon\Vert_0^2 + \Vert {\boldsymbol
\sigma}^n_\varepsilon\Vert_{0}^{2}+
k \underset{m=1}{\overset{n}{\sum}}
\left( \int_\Omega \lambda_\varepsilon(u^{m-1}_\varepsilon) \vert
\nabla z^m_\varepsilon\vert^{2} +
\Vert  {\boldsymbol
\sigma}^m_\varepsilon\Vert_{1}^{2}\right)\leq C_0, \ \ \ \forall n\geq 1,
\end{equation}
with the constant $C_0>0$ depending on the data $(\Omega, u_0,v_0)$, but independent of $k,h,n$ and $\varepsilon$.
\end{cor}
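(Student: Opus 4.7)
The plan is to argue directly from the discrete energy law (\ref{lawenerfydisceC}) of Theorem \ref{estinc1uzsw}, exactly as was done for Scheme \textbf{UV} in Corollary \ref{welem} and for Scheme \textbf{US} in Corollary \ref{welemUS}. First I would multiply (\ref{lawenerfydisceC}) by $k$ and sum over $m=1,\dots,n$. The term $k\,\delta_t\mathcal{E}(w^m_\varepsilon,\boldsymbol{\sigma}^m_\varepsilon)$ telescopes to $\mathcal{E}(w^n_\varepsilon,\boldsymbol{\sigma}^n_\varepsilon)-\mathcal{E}(w^0_{h,\varepsilon},\boldsymbol{\sigma}^0_h)$, while the two inertial dissipation terms involving $\Vert\delta_t w^m_\varepsilon\Vert_0^2$ and $\Vert\delta_t \boldsymbol{\sigma}^m_\varepsilon\Vert_0^2$ are non-negative and may be discarded. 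What remains is
\begin{equation*}
\Vert w^n_\varepsilon\Vert_0^2+\frac{1}{2}\Vert\boldsymbol{\sigma}^n_\varepsilon\Vert_0^2+k\sum_{m=1}^{n}\left(\int_\Omega\lambda_\varepsilon(u^{m-1}_\varepsilon)\vert\nabla z^m_\varepsilon\vert^2+\Vert\boldsymbol{\sigma}^m_\varepsilon\Vert_1^2\right)\leq \mathcal{E}(w^0_{h,\varepsilon},\boldsymbol{\sigma}^0_h),
\end{equation*}
which already has the structure demanded by (\ref{weeUZSW}).

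The remaining task is to bound $\mathcal{E}(w^0_{h,\varepsilon},\boldsymbol{\sigma}^0_h)=\Vert w^0_{h,\varepsilon}\Vert_0^2+\tfrac12\Vert\boldsymbol{\sigma}^0_h\Vert_0^2$ by a constant depending only on $(\Omega,u_0,v_0,A)$, and in particular independent of $\varepsilon,h,k,n$. Since $\widetilde{Q}^h$ and $\widehat{Q}^h$ are $L^2$-projections, hence $L^2$-contractions, and since $\boldsymbol{\sigma}^0_h=\widetilde{Q}^h(\nabla v_0)$ and $w^0_{h,\varepsilon}=\widehat{Q}^h(\sqrt{F_\varepsilon(u_0)+A})$, I obtain
\begin{equation*}
\Vert\boldsymbol{\sigma}^0_h\Vert_0\leq\Vert\nabla v_0\Vert_0\leq\Vert v_0\Vert_1,\qquad \Vert w^0_{h,\varepsilon}\Vert_0^2\leq\int_\Omega F_\varepsilon(u_0)+A\,\vert\Omega\vert.
\end{equation*}
Then, imitating estimate (\ref{ee5}) from Corollary \ref{welem}, I would apply the pointwise bound $F_\varepsilon(s)\leq C(s^2+1)$ for $s\geq 0$ (with $C$ independent of $\varepsilon$) to $u_0\geq 0$; using $u_0\in L^2(\Omega)$ this gives $\int_\Omega F_\varepsilon(u_0)\leq C(\Vert u_0\Vert_0^2+1)$. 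Combining yields $\mathcal{E}(w^0_{h,\varepsilon},\boldsymbol{\sigma}^0_h)\leq C(\Vert u_0\Vert_0^2+\Vert v_0\Vert_1^2+1)=:C_0$, and (\ref{weeUZSW}) follows.

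The only subtle point is the $\varepsilon$-uniform control of the initial value of $w^0_{h,\varepsilon}$, because $F_\varepsilon$ is built by matching two quadratic extrapolations of $F$ outside the central interval $[\varepsilon,\varepsilon^{-1}]$. On that central interval $F_\varepsilon=F$ and the inequality $s(\ln s-1)+1\leq s^2+1$ is immediate, whereas on $[0,\varepsilon]$ and $[\varepsilon^{-1},+\infty)$ one checks, by integrating $F''_\varepsilon\in\{1/\varepsilon,\varepsilon\}$ from the matching values $F(\varepsilon)$, $F(\varepsilon^{-1})$, that the quadratic pieces are still bounded by $C(s^2+1)$ uniformly in $\varepsilon\in(0,1)$. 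This is precisely the observation implicitly invoked in (\ref{ee5}), so no new ingredient is needed and the rest of the argument is a telescoping summation; I expect this $\varepsilon$-independent control of the initial $F_\varepsilon$-integral to be the main, if mild, technical step.
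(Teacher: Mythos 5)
Your proposal is correct and follows essentially the same route as the paper: the authors likewise bound the initial energy $\Vert w^0_{h,\varepsilon}\Vert_0^2+\tfrac12\Vert\boldsymbol{\sigma}^0_h\Vert_0^2$ using the $L^2$-contraction property of $\widehat{Q}^h,\widetilde{Q}^h$ and the $\varepsilon$-uniform bound $F_\varepsilon(s)\leq C(s^2+1)$ for $s\geq 0$ applied to $u_0\geq 0$ (their estimate (\ref{ee7}), modelled on (\ref{ee5})), and then multiply the discrete energy law (\ref{lawenerfydisceC}) by $k$ and sum from $m=1$ to $m=n$. Your extra remark on verifying the uniformity of $F_\varepsilon(s)\leq C(s^2+1)$ on the outer quadratic pieces is a correct elaboration of what the paper invokes implicitly.
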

\begin{proof}
Proceeding as in (\ref{ee5}) and taking into account that $u_0\geq 0$ and
$(u_h^{0},{\boldsymbol \sigma}_h^{0},w^0_{h,\varepsilon})=$ \\ $(Q^h u_0, \widetilde{Q}^h (\nabla v_0),  \widehat{Q}^h (\sqrt{F_\varepsilon(u_0(\x))+A}))$,  we have that
\begin{eqnarray}\label{ee7}
&\Vert w_{h,\varepsilon}^0\Vert_0^2+ \displaystyle\frac{1}{2}\Vert {\boldsymbol
\sigma}^0_h\Vert_{0}^{2}&\!\!\! = \Vert  \widehat{Q}^h (\sqrt{F_\varepsilon(u_0)+A})\Vert_0^2+ \displaystyle\frac{1}{2}\Vert \widetilde{Q}^h (\nabla v_0)\Vert_{0}^{2} \leq \int_\Omega (F_\varepsilon(u_0) +A) +  \frac{1}{2}\Vert \nabla v_0\Vert_{0}^{2}\nonumber\\
&&\!\!\! \leq C\int_\Omega ((u_0)^2 + 1) +  \frac{1}{2}\Vert \nabla v_0\Vert_{0}^{2} \leq C(\Vert u_0\Vert_0^2 + \Vert v_0\Vert_{1}^{2} + 1)\leq C_0,
\end{eqnarray}
 with the constant $C_0>0$ depending on the data $(\Omega,u_0,v_0)$, but independent of $k,h,n$ and $\varepsilon$. Therefore, multiplying the discrete energy law (\ref{lawenerfydisceC}) by $k$, adding from $m=1$ to $m=n$ and using (\ref{ee7}),  we arrive at (\ref{weeUZSW}). 
\end{proof}

\subsection{Well-posedness}
\begin{tma} {\bf(Unconditional unique solvability)}\label{WPuzsw}
There exists a unique $(u^n_\varepsilon,z^n_\varepsilon,{\boldsymbol
\sigma}^n_\varepsilon,w^n_\varepsilon)$ solution of scheme \textbf{UZSW}.
\end{tma}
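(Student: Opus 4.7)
The strategy is to exploit the fact that scheme \textbf{UZSW} is a linear, square system posed in the finite-dimensional space $U_h\times Z_h\times {\boldsymbol\Sigma}_h\times W_h$, so existence and uniqueness are equivalent, and it is enough to verify that the associated homogeneous problem admits only the trivial solution.

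I would argue by difference. Given two solutions $(u^{n,i}_\varepsilon,z^{n,i}_\varepsilon,{\boldsymbol\sigma}^{n,i}_\varepsilon,w^{n,i}_\varepsilon)$, $i=1,2$, let $(u,z,{\boldsymbol\sigma},w)$ denote their difference. Since $u^{n-1}_\varepsilon$ is common to both solutions, the coefficients $\lambda_\varepsilon(u^{n-1}_\varepsilon)$, $F'_\varepsilon(u^{n-1}_\varepsilon)/\sqrt{F_\varepsilon(u^{n-1}_\varepsilon)+A}$ and the factor $u^{n-1}_\varepsilon$ appearing in (\ref{modelf02c}) stay frozen, and $(u,z,{\boldsymbol\sigma},w)$ solves the homogeneous version of (\ref{modelf02c}) in which the previous-step contributions drop out and $\delta_t u=u/k$, $\delta_t{\boldsymbol\sigma}={\boldsymbol\sigma}/k$, $\delta_t w=w/k$.

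Next I mimic the energy test of Theorem \ref{estinc1uzsw}: take $\bar z=z$ in the first equation, $\bar{\boldsymbol\sigma}={\boldsymbol\sigma}$ in the second, $\bar w=2w$ in the third, and $\bar u=u/k\in U_h$ in the fourth. The chemotactic cross terms $\pm(u^{n-1}_\varepsilon{\boldsymbol\sigma},\nabla z)$ from the first two equations cancel on addition, while the third and fourth combine into the identity $(u/k,z)=(2/k)\Vert w\Vert_0^2$ (both sides equal $\bigl(F'_\varepsilon(u^{n-1}_\varepsilon)/\sqrt{F_\varepsilon(u^{n-1}_\varepsilon)+A}\,w,\,u/k\bigr)$). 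Substituting this identity yields
\begin{equation*}
\frac{2}{k}\Vert w\Vert_0^2+\frac{1}{k}\Vert{\boldsymbol\sigma}\Vert_0^2+\int_\Omega \lambda_\varepsilon(u^{n-1}_\varepsilon)\,|\nabla z|^2\, d\x +\Vert{\boldsymbol\sigma}\Vert_1^2=0,
\end{equation*}
and since $\lambda_\varepsilon\geq \varepsilon>0$ I conclude $w=0$, ${\boldsymbol\sigma}={\bf 0}$ and $\nabla z=0$, so $z$ is a constant on the connected domain $\Omega$. Testing the fourth equation (with $w=0$) against $\bar u=1\in U_h$ forces this constant to be zero, i.e. $z=0$. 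Finally, testing the homogeneous first equation (with $z=0$ and ${\boldsymbol\sigma}={\bf 0}$) against $\bar z=u$, which is admissible thanks to $U_h\subseteq Z_h$, produces $\Vert u\Vert_0^2=0$, hence $u=0$, and from the third equation with $u=0$ one recovers $w=0$ consistently.

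The main obstacle, and the very reason for the polynomial-degree hypothesis $k\leq l$ emphasized in the remark preceding the statement, is precisely this last closing step: without the inclusion $U_h\subseteq Z_h$ the function $u$ could not be used as a test in the $z$-equation, and the argument would stall at $\nabla z=0$ without any direct control on $u$ itself. Beyond this subtlety, the proof is a direct linear-algebra adaptation of the computation that produced the discrete energy law (\ref{lawenerfydisceC}).
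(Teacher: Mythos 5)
Your proposal is correct and follows essentially the same route as the paper: reduce to the homogeneous system by linearity, test with $(\bar z,\bar{\boldsymbol\sigma},\bar w,\bar u)=(z,{\boldsymbol\sigma},2w,\tfrac{1}{k}u)$ to get the energy identity forcing $w=0$, ${\boldsymbol\sigma}=\mathbf{0}$, $\nabla z=0$, then use equation (\ref{modelf02clin})$_4$ to kill the constant $z$, and finally test the $z$-equation with $\bar z=u$ (valid since $U_h\subseteq Z_h$) to conclude $u=0$. Your added emphasis on why the hypothesis $k\leq l$ is indispensable matches the remark in the paper and is the same closing step used there.
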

\begin{proof}
By linearity of the scheme \textbf{UZSW}, it suffices to prove uniqueness. Suppose that  there exist $(u^n_{\varepsilon,1},z^n_{\varepsilon,1},{\boldsymbol
\sigma}^n_{\varepsilon,1},w^n_{\varepsilon,1}),(u^n_{\varepsilon,2},z^n_{\varepsilon,2},{\boldsymbol
\sigma}^n_{\varepsilon,2},w^n_{\varepsilon,2})\in U_h\times Z_h\times {\boldsymbol \Sigma}_h\times W_h$ two possible solutions of \textbf{UZSW}.
Then defining $u=u^n_{\varepsilon,1}-u^n_{\varepsilon,2}$, $z=z^n_{\varepsilon,1} - z^n_{\varepsilon,2}$, ${\boldsymbol
\sigma}={\boldsymbol
\sigma}^n_{\varepsilon,1}-{\boldsymbol
\sigma}^n_{\varepsilon,2}$ and $w=w^n_{\varepsilon,1} - w^n_{\varepsilon,2}$,
we have that $(u,z,{\boldsymbol
\sigma},w)\in U_h\times Z_h\times {\boldsymbol \Sigma}_h\times W_h$ satisfies
\begin{equation}
\left\{
\begin{array}
[c]{lll}%
\frac{1}{k}(u,\bar{z}) + (\lambda_\varepsilon (u^{n-1}_\varepsilon) \nabla z,\nabla \bar{z}) = -(u^{n-1}_\varepsilon {\boldsymbol \sigma},\nabla \bar{z}), \ \ \forall \bar{z}\in Z_h,\\
\frac{1}{k}({\boldsymbol \sigma},\bar{\boldsymbol \sigma}) + ( B_h{\boldsymbol \sigma},\bar{\boldsymbol \sigma}) =
(u^{n-1}_\varepsilon \nabla z,\bar{\boldsymbol \sigma}),\ \ \forall
\bar{\boldsymbol \sigma}\in \Sigma_h,\\
\frac{1}{k}(w,\bar{w}) = \frac{1}{2k}\Big(\frac{1}{\sqrt{F_\varepsilon(u^{n-1}_\varepsilon)+A}}F'_\varepsilon(u^{n-1}_\varepsilon)\,u,\bar{w}\Big), \ \ \forall \bar{w}\in W_h,\\ 
(z,\bar{u}) = \Big(\frac{1}{\sqrt{F_\varepsilon(u^{n-1}_\varepsilon)+A}}F'_\varepsilon(u^{n-1}_\varepsilon)\,w ,\bar{u}\Big), \ \ \forall \bar{u}\in U_h.
\end{array}
\right.  \label{modelf02clin}
\end{equation}
Taking $(\bar{z},\bar{\boldsymbol \sigma},\bar{w},\bar{u})=(z,{\boldsymbol
\sigma},2w,\frac{1}{k} u)$ in (\ref{modelf02clin}) and adding, we obtain
\begin{equation*}
\frac{2}{k}\Vert w\Vert_0^2+ \frac{1}{k}\Vert {\boldsymbol
\sigma}\Vert_0^{2}
+\int_\Omega \lambda_\varepsilon(u^{n-1}_\varepsilon) \vert
\nabla z\vert^{2} +
\Vert  {\boldsymbol
\sigma}\Vert_{1}^{2}=0.
\end{equation*}
Taking into account that $\lambda_\varepsilon(u^{n-1}_\varepsilon)\geq \varepsilon$, we deduce that $(\nabla z,{\boldsymbol
\sigma},w)=({\boldsymbol 0},{\boldsymbol 0},0)$, hence $z=C:=cte$. Moreover, using the fact that $w=0$ and $z=C$, from (\ref{modelf02clin})$_4$ we conclude that $z=0$. Finally, taking $\bar{z}=u$ in (\ref{modelf02clin})$_1$ (which is possible thanks to the choice $U_h\subseteq Z_h$), since $(\nabla z,{\boldsymbol
\sigma})=({\boldsymbol 0},{\boldsymbol 0})$ we conclude $u=0$.
\end{proof}

\section{Numerical simulations}
The aim of this section is to compare the results of several numerical simulations using the schemes derived throughout the paper. We choose the spaces for $(u,z,{\boldsymbol\sigma},w)$  generated by $\mathbb{P}_1$-continuous FE. Moreover, we have chosen the 2D domain $[0,2]^2$ with a structured mesh (then ({\bf H}) holds and the scheme \textbf{UV} can be defined), and all the simulations are carried out using \textbf{FreeFem++} software. In the comparison, we will also consider the classical Backward Euler scheme for model (\ref{modelf00}), which is given for the following first order in time, nonlinear and coupled scheme: 
\begin{itemize}
\item{\underline{\emph{Scheme \textbf{BEUV}}:}\\
{\bf Initialization}: Let $(u^{0},{v}^{0})=(Q^h u_0,R^h {v}_{0})\in  U_h\times V_h$. \\
{\bf Time step} n: Given $(u^{n-1},{v}^{n-1})\in  U_h\times {V}_h$, compute $(u^n,{v}^{n})\in U_h \times {V}_h$ solving
\begin{equation*}
\left\{
\begin{array}
[c]{lll}%
(\delta_t u^n,\bar{u}) + (\nabla u^n,\nabla \bar{u}) = -(u^{n}\nabla {v}^n,\nabla \bar{u}), \ \ \forall \bar{u}\in U_h,\\
(\delta_t {v}^n,\bar{v}) +(A_h  v^n, \bar{v})  =
(u^{n} ,\bar{v}),\ \ \forall
\bar{v}\in V_h.
\end{array}
\right.
\end{equation*}}
\end{itemize}
\begin{obs}\label{RBE}
The scheme \textbf{BEUV} has not been analyzed in the previous sections because it is not clear how to prove its energy-stability. In fact, observe that the scheme \textbf{UV} (which is the ``closest'' approximation to the scheme \textbf{BEUV} considered in this paper) differs from the scheme \textbf{BEUV} in the use of the regularized functions $F_\varepsilon$, $F'_\varepsilon$ and $F''_\varepsilon$ (see (\ref{F2pE}) and Figure \ref{fig:Fe}) and in the approximation of cross-diffusion term $(u\nabla {v},\nabla \bar{u})$, which are crucial for the proof of the energy-stability of the scheme \textbf{UV}. 
\end{obs}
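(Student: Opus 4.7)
The plan is to make the remark's claim precise by attempting to replay the proof of Theorem~\ref{estinc1uv} on scheme \textbf{BEUV} and locating exactly where it breaks down. Since the remark asserts that no obvious adaptation works, the goal of the ``proof'' is to enumerate the specific obstructions rather than to produce a positive result.

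The proof of Theorem~\ref{estinc1uv} rests on three ingredients: (a) testing the $u$-equation with $\bar u = \Pi^h(F'_\varepsilon(u^n_\varepsilon))$ and the $v$-equation with $\bar v = (A_h-I)v^n_\varepsilon$; (b) the algebraic identity (\ref{PL1}), $\Lambda_\varepsilon(u^n)\nabla \Pi^h(F'_\varepsilon(u^n))=\nabla u^n$, which makes the chemotactic and production terms cancel; and (c) the Taylor expansion (\ref{I01b}) together with the uniform bound $F''_\varepsilon \geq \varepsilon$ controlling the time-discretization remainder. My plan is to take the natural \textbf{BEUV} counterparts $\bar u = \Pi^h(\ln u^n)$ and $\bar v = (A_h-I)v^n$, and to track each of these ingredients in turn.

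For (a), there is no mechanism in \textbf{BEUV} forcing $u^n \geq 0$, so $\Pi^h(\ln u^n)$ need not even be defined; the regularized $F_\varepsilon \in C^{2,1}(\mathbb R)$ built from (\ref{F2pE}) is precisely the device that makes the test function well-defined for arbitrary sign of $u^n_\varepsilon$. For (b), even granting $u^n > 0$, the cross-diffusion contribution reads $(u^n \nabla v^n, \nabla \Pi^h(\ln u^n))$, and cancellation with $(\nabla u^n, \nabla v^n)$ from the $v$-equation would require the pointwise relation $u^n \nabla \Pi^h(\ln u^n) = \nabla u^n$; this is the continuous chain rule $u \nabla \ln u = \nabla u$, which is false for piecewise polynomial $u^n$. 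The matrix $\Lambda_\varepsilon$, together with hypothesis ({\bf H}) and the choice of $\mathbb P_1$ for $U_h$, was introduced in \textbf{UV} precisely to supply the discrete substitute (\ref{PL1}). For (c), the Taylor remainder in (\ref{I01b}) becomes weighted by $1/(\theta u^n + (1-\theta) u^{n-1})$, which has no uniform positive lower bound and moreover vanishes at infinity, so it cannot be absorbed as dissipation nor dropped.

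The main obstacle is clearly (b): a replacement for identity (\ref{PL1}) adapted to the unregularized product $u^n \nabla v^n$ is, as far as I can see, not available, and this is the entire structural reason for introducing $\Lambda_\varepsilon$ together with the right-angled $\mathbb P_1$ hypothesis. Any eventual proof of energy-stability for \textbf{BEUV} would therefore need a genuinely new cancellation mechanism for the chemotactic/production terms, an independent route to positivity of $u^n$, and some way to control the discrete time derivative of the entropy without the bound $F'' \geq \varepsilon$. These three failures reinforce each other and together justify the remark: the difficulty with \textbf{BEUV} is not a technical gap in the present argument but a structural obstruction to transferring it.
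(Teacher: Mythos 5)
Your proposal is correct and follows essentially the same reasoning as the paper's remark: it identifies the regularization of $F$ (needed both to make the test function well defined for sign-changing $u^n$ and to control the time-discrete remainder) and the $\Lambda_\varepsilon$-based approximation of the cross-diffusion term via (\ref{PL1}) as the two ingredients whose absence blocks the energy argument for \textbf{BEUV}, which is exactly what the remark asserts. One small overstatement in your point (c): if one did have $\theta u^n+(1-\theta)u^{n-1}>0$, the Taylor remainder $\tfrac{k}{2}F''(\theta u^n+(1-\theta)u^{n-1})(\delta_t u^n)^2$ would be nonnegative by convexity of $F$ on $(0,+\infty)$ and could simply be dropped when proving mere energy decrease (the uniform bound $F''_\varepsilon\geq\varepsilon$ is only needed to retain the extra dissipation term $\varepsilon\tfrac{k}{2}\Vert\delta_t u^n_\varepsilon\Vert_0^2$ in (\ref{deluv})), so obstruction (c) reduces to the positivity issue you already raise in (a), and the decisive structural obstruction is indeed (b).
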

The linear iterative methods used to approach the solutions of the nonlinear schemes \textbf{UV}, \textbf{US} and \textbf{BEUV} are the following Picard methods, in which, we denote $(u^{n}_\varepsilon,{v}_\varepsilon^{n},{\boldsymbol \sigma}_\varepsilon^{n}):=(u^{n},{v}^{n},{\boldsymbol \sigma}^{n})$.
\begin{enumerate}
\item[(i)]{Picard method to approach a solution $(u^{n},{v}^{n})$ of the scheme \textbf{UV}}\\
{\bf Initialization ($l=0$):} Set $(u^{0},{v}^{0})=(u^{n-1},{v}^{n-1})\in  U_h\times V_h$.\\
{\bf Algorithm:} Given $(u^{l},{v}^{l})\in  U_h\times V_h$, compute $(u^{l+1},{v}^{l+1})\in  U_h\times V_h$ such that
$$
\left\{
\begin{array}
[c]{lll}%
\frac{1}{k}(u^{l+1},\bar{u})^h + (\nabla u^{l+1},\nabla \bar{u}) = \frac{1}{k}(u^{n-1},\bar{u})^h -(\Lambda_\varepsilon (u^{l})\nabla {v}^{l+1},\nabla \bar{u}), \ \ \forall \bar{u}\in U_h,\\
\frac{1}{k}({v}^{l+1} ,\bar{v}) +(A_h  v^{l+1}, \bar{v})  = \frac{1}{k}({v}^{n-1},\bar{v}) 
+(u^{l},\bar{v}),\ \ \forall
\bar{v}\in V_h.
\end{array}
\right.  
$$
until the stopping criteria $\max\left\{\displaystyle\frac{\Vert
u^{l+1} - u^{l}\Vert_{0}}{\Vert u^{l}\Vert_{0}},\displaystyle\frac{\Vert
v^{l+1} - v^{l}\Vert_{0}}{\Vert
v^{l}\Vert_{0}}\right\}\leq tol$. 

\item[(ii)]{Picard method to approach a solution $(u^{n},{\boldsymbol \sigma}^{n})$ of the scheme \textbf{US}}\\
{\bf Initialization ($l=0$):} Set $(u^{0},{\boldsymbol \sigma}^{0})=(u^{n-1},{\boldsymbol \sigma}^{n-1})\in  U_h\times {\boldsymbol \Sigma}_h$.\\
{\bf Algorithm:} Given $(u^{l},{\boldsymbol \sigma}^{l})\in  U_h\times {\boldsymbol \Sigma}_h$, compute $(u^{l+1},{\boldsymbol \sigma}^{l+1})\in  U_h\times {\boldsymbol \Sigma}_h$ such that
\begin{equation*}
\left\{
\begin{array}
[c]{lll}%
\frac{1}{k}(u^{l+1},\bar{u})^h + (\nabla (u^{l+1}, \nabla \bar{u}) -  (\nabla u^{l}, \nabla \bar{u}) \\
\hspace{2 cm} = \frac{1}{k}(u^{n-1},\bar{u})^h - (\lambda_\varepsilon (u^{l}) \nabla \Pi^h(F'_\varepsilon(u^{l})),\nabla \bar{u})  -(\lambda_\varepsilon (u^{l}) {\boldsymbol\sigma}^{l+1},\nabla \bar{u}), \ \ \forall \bar{u}\in U_h,\\
\frac{1}{k}({\boldsymbol \sigma}^{l+1},\bar{\boldsymbol \sigma}) + \langle B{\boldsymbol \sigma}^{l+1},\bar{\boldsymbol \sigma}\rangle =\frac{1}{k}({\boldsymbol \sigma}^{n-1},\bar{\boldsymbol \sigma}) +
(\lambda_\varepsilon (u^{l}) \nabla  \Pi^h(F'_\varepsilon(u^{l})),\bar{\boldsymbol \sigma}),\ \ \forall
\bar{\boldsymbol \sigma}\in \Sigma_h.
\end{array}
\right.  
\end{equation*}
until the stopping criteria $\max\left\{\displaystyle\frac{\Vert
u^{l+1} - u^{l}\Vert_{0}}{\Vert u^{l}\Vert_{0}},\displaystyle\frac{\Vert
{\boldsymbol \sigma}^{l+1} - {\boldsymbol \sigma}^{l}\Vert_{0}}{\Vert
{\boldsymbol \sigma}^{l}\Vert_{0}}\right\}\leq tol$. Note that a residual term $(\nabla (u^{l+1} - u^{l}), \nabla \bar{u})$ is considered.

\item[(iii)]{Picard method to approach a solution $(u^{n},{v}^{n})$ of  the scheme \textbf{BEUV}}\\
{\bf Initialization ($l=0$):} Set $(u^{0},{v}^{0})=(u^{n-1},{v}^{n-1})\in  U_h\times V_h$.\\
{\bf Algorithm:} Given $(u^{l},{v}^{l})\in  U_h\times V_h$, compute $(u^{l+1},{v}^{l+1})\in  U_h\times V_h$ such that
\begin{equation*}
\left\{
\begin{array}
[c]{lll}%
\frac{1}{k}(u^{l+1} ,\bar{u}) + (\nabla u^{l+1},\nabla \bar{u}) = \frac{1}{k}( u^{n-1},\bar{u})  -(u^{l}\nabla {v}^{l+1},\nabla \bar{u}), \ \ \forall \bar{u}\in U_h,\\
\frac{1}{k}({v}^{l+1},\bar{v}) +(A_h  v^{l+1}, \bar{v})  = \frac{1}{k}({v}^{n-1},\bar{v})+
(u^{l} ,\bar{v}),\ \ \forall
\bar{v}\in V_h,
\end{array}
\right. 
\end{equation*}
until the stopping criteria $\max\left\{\displaystyle\frac{\Vert
u^{l+1} - u^{l}\Vert_{0}}{\Vert u^{l}\Vert_{0}},\displaystyle\frac{\Vert
v^{l+1} - v^{l}\Vert_{0}}{\Vert
v^{l}\Vert_{0}}\right\}\leq tol$.

\end{enumerate}
\begin{obs}
In all cases, first we compute $v^{l+1}$ (resp. ${\boldsymbol\sigma}^{l+1}$) solving the $v$-equation (resp. ${\boldsymbol\sigma}$-system) and then, inserting $v^{l+1}$ (resp. ${\boldsymbol\sigma}^{l+1}$)  in $u$-equation, we compute $u^{l+1}$. 
\end{obs}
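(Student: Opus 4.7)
The plan is to justify this remark by direct inspection of the three Picard iteration systems (i)--(iii). In each of them, the $v$-equation (respectively the $\boldsymbol{\sigma}$-system) at iteration step $l+1$ has the property that its right-hand side involves only already-known quantities---namely the data from the previous time step ($u^{n-1}, v^{n-1}, \boldsymbol{\sigma}^{n-1}$) together with the previous Picard iterate $u^l$---so the equation for $v^{l+1}$ (or $\boldsymbol{\sigma}^{l+1}$) decouples from the equation for $u^{l+1}$ and can be solved first.

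For scheme (i), the equation for $v^{l+1}$ reads
\begin{equation*}
\frac{1}{k}(v^{l+1},\bar v) + (A_h v^{l+1},\bar v) = \frac{1}{k}(v^{n-1},\bar v) + (u^l,\bar v), \quad \forall \bar v \in V_h,
\end{equation*}
and since the bilinear form $\frac{1}{k}(\cdot,\cdot)+(A_h\cdot,\cdot)$ is symmetric, continuous and coercive on $V_h$, Lax--Milgram provides a unique $v^{l+1}\in V_h$. Once $v^{l+1}$ is available, the $u$-equation becomes
\begin{equation*}
\frac{1}{k}(u^{l+1},\bar u)^h + (\nabla u^{l+1},\nabla \bar u) = \frac{1}{k}(u^{n-1},\bar u)^h - (\Lambda_\varepsilon(u^l)\nabla v^{l+1},\nabla \bar u), \quad \forall \bar u \in U_h,
\end{equation*}
whose left-hand side is coercive on $U_h$ (using Remark \ref{eqh2} to treat $(\cdot,\cdot)^h$ as an inner product equivalent to the $L^2$ one) and whose right-hand side is now known data; a second application of Lax--Milgram yields the unique $u^{l+1}\in U_h$.

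Scheme (ii) has exactly the same block-triangular structure, with $\boldsymbol{\sigma}^{l+1}$ playing the role of $v^{l+1}$: its equation is driven only by $u^l$ through the source term $(\lambda_\varepsilon(u^l)\nabla\Pi^h(F'_\varepsilon(u^l)),\bar{\boldsymbol{\sigma}})$, so one solves the $B_h$-system for $\boldsymbol{\sigma}^{l+1}$ first, and then inserts the result into the $u$-equation (where the cross-diffusion factor is $\lambda_\varepsilon(u^l)\boldsymbol{\sigma}^{l+1}$) and solves for $u^{l+1}$. Scheme (iii) is identical in spirit to (i): $v^{l+1}$ depends only on $u^l$ and $v^{n-1}$, and then $u^{l+1}$ is obtained from the first equation in which the transport term has been linearized as $(u^l\nabla v^{l+1},\nabla\bar u)$, so $u^{l+1}$ does not feed back into the $v$-equation at the same Picard level.

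The content of the remark is therefore that, at each Picard step, one does not have to assemble and invert the full coupled linear system of size $\dim(U_h)+\dim(V_h)$ (or $\dim(U_h)+\dim(\boldsymbol{\Sigma}_h)$), but only two smaller uncoupled systems solved in the prescribed order. No real obstacle arises beyond checking the block-triangular pattern; the only point requiring attention is the linearization choice for the cross-diffusion term in each scheme, where the coefficient multiplying $\nabla v^{l+1}$ (or $\boldsymbol{\sigma}^{l+1}$) is taken at the previous Picard iterate $u^l$ (or, via $\Lambda_\varepsilon(u^l)$), which is precisely what allows the sequential resolution described in the remark.
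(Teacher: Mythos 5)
Your verification is correct and matches what the paper intends: the remark is purely a description of the implementation, and the paper offers no proof beyond the implicit observation that each Picard system is block-triangular, with the $v$-equation (resp.\ $\boldsymbol{\sigma}$-system) at level $l+1$ driven only by $u^{l}$ and previous-time-step data, so it can be solved first and its output fed into the $u$-equation. Your inspection of the three systems, together with the Lax--Milgram solvability of each uncoupled block, is exactly the justification the authors take for granted.
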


\subsection{Positivity of $u^n$}
In this subsection, we compare the positivity of the variable $u^n\in U_h$ in the four schemes. Here, we choose the space $V_h$  generated by $\mathbb{P}_2$-continuous FE. We recall that for the three schemes studied in this paper, namely schemes \textbf{UV}, \textbf{UZSW} and \textbf{US}, it is not clear the positivity of the variable $u^n$. Moreover, for the schemes \textbf{UV} and \textbf{US}, it was proved that $\Pi^h(u^n_{\varepsilon-})\rightarrow 0$ in $L^2(\Omega)$ as $\varepsilon\rightarrow 0$ (see Remarks \ref{NNuh} and \ref{NNuhs}); while for the scheme \textbf{UZSW} this fact is not clear. For this reason, in Figs. \ref{fig:MUuv}-\ref{fig:MUus} we compare the positivity of the variable $u^n_\varepsilon$ in the schemes, taking $\varepsilon=10^{-3}$, $\varepsilon=10^{-5}$ and $\varepsilon=10^{-8}$. In the scheme \textbf{UZSW} we fix $A=1$ (and thus, $F_\varepsilon(s)+A\geq 1$ for all $s\in \mathbb{R}$). We consider the time step $k=10^{-5}$, the tolerance parameter for the linear iterative methods $tol=10^{-4}$ and the initial conditions (see Fig.~\ref{fig:initcond1})
$$u_0\!\!=\!\!-10xy(2-x)(2-y)exp(-10(y-1)^2-10(x-1)^2)+10.0001,$$
$$v_0\!\!=\!\!100xy(2-x)(2-y)exp(-30(y-1)^2-30(x-1)^2)+0.0001.$$
\begin{figure}[htbp]
\centering 
\subfigure[Initial cell density $u_0$]{\includegraphics[width=65mm]{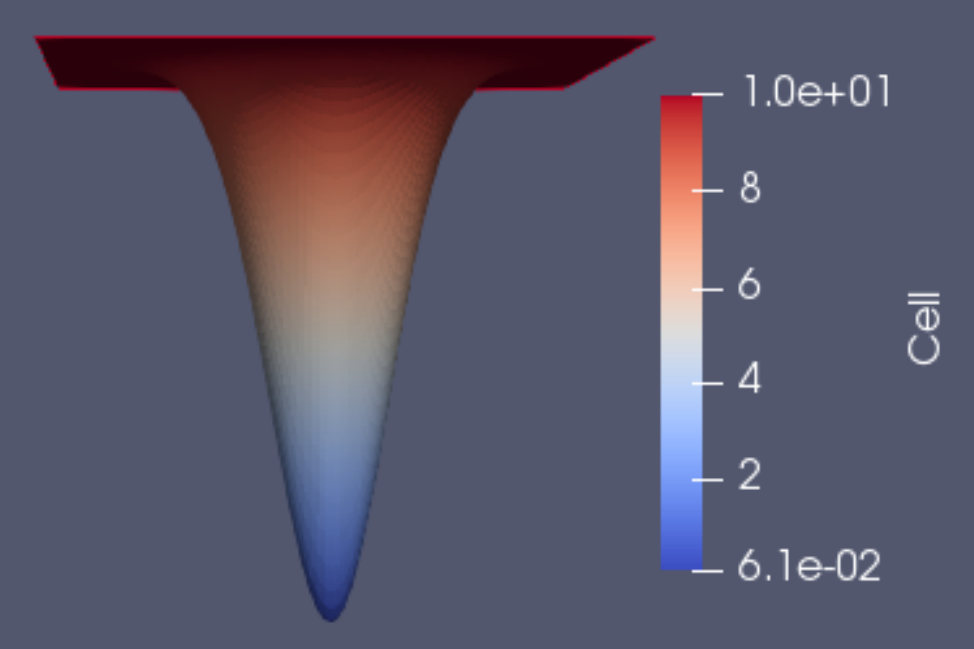}} \hspace{1,2 cm} 
\subfigure[Initial chemical concentration $v_0$]{\includegraphics[width=65mm]{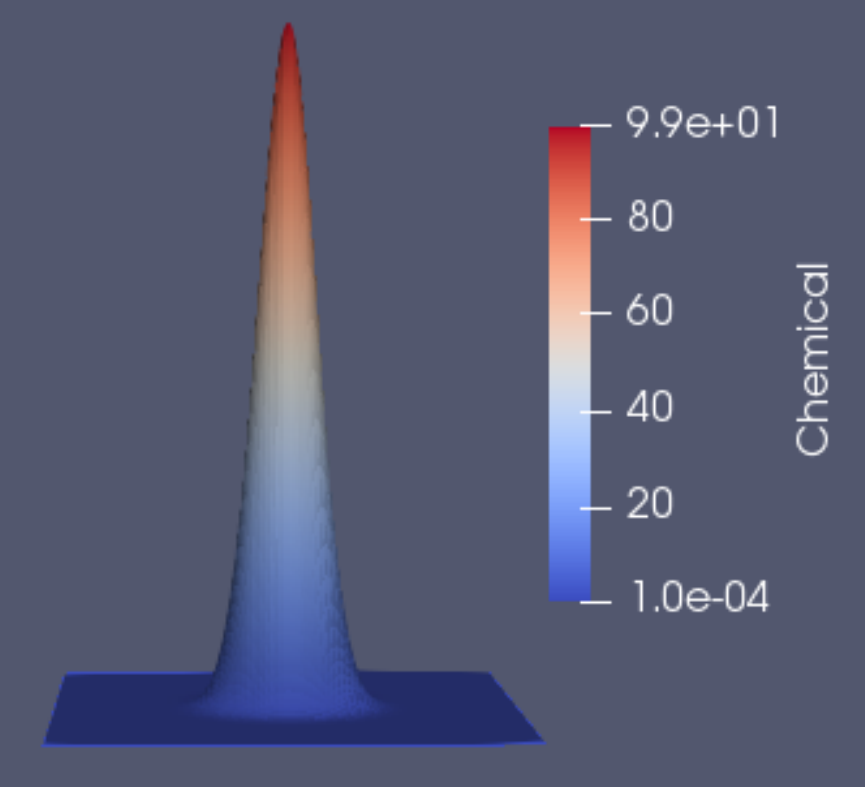}} 
\caption{Initial conditions.} \label{fig:initcond1}
\end{figure}
Note that $u_0,v_0>0$ in $\Omega$, $\min (u_0)=u_0(1,1)=0.0001$ and $\max (v_0)=v_0(1,1)=100.0001$. Moreover, for the schemes $\textbf{UV}$ and $\textbf{UZSW}$ we take the mesh size $h=\frac{1}{40}$, while for the scheme $\textbf{US}$ it was necessary to take $h=\frac{1}{80}$, because for thicker meshes we had convergence problems of the iterative method. \\
In the case of the schemes $\textbf{UV}$ and $\textbf{US}$, we observe that although $u^n_\varepsilon$ is ne\-ga\-tive for some $\x \in \Omega$ in some times $t_n>0$, when $\varepsilon\rightarrow 0$ these values are closer to $0$; while in the case of  the scheme $\textbf{UZSW}$, this same behavior is not observed (see Figs.~\ref{fig:MUuv}-\ref{fig:MUus}). Finally, in the case of the scheme $\textbf{BEUV}$ (see Fig. \ref{fig:MUbeuv}), we have also observed negative values for the minimum of $u^n$ in some times $t_n>0$, with more negative values than in the schemes $\textbf{UV}$ and $\textbf{US}$. 
 \begin{figure}[h]
  \begin{center}
 {\includegraphics[height=0.5\linewidth]{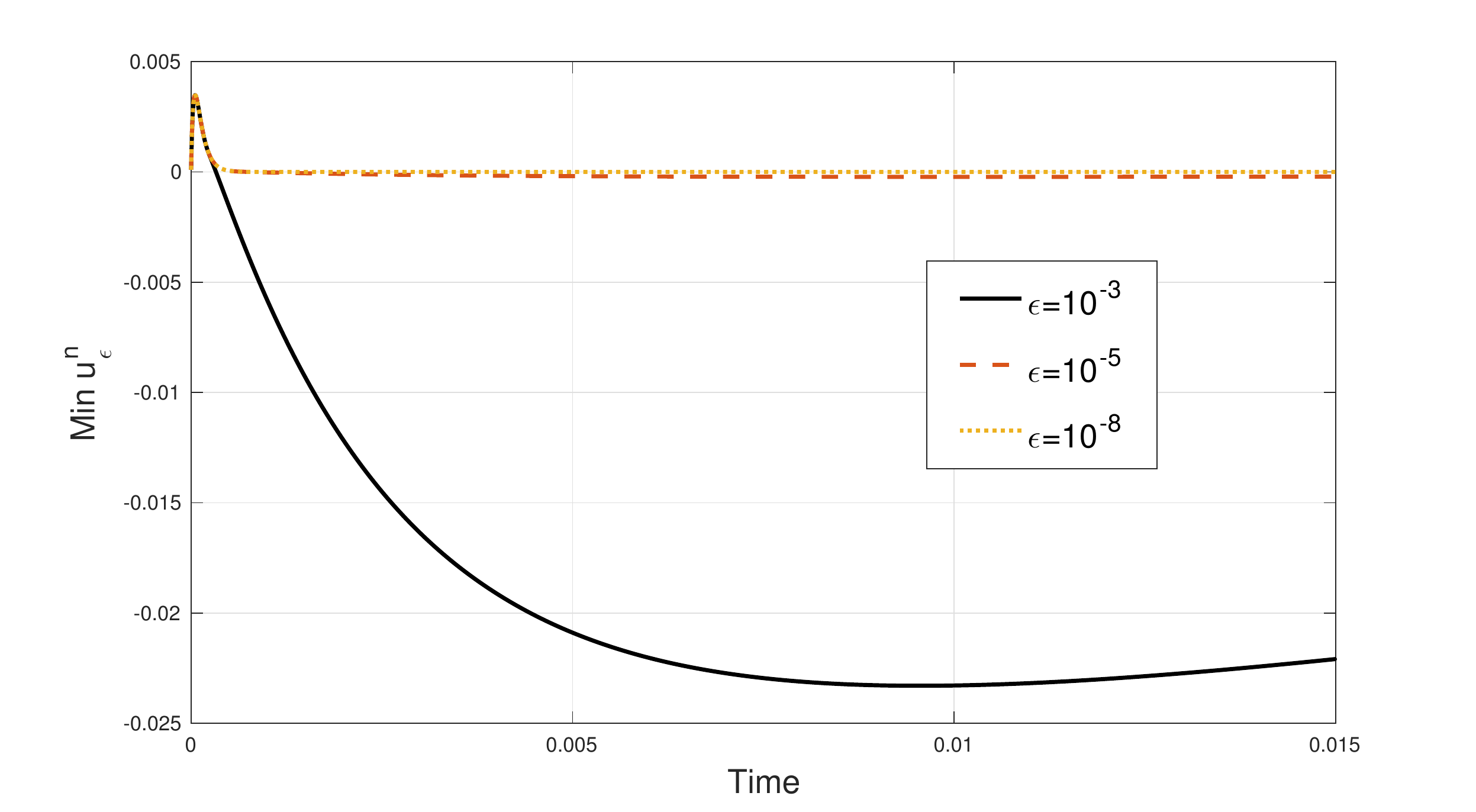}}
 \caption{Minimum values of $u^n_\varepsilon$ computed using the scheme \textbf{UV}.
  \label{fig:MUuv}}
  \end{center}
\end{figure}
 \begin{figure}[h]
  \begin{center}
  {\includegraphics[height=0.5\linewidth]{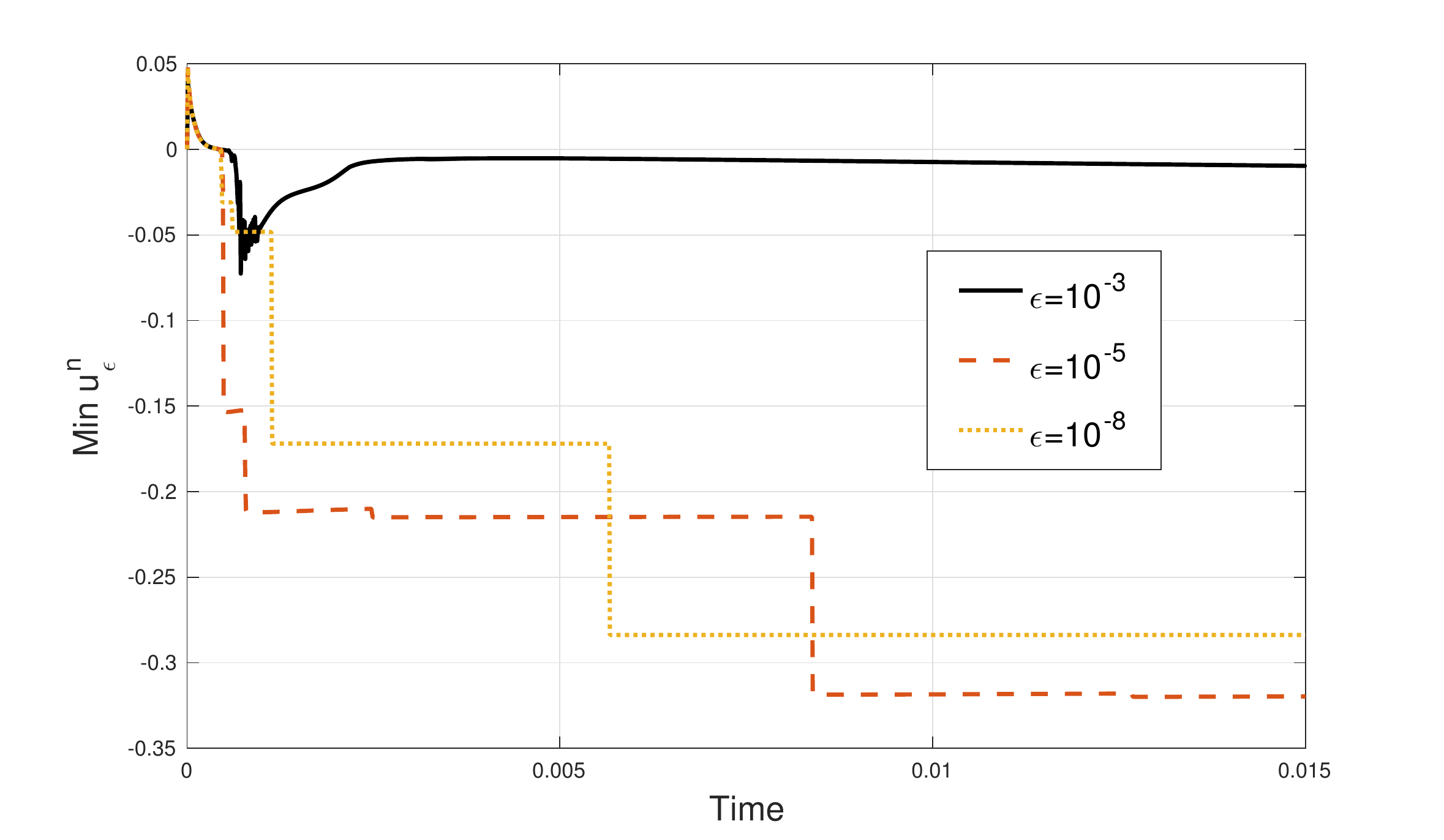}}
  \caption{Minimum values of $u^n_\varepsilon$ computed using the scheme \textbf{UZSW}.
  \label{fig:MUuzsw}}
  \end{center}
\end{figure}
 \begin{figure}[h]
  \begin{center}
  {\includegraphics[height=0.5\linewidth]{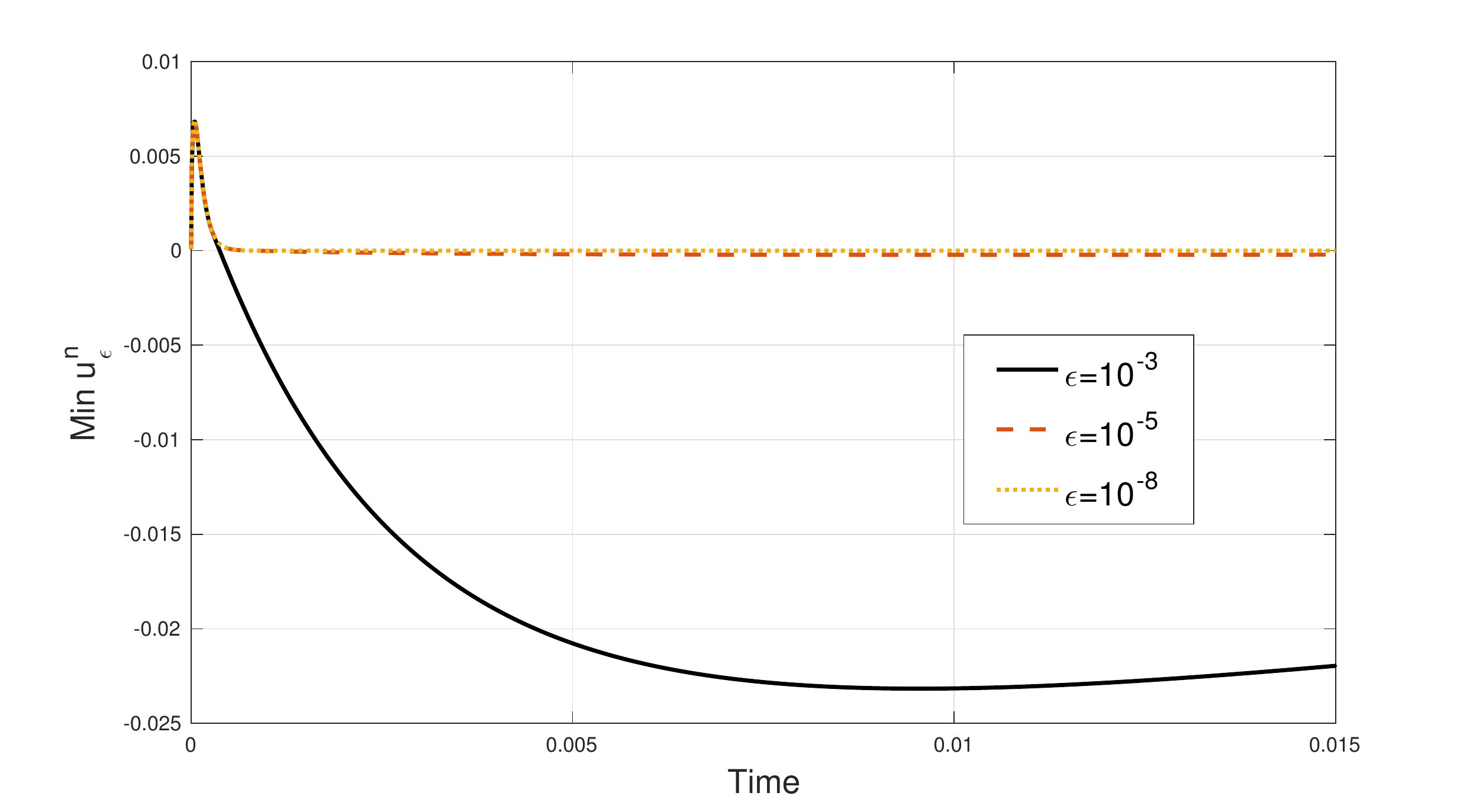}}
  \caption{Minimum values of $u^n_\varepsilon$ computed using the scheme \textbf{US}.
  \label{fig:MUus}}
  \end{center}
\end{figure}
\begin{obs}
In Figs.~\ref{fig:MUuv} and \ref{fig:MUus} there are also negative values of minimum of $u^n_\varepsilon$ for $\varepsilon=10^{-5}$ and $\varepsilon=10^{-8}$, but those are of order $10^{-4}$ and $10^{-7}$ respectively in both figures.
\end{obs}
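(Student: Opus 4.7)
The remark is an a posteriori report on the figures, so the plan is not to prove the exact numerical values but to give a theoretical upper bound that confirms that $\min_{\overline{\Omega}} u^n_\varepsilon$ must decay to zero as $\varepsilon\downarrow 0$ (both for scheme \textbf{UV} and scheme \textbf{US}), and to isolate the quantitative gap between the bound and the observation.

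The starting point is the uniform estimate (\ref{pneg1-a})$_1$ from Corollary \ref{welem}, namely $\|\Pi^h(u^n_{\varepsilon-})\|_0^2 \leq C_0\varepsilon$ for scheme \textbf{UV} (the same inequality holds for scheme \textbf{US} by Remark \ref{NNuhs}). First I would exploit the fact that $U_h$ is built from $\mathbb{P}_1$-continuous FE: on each simplex $u^n_\varepsilon$ is affine, hence its global minimum over $\overline{\Omega}$ is attained at some vertex $p_j$. Since $\Pi^h(u^n_{\varepsilon-})(p_j)=\min\{u^n_\varepsilon(p_j),0\}$, whenever the minimum is negative one has
$$\bigl|\min_{\overline{\Omega}} u^n_\varepsilon\bigr| \;=\; \|\Pi^h(u^n_{\varepsilon-})\|_{L^\infty}.$$

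Next I would apply the standard inverse inequality $\|v_h\|_{L^\infty}\leq C\,h^{-d/2}\|v_h\|_0$ valid on the shape-regular and quasi-uniform meshes used here (with $d=2$). Combined with the $L^2$ bound above, this yields
$$\bigl|\min_{\overline{\Omega}} u^n_\varepsilon\bigr| \;\leq\; C\,h^{-1}\sqrt{C_0\,\varepsilon},$$
which is independent of $n$ and $k$ and tends to zero as $\varepsilon\downarrow 0$ at fixed $h$. For the mesh sizes used in the simulations ($h=1/40$ in scheme \textbf{UV} and $h=1/80$ in scheme \textbf{US}), this recovers the qualitative picture of Figs. \ref{fig:MUuv} and \ref{fig:MUus}: the negative minimum of $u^n_\varepsilon$ is controlled by an $\varepsilon$-vanishing quantity, justifying the very small negative values reported.

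The main obstacle is purely quantitative: the bound predicts a negative minimum of size $O(h^{-1}\sqrt{\varepsilon})$, i.e.\ $\sim 10^{-1}$ for $\varepsilon=10^{-5}$ and $\sim 10^{-3}$ for $\varepsilon=10^{-8}$, whereas the experiments show values of order $10^{-4}$ and $10^{-7}$, suggesting a sharper linear-in-$\varepsilon$ decay. The energy-type argument used to prove (\ref{pneg1-a})$_1$ essentially provides only $L^2$ control of $\Pi^h(u^n_{\varepsilon-})$ of order $\sqrt{\varepsilon}$; closing the remaining gap would require an $L^\infty$-type estimate of the form $\|\Pi^h(u^n_{\varepsilon-})\|_{L^\infty}\leq C\varepsilon$, which is not accessible from the discrete energy law alone and would presumably need a discrete maximum-principle argument exploiting the structure of $\Lambda_\varepsilon$ and the right-angled mesh hypothesis (\textbf{H}). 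For the purposes of the present remark, however, the inequality above provides a theoretical ceiling that is consistent with the figures.
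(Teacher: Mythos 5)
This remark has no proof in the paper: it is a purely empirical statement reporting what the numerical experiments show in Figs.~\ref{fig:MUuv} and \ref{fig:MUus}, and its only substantiation is the simulation data itself. So there is no argument of the authors' to compare yours against; what you have written is a theoretical supplement rather than a reconstruction of anything in the text.

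As a supplement, your derivation is sound as far as it goes. The reduction of $\bigl|\min_{\overline{\Omega}} u^n_\varepsilon\bigr|$ to $\Vert \Pi^h(u^n_{\varepsilon-})\Vert_{L^\infty}$ is correct for $\mathbb{P}_1$ elements (the minimum of a piecewise affine function is attained at a vertex, where $\Pi^h(u^n_{\varepsilon-})$ coincides with $\min\{u^n_\varepsilon,0\}$), the inverse inequality on a quasi-uniform mesh is legitimate, and estimate (\ref{pneg1-a})$_1$ does transfer to the scheme \textbf{US} via Corollary \ref{welemUS} and Remark \ref{NNuhs}. But you should be clear that the resulting bound $C h^{-1}\sqrt{C_0\varepsilon}$ does not prove the statement: the remark asserts specific magnitudes ($10^{-4}$ for $\varepsilon=10^{-5}$ and $10^{-7}$ for $\varepsilon=10^{-8}$, i.e.\ an essentially linear scaling $\sim 10\,\varepsilon$), while your ceiling scales like $\sqrt{\varepsilon}$ and overshoots by three to four orders of magnitude, with unquantified constants on top. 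You identify this gap yourself, which is to your credit, but the conclusion to draw is that the observed decay rate is strictly stronger than anything the discrete energy law yields, and the remark remains an experimental observation rather than a consequence of the analysis. Your closing suggestion that a linear-in-$\varepsilon$ $L^\infty$ bound would require a discrete maximum-principle argument is a reasonable diagnosis of what would be needed to turn the observation into a theorem, but no such result is claimed or available in the paper.
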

 \begin{figure}[h]
  \begin{center}
  {\includegraphics[height=0.5\linewidth]{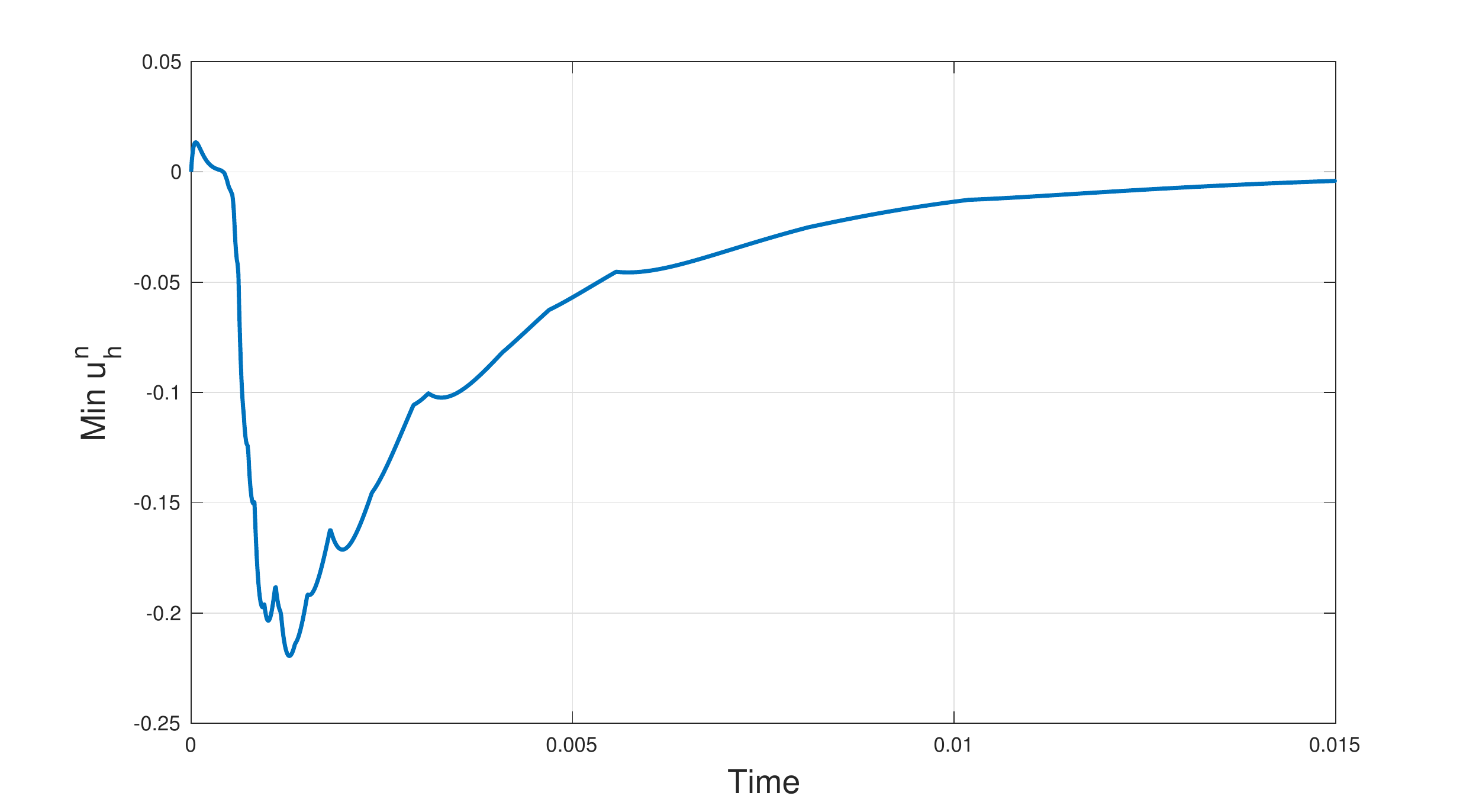}}
  \caption{Minimum values of $u^n$ computed using the scheme \textbf{BEUV}.
  \label{fig:MUbeuv}}
  \end{center}
\end{figure}

\subsection{Energy stability}
In this subsection, we compare numerically the stability of the schemes $\textbf{UV}$, $\textbf{UZSW}$, $\textbf{US}$ and $\textbf{BEUV}$ with respect to the ``exact'' energy
\begin{equation}\label{EComun}
 \mathcal{E}_e(u,v)= \displaystyle \int_\Omega  F_0(u(\x)) d\x + \frac{1}{2} \Vert \nabla {v}\Vert_0^2,
 \end{equation} 
where 
\begin{equation*}\label{F0} 
F_0(u) := F(u_+)=
\left\{\begin{array}{l}
1,  \ \ \mbox{ if } u\leq 0,\\
u ln(u) - u + 1, \ \ \mbox{ if } u>0.\\
\end{array}\right.
\end{equation*} 
We recall it was proved that the schemes $\textbf{UV}$, $\textbf{UZSW}$ and $\textbf{US}$ are unconditionally energy-stables with respect to modified energies obtained in terms of the variables of each scheme. Even more, some energy inequalities are satisfied (see Theorems \ref{estinc1uv}, \ref{estinc1us} and \ref{estinc1uzsw}). However, it is not clear how to prove the energy-stability of these schemes with respect to the ``exact'' energy $ \mathcal{E}_e(u,v)$ given in (\ref{EComun}), which comes from the continuous problem (\ref{modelf00}) (see (\ref{deluvintro})). Therefore, it is interesting to compare numerically the schemes with respect to this energy $\mathcal{E}_e(u^n,v^n)$, and to study the behaviour of the corresponding discrete residual of the energy law (\ref{deluvintro}): 
 \begin{equation}\label{ns01-bb}
RE_e(u^n,v^n):=\delta_t \mathcal{E}_e(u^n,v^n)+ 4\int_\Omega \vert \nabla \sqrt{u^{n}_+}\vert^2 d\x
+\Vert (A_h - I) v^{n}\Vert_{0}^{2}+\Vert \nabla v^{n}\Vert_{0}^{2}.
\end{equation}
1. {\it First test:} We consider $k=10^{-3}$, $h=\frac{1}{40}$, $tol=10^{-4}$ and the initial conditions (see Fig.~\ref{fig:initcond2})
$$u_0=7 w+7.0001 \ \ \mbox{ and } \ \  v_0=-7w +7.0001,$$ 
where $w:= cos(2\pi x) cos(2\pi y)$. We choose $V_h$ generated by $\mathbb{P}_2$-continuous FE. Then, we obtain that: 
\begin{figure}[htbp]
\centering 
\subfigure[Initial cell density $u_0$]{\includegraphics[width=70mm]{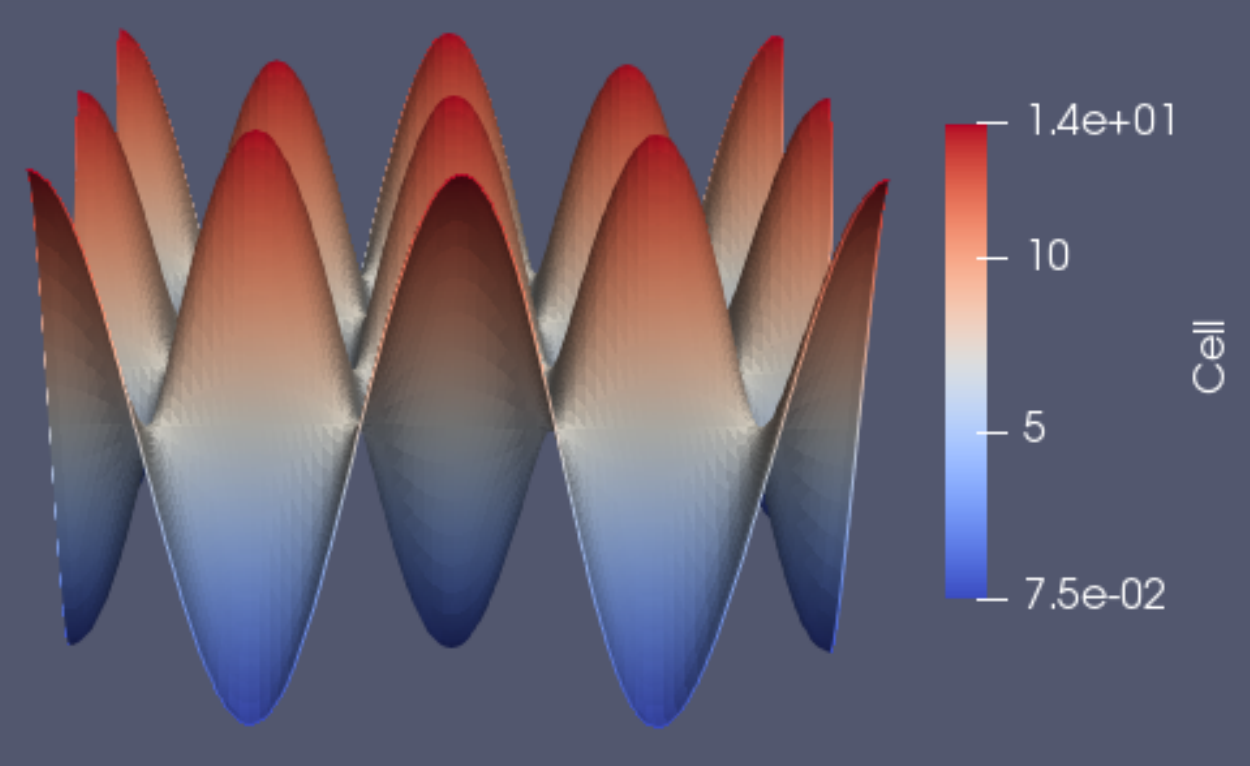}} \hspace{1,2 cm} 
\subfigure[Initial chemical concentration $v_0$]{\includegraphics[width=70mm]{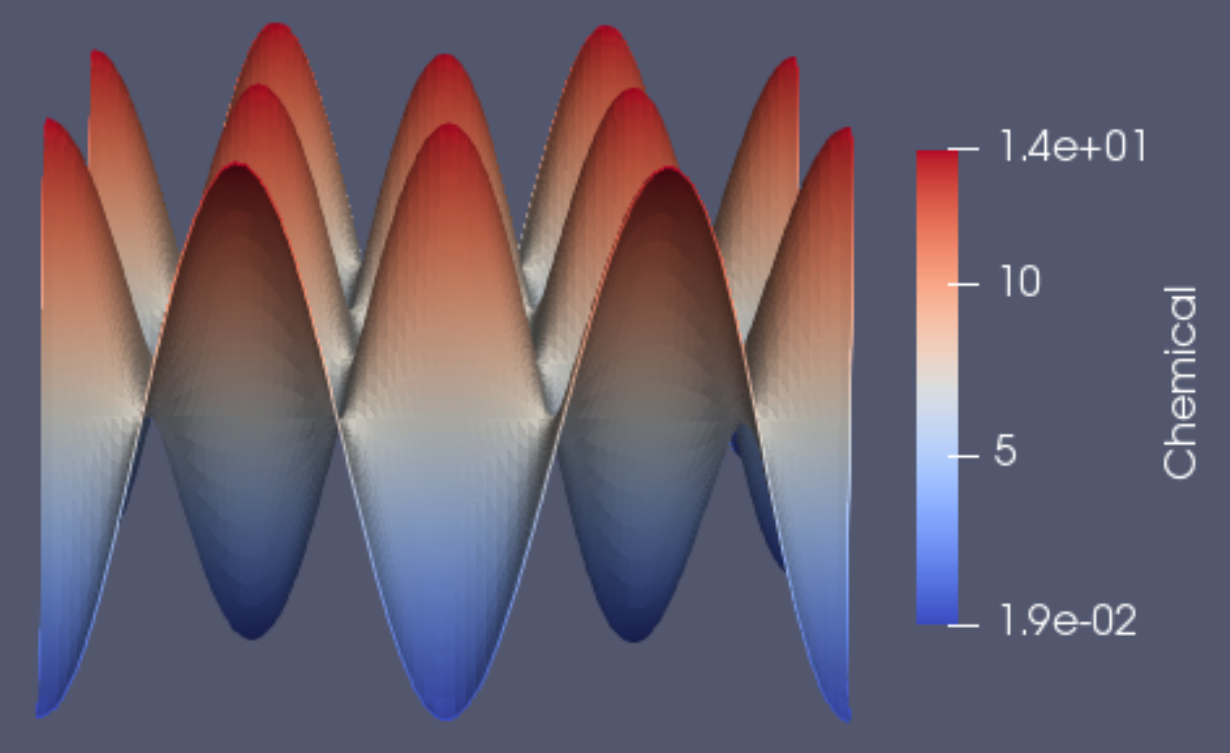}} 
\caption{Initial conditions.} \label{fig:initcond2}
\end{figure}
\begin{enumerate}
\item[(i)] The scheme $\textbf{BEUV}$ satisfies the energy decreasing in time property 
 for the exact energy $\mathcal{E}_e(u,v)$, that is, 
 \begin{equation}\label{DTPa}
 \mathcal{E}_e(u^n,v^n)\le \mathcal{E}_e(u^{n-1},v^{n-1}) \ \ \forall n.
 \end{equation}
Its behaviour can be observed in Fig.~\ref{fig:ENuvBEUV}. The same behaviour is obtained for the schemes $\textbf{UV}$ and $\textbf{US}$ independently of the choice of $\varepsilon$. In the case of the scheme \textbf{UZSW}, this property (\ref{DTPa}) is not satisfied for any value of $\varepsilon$. Indeed, increasing energies are obtained for different values of $\varepsilon$ (see Fig.~\ref{fig:ENuvUZSW}).
\item[(ii)] The scheme $\textbf{BEUV}$ satisfies the discrete energy inequality $RE_e(u^n,v^n)\leq 0$ for $RE_e(u^n,v^n)$ defined in (\ref{ns01-bb}) (see Fig. \ref{fig:REuvBEUV}). The same is observed for the schemes $\textbf{UV}$ and $\textbf{US}$ independently of the choice of $\varepsilon$. In the case of the scheme \textbf{UZSW}, it is observed that this discrete energy inequality is not satisfied for any value of $\varepsilon$. Indeed, the residual $RE_e(u^n_\varepsilon,v^n_\varepsilon)$ obtained for each $\varepsilon$ reaches very large positive values (see Fig. \ref{fig:REuvUZSW}).
\end{enumerate}
 \begin{figure}[h]
  \begin{center}
  {\includegraphics[height=0.5\linewidth]{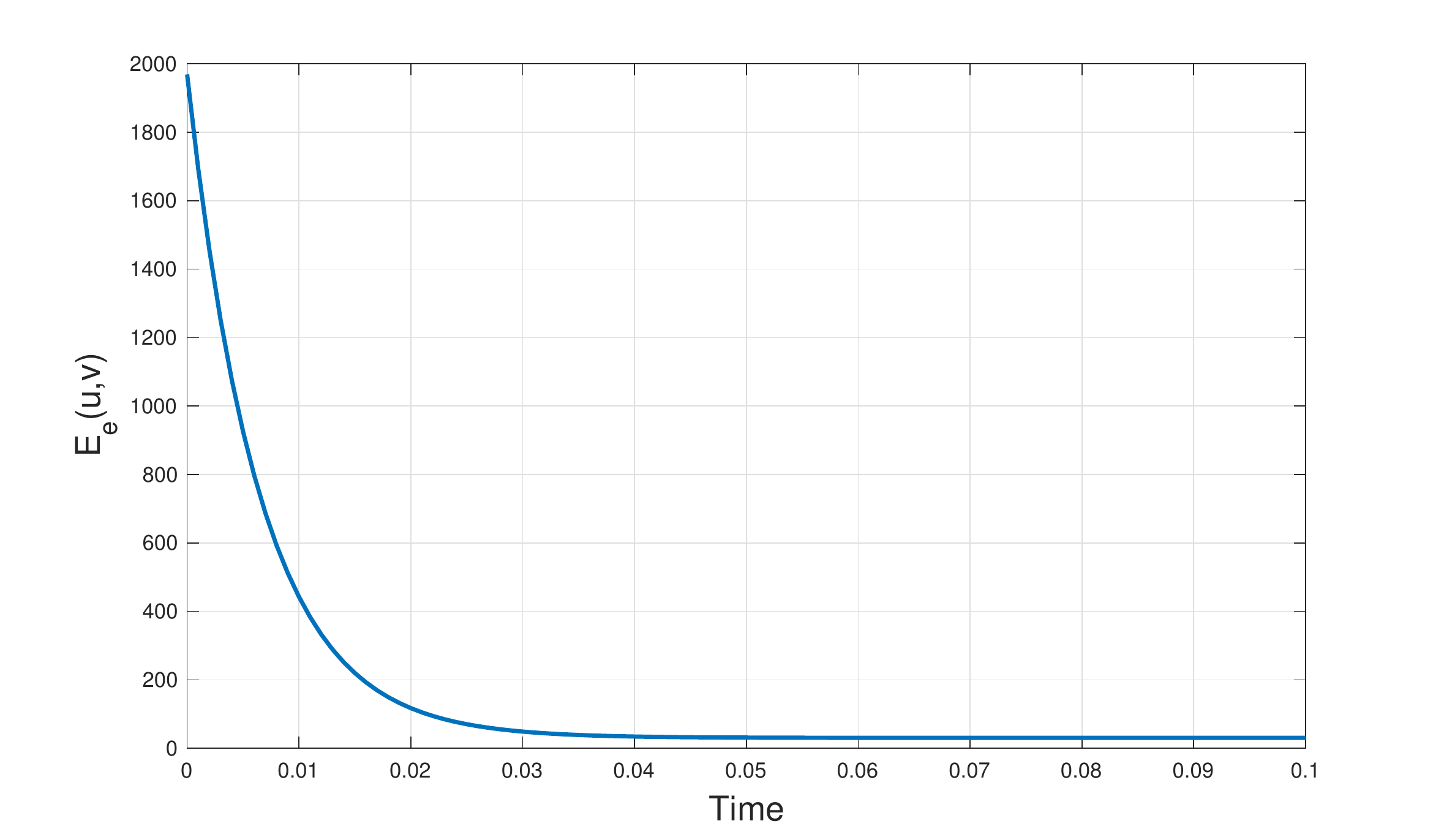}}
  \caption{Energy $\mathcal{E}_e(u^n,v^n)$ of the scheme $\textbf{BEUV}$.
  \label{fig:ENuvBEUV}}
  \end{center}
\end{figure}
 \begin{figure}[h]
  \begin{center}
  {\includegraphics[height=0.5\linewidth]{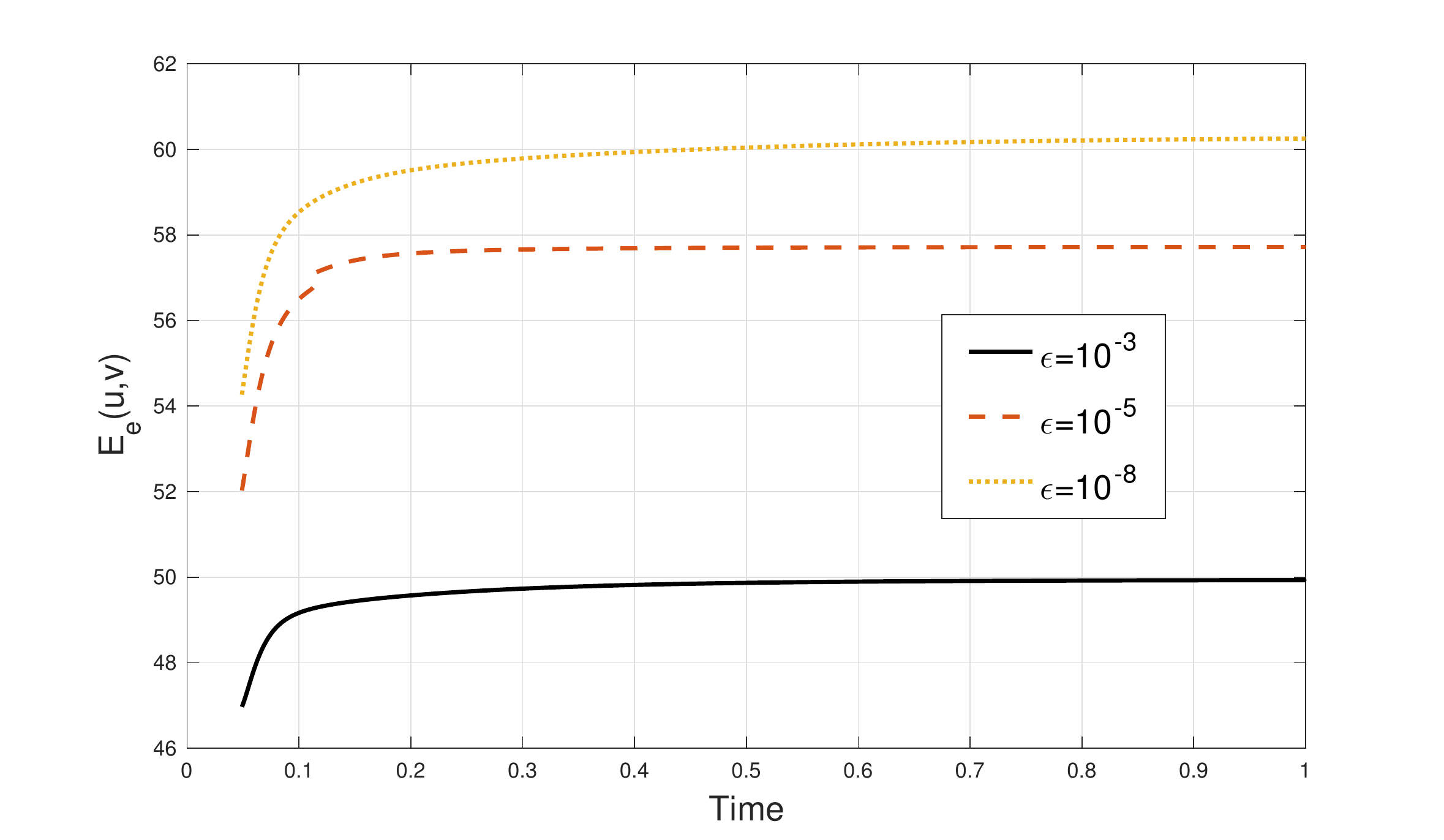}}
  \caption{Energy $\mathcal{E}_e(u^n_\varepsilon,v^n_\varepsilon)$ of the scheme $\textbf{UZSW}$ for different values of $\varepsilon$.
  \label{fig:ENuvUZSW}}
  \end{center}
\end{figure}
 \begin{figure}[h]
  \begin{center}
  {\includegraphics[height=0.5\linewidth]{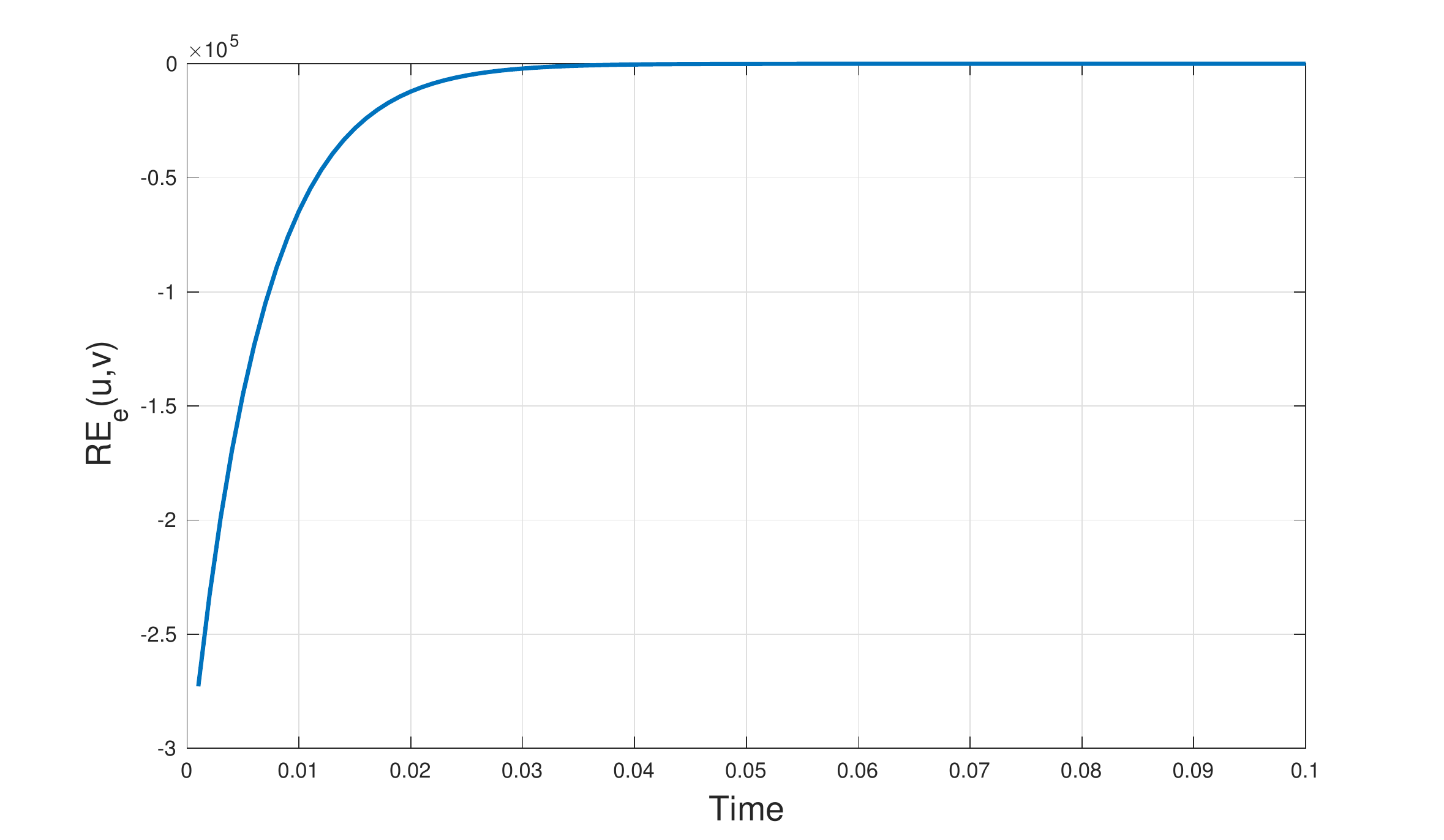}}
  \caption{$RE_e(u^n,v^n)$ of the scheme $\textbf{BEUV}$ (with approximation $\mathbb{P}_2$-continuous for $V_h$).
  \label{fig:REuvBEUV}}
  \end{center}
\end{figure}
 \begin{figure}[h]
  \begin{center}
  {\includegraphics[height=0.5\linewidth]{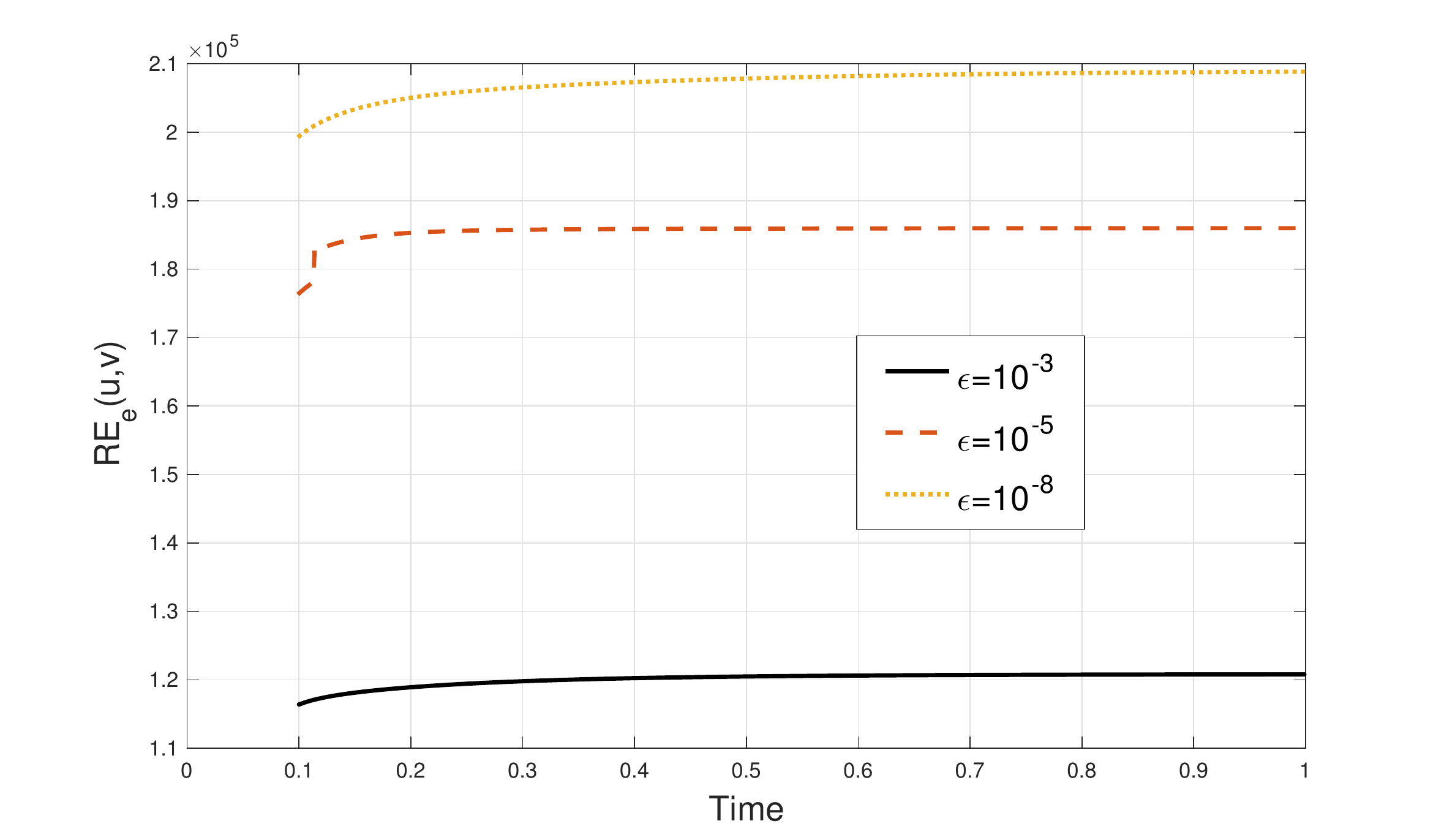}}
  \caption{$RE_e(u^n_\varepsilon,v^n_\varepsilon)$ of the scheme \textbf{UZSW} for different values of $\varepsilon$.
  \label{fig:REuvUZSW}}
  \end{center}
\end{figure}

2. {\it Second test:} We consider $k=10^{-5}$, $h=\frac{1}{20}$, $tol=10^{-4}$ and the initial conditions 
$$u_0=14w+14.0001\ \ \mbox{ and } \ \ v_0=-14w +14.0001,$$
with the function $w$ as before. Now, we choose the space $V_h$  generated by $\mathbb{P}_1$-continuous FE. Then, we obtain that: 
\begin{enumerate}
\item[(i)] The schemes $\textbf{BEUV}$, $\textbf{UV}$ and $\textbf{US}$ satisfy the energy decreasing in time property (\ref{DTPa}), independently of the choice of $\varepsilon$. 
\item[(ii)] The schemes $\textbf{UV}$ and $\textbf{US}$ satisfy the discrete energy inequality $RE_e(u^n_\varepsilon,v^n_\varepsilon)\leq 0$, independently of the choice of $\varepsilon$; while the scheme $\textbf{BEUV}$ have $RE(u^n,v^n)>0$ for some $n\geq 0$ (see Fig. \ref{fig:REuvUV2}).
\end{enumerate}
 \begin{figure}[h]
  \begin{center}
  {\includegraphics[height=0.5\linewidth]{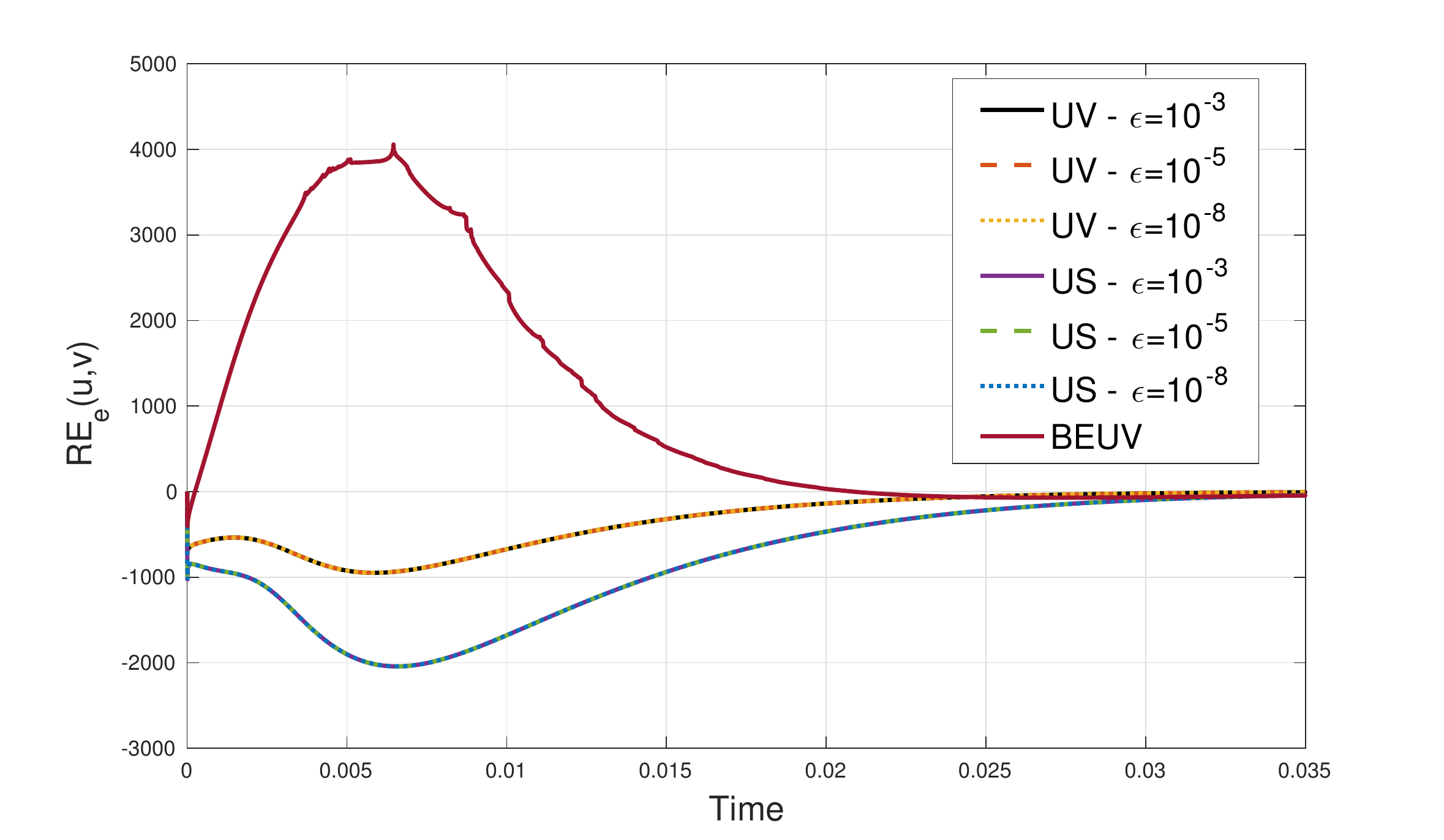}}
  \caption{$RE_e(u^n,v^n)$ of the schemes $\textbf{BEUV}$, \textbf{UV}, and \textbf{US} (with approximation $\mathbb{P}_1$-continuous for $V_h$). On the bottom, scheme \textbf{US} (for $\varepsilon=10^{-3},10^{-5},10^{-8}$); in the middle, scheme \textbf{UV} (for $\varepsilon=10^{-3},10^{-5},10^{-8}$); and on the top, scheme \textbf{BEUV}.
  \label{fig:REuvUV2}}
  \end{center}
\end{figure}

\section{Conclusions}
In this paper we have developed three new mass-conservative and unconditionally energy-stable fully discrete FE schemes for the chemorepulsion production model (\ref{modelf00}), namely \textbf{UV}, \textbf{US} and \textbf{UZSW}.
From the theoretical point of view we have obtained: 
\begin{enumerate}
\item[(i)] The well-posedness of the nume\-ri\-cal schemes (with conditional uniqueness for the nonlinear schemes \textbf{UV} and \textbf{US}).
\item[(ii)] The nonlinear scheme \textbf{UV} is unconditional energy-stable with respect to the energy $\mathcal{E}_\varepsilon^h(u,v)$ given in (\ref{Euv}), under the constraint ({\bf H}) on the space triangulation related with the right-angles and assuming that $U_h$ is approximated by $\mathbb{P}_1$-continuous FE. 
\item[(iii)] The nonlinear scheme \textbf{US} and the linear scheme \textbf{UZSW} are unconditional energy-stables with respect to the modified energies $\mathcal{E}^h_\varepsilon(u,{\boldsymbol\sigma})$ (given in (\ref{Eus})) and $\mathcal{E}(w,{\boldsymbol\sigma})$ (given in (\ref{Euzsw})) respectively, without the constraint on the triangulation related with the right-angles simplices and assuming that $U_h$ can be approximated by $\mathbb{P}_1$-continuous and $\mathbb{P}_k$-continuous FE respectively, for any $k\geq 1$. 
\item[(iv)] It is not clear how to prove the energy-stability of the nonlinear scheme \textbf{BEUV} with respect to the energy $\mathcal{E}_e(u,v)$ (given in (\ref{EComun})) or some modified energy (see Remark \ref{RBE}).
\item[(v)] In the schemes \textbf{UV} and \textbf{US} there is a control for $\Pi^h (u^n_{\varepsilon-})$ in $L^2$-norm, which tends to $0$ as $\varepsilon\rightarrow 0$. This allows to conclude the nonnegativity of the solution $u^n_\varepsilon$ in the limit when $\varepsilon\rightarrow 0$. This property is not clear for the linear scheme \textbf{UZSW}.
\end{enumerate}
On the other hand, from the numerical simulations, we can conclude:
\begin{enumerate}
\item[(i)] There are initial conditions for which the scheme \textbf{UZSW} is not energy stable with respect to the energy $\mathcal{E}_e(u,v)$, that is, the decreasing in time property (\ref{DTPa}) is not satisfied for any value of $\varepsilon$. Indeed, time increasing energies are obtained for different values of $\varepsilon$.
\item[(ii)] For the three compared nonlinear schemes (\textbf{UV}, \textbf{US} and \textbf{BEUV}), only the scheme \textbf{US} has convergence problems for the linear iterative method. However, these problems are overcomed considering thinner meshes. 
\item[(iii)] The schemes \textbf{UV} and \textbf{US} have decreasing in time energy $\mathcal{E}_e(u,v)$, independently of the choice of $\varepsilon$. In fact, the discrete energy inequality $RE_e(u^n_\varepsilon,v^n_\varepsilon)\leq 0$ is satisfied in all cases, for $RE_e(u^n_\varepsilon,v^n_\varepsilon)$ defined in (\ref{ns01-bb}).   
\item[(iv)] The scheme \textbf{BEUV} has decreasing in time energy $\mathcal{E}_e(u,v)$,  but the discrete energy inequa\-li\-ty  $RE_e(u^n,v^n)\leq 0$ is not satisfied for some $n\geq 0$.
\item[(v)] Finally, it was observed numerically that, for the schemes \textbf{UV} and \textbf{US}, $u^n_{\varepsilon-} \rightarrow 0$ as $\varepsilon\rightarrow 0$; while for the scheme \textbf{UZSW} this behavior was not observed.
\end{enumerate}

\end{document}